\newcommand{\Qq}{\mathbb{Q}} 
\newcommand{\Zz}{\mathbb{Z}} 
\newcommand{\Rr}{\mathbb{R}} 
\newcommand{\cc}{{\mathbb{C}}}    
\newcommand{\qq}{{\mathbb{Q}}}  
\newcommand{\zz}{{\mathbb{Z}}}  
\numberwithin{equation}{section}
\theoremstyle{plain}
\newtheorem{theorem}{Theorem}[section]
\newtheorem{lemma}[theorem]{Lemma}
\newtheorem{proposition}[theorem]{Proposition}
\newtheorem{question}[theorem]{Question}
\newtheorem{corollary}[theorem]{Corollary}}
\theoremstyle{remark}
\newtheorem{remark}[theorem]{Remark}
\newtheorem{definition}[theorem]{Definition}
\newtheorem{example}[theorem]{Example}}
\title{Density results for specialization sets of Galois covers}
\author{Joachim K\"onig}
\email{jkoenig@kaist.ac.kr}
\author{Fran\c cois Legrand}
\email{francois.legrand@tu-dresden.de}
\address{Department of Mathematical Sciences, KAIST, 291 Daehak-ro Yuseong-gu Daejeon 34141, South Korea}
\address{Institut f\"ur Algebra, Fachrichtung Mathematik, TU Dresden, 01062 Dresden, Germany}
\date{\today}
\begin{document}

\maketitle

\begin{abstract}
We provide evidence for this conclusion: given a finite Galois cover $f: X \rightarrow \mathbb{P}^1_\Qq$ of group $G$, almost all (in a density sense) realizations of $G$ over $\Qq$ do not occur as spe\-ci\-a\-lizations of $f$. We show that this holds if the number of branch points of $f$ is sufficiently large, under the abc-conjecture and, possibly, the lower bound predicted by the Malle conjecture for the number of Galois extensions of $\Qq$ of given group and bounded discriminant. This widely extends a result of Granville on the lack of $\Qq$-rational points on quadratic twists of hyperelliptic curves over $\Qq$ with large genus, under the abc-conjecture (a diophantine reformulation of the case $G=\Zz/2\Zz$ of our result). As a further evidence, we exhibit a few finite groups $G$ for which the above conclusion holds unconditionally for almost all covers of $\mathbb{P}^1_\Qq$ of group $G$. We also introduce a local-global principle for specializations of Galois covers $f: X \rightarrow \mathbb{P}^1_\Qq$ and show that it often fails if $f$ has abelian Galois group and sufficiently many branch points, under the abc-conjecture. On the one hand, such a local-global conclusion underscores the ``smallness" of the specialization set of a Galois cover of $\mathbb{P}^1_\Qq$. On the other hand, it allows to generate conditionally ``many" curves over $\Qq$ failing the Hasse principle, thus generalizing a recent result of Clark and Watson devoted to the hyperelliptic case.
\end{abstract}

\section{Introduction} \label{sec:intro}

Given a finite Galois extension $E$ of the rational function field $\Qq(T)$, and a point $t_0 \in \mathbb{P}^1(\Qq)$, there is a well-known notion of {\it{specialization}} $E_{t_0}/\Qq$ (see \S\ref{sssec:basics_1.1} for more details). If $E$ is the splitting field of a monic separable polynomial $P(T,Y) \in \Qq[T][Y]$ and $t_0 \in \Qq$ is such that $P(t_0,Y)$ is separable, then the field $E_{t_0}$ is the splitting field over $\Qq$ of $P(t_0,Y)$.

The specialization process has been much studied towards the {\it{inverse Galois problem}}, which asks whether every finite group $G$ occurs as the Galois group of a finite Galois extension $F/\Qq$. In that case, we shall say that such an extension $F/\Qq$ is a {\it{$G$-extension}}. Indeed, if $E/\Qq(T)$ is a finite Galois extension with Galois group $G$, then {\it{Hilbert's irreducibility theorem}} asserts that the specialization $E_{t_0}/\Qq$ still has Galois group $G$ for infinitely many $t_0 \in \Qq$. Moreover, if $E/\Qq(T)$ is {\it{$\Qq$-regular}} (i.e., if $\Qq$ is algebraically closed in $E$), in which case we shall say that $E/\Qq(T)$ is a {\it{regular $G$-extension}}, and if $G \not= \{1\}$, then infinitely many linearly disjoint $G$-extensions of $\Qq$ occur as specializations of $E/\Qq(T)$. In fact, most known realizations over $\Qq$ of finite non-abelian simple groups $G$ have been obtained by specializing regular $G$-extensions of $\Qq(T)$, generally derived from the {\it{rigidity method}}. See the books \cite{Ser92, Vol96,FJ08, MM18} for more details and references within.

Recent progress has been made on the set ${\rm{Sp}}(E)$ of all specializations of a given regular $G$-extension $E/\Qq(T)$. For example, for many groups $G$, no regular $G$-extension $E/\Qq(T)$ is {\it{pa\-rametric}}, i.e., ${\rm{Sp}}(E)$ does not contain all $G$-extensions of $\Qq$ (see \cite{KL18} and \cite[\S7]{KLN19}). Another result by D\`ebes \cite{Deb17} gives a lower bound for the number of $G$-extensions of $\Qq$ with bounded discriminant lying in the set ${\rm{Sp}}(E)$ for a given regular $G$-extension $E/\Qq(T)$. An even more fundamental question was raised in \cite{Deb18,DKLN18}: does the set ${\rm{Sp}}(E)$, a collection of arithmetic objects, characterize the extension $E/\Qq(T)$, a geometric one?

\subsection{A central question} \label{ssec:intro_0}

Given a regular $G$-extension $E/\Qq(T)$, the main purpose of this paper is to further study the set ${\rm{Sp}}(E)$ and to provide evidence for this striking conclusion: this set is in general ``small", i.e., ``almost all" $G$-extensions of $\Qq$ do not lie in the set ${\rm{Sp}}(E)$.

Let us make this more precise. Given an integer $x \geq 1$, let $\mathcal{S}(G,x)$ denote the set of all $G$-extensions $F/\Qq$ such that $|d_F| \leq x$, where $d_F$ denotes the absolute discriminant of $F$. By Hermite's theorem, the set $\mathcal{S}(G,x)$ is finite. Moreover, say that the set ${\rm{Sp}}(E)$ of all specializations of a given regular $G$-extension $E/\Qq(T)$ is {\it{of density zero}} if the equality $|{\rm{Sp}}(E) \cap {\mathcal{S}}(G,x)| = o(|{\mathcal{S}}(G,x)|)$ holds as $x$ tends to $\infty$.

\begin{question} \label{ques:density0}
Let $G$ be a finite group. Is it true that the specialization set ${\rm{Sp}}(E)$ of a given regular $G$-extension $E/\Qq(T)$, not in some ``small" exceptional list, is of density zero?
\end{question}

The reason why we have to consider an exceptional list in Question \ref{ques:density0} is that, for some regular $G$-extensions $E/\Qq(T)$, the specialization set ${\rm{Sp}}(E)$ is not of density zero. For example, this happens for all parametric extensions $E/\Qq(T)$, in which case a fully opposite conclusion holds. However, all extensions which are known to satisfy this property are in fact {\it{generic}} (that is, remain parametric after every base change) and, in particular, are all of genus 0 and belong to a very short list (see \cite[Theorem 1.6]{DKLN18} for more details).

In addition to the generic extensions $E/\Qq(T)$, there are some more counterexamples in genus 1. For instance, results of Vatsal \cite{Vat98}, Byeon \cite{Bye04}, and later Byeon-Jeon-Kim \cite{BJK09} about rank $1$ quadratic twists of elliptic curves yield infinite families of separable degree 3 polynomials $P(T) \in \Zz[T]$ such that a positive proportion of all quadratic extensions of $\Qq$ occur as specializations of the extension $\Qq(T)(\sqrt{P(T)})/\Qq(T)$. More generally, under Goldfeld's conjecture, 50$\%$ of all quadratic extensions of $\Qq$ are expected to be reached by specializing the function field extension corresponding to an elliptic curve over $\Qq$.

However, we are not aware of any counterexample in genus at least 2, and we in fact expect the answer to Question \ref{ques:density0} to be ``Yes" if regular $G$-extensions of $\Qq(T)$ of genus at most 1, which are quite rare and do not even exist for many finite groups $G$ (e.g., for all finite non-solvable groups), are left aside. Evidence for this is provided by \cite{DKLN18}, which proves an analog over the rational function field $\cc(Y)$,
with the notion of specialization replaced by a geometric analog of ``rational pullback" and the notion of density also replaced by a geometric analog via the Zariski topology.

In this paper, we make progress on Question \ref{ques:density0} in several directions. Firstly, in  \S\ref{sec:densityI}, we show that the answer is affirmative if one excludes regular $G$-extensions of $\Qq(T)$ with very few branch points, conditionally on widely accepted conjectures (see \S\ref{ssec:intro1}). Secondly, in \S\ref{sec:densityII}, we show for some exemplary small finite groups $G$ that, upon ignoring a ``small" (in a density sense) set of regular $G$-extensions of $\Qq(T)$, the answer to Question \ref{ques:density0} is positive unconditionally (see \S\ref{ssec:intro_3}). In this latter context, we do not have any restriction on the number of branch points or the genus, thus suggesting that the density zero conclusion, which we expect to hold always in genus at least 2, may also hold for ``many" regular $G$-extensions of $\Qq(T)$ of genus at most 1. For example, it is plausible that this conclusion holds for all genus 0 extensions which are not parametric.

\subsection{Conditional results} \label{ssec:intro1}

We first give an upper bound for the number of specializations of a given regular $G$-extension of $\Qq(T)$ with bounded discriminant, under the abc-conjecture:

\vspace{2mm}

\noindent
{\bf{The abc-conjecture.}} {\it{For every $\epsilon>0$, there exists a positive constant $K(\epsilon)$ such that, for all coprime integers $a$, $b$, and $c$ fulfilling $a+b=c$, the following holds:
$$c \leq K(\epsilon) \cdot {\rm{rad}}(abc)^{1+\epsilon},$$
where the radical ${\rm{rad}}(n)$ of an integer $n \geq 1$ is the product of the distinct prime factors of $n$.}}

\begin{theorem} \label{thm:intro_1}
Let $G$ be a finite group and $E/\Qq(T)$ a regular $G$-extension with $r \geq 5$ branch points. Suppose the abc-conjecture holds. Then there is a ``small" constant $e > 0$, depending only on $r$, $|G|$, and the ramification indices of the branch points of $E/\Qq(T)$, such that the following holds. For every $\epsilon >0$ and every sufficiently large integer $x$, one has
\begin{equation} \label{eq:intro_1}
|{\rm{Sp}}(E) \cap \mathcal{S}(G,x)| \leq x^{e+ \epsilon}.
\end{equation}
\end{theorem}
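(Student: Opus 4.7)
The plan is, under the abc-conjecture, to bound $|d_{E_{t_0}}|$ from below in terms of the Weil height $H(t_0)$ of the specialization point, and then to count rational points of bounded height. Write $t_0 = a/b \in \Qq$ in lowest terms, and let $t_1, \ldots, t_r$ (with ramification indices $e_1, \ldots, e_r$) be the branch points of $E/\Qq(T)$.

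\smallskip
\noindent\textbf{Discriminant lower bound via ramification.} The first step relies on a standard ramification principle (due to Beckmann, with refinements by D\`ebes and others): there is a finite set $S_0$ of primes, depending only on $E$, such that for $p \notin S_0$ and $t_0 \in \Qq$ not a branch point, $p$ ramifies in $E_{t_0}/\Qq$ if and only if $e_i \nmid v_p(a - t_i b)$ for some index $i$ (with the obvious modification if $t_i = \infty$ or $t_i \notin \Qq$); moreover, in that case $v_p(|d_{E_{t_0}}|) \geq |G|(1 - 1/e_i)$ by tameness of the inertia. Writing $B_i$ for the $e_i$-th-power-free part of $a - t_i b$, i.e., the product $\prod_{p \notin S_0} p^{v_p(a - t_i b) \bmod e_i}$, one obtains
\[
|d_{E_{t_0}}| \;\geq\; c_0 \cdot \prod_{i=1}^r \mathrm{rad}(B_i)^{|G|(1-1/e_i)},
\]
using that primes simultaneously dividing two distinct $B_i$'s must lie in a finite set depending only on the $t_i$'s.

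\smallskip
\noindent\textbf{Height lower bound via abc.} The analytic input is an abc-consequence in the style of Granville and Elkies: for every $\epsilon > 0$ and every $t_0 \in \Qq$ outside a set of bounded height,
\[
\prod_{i=1}^r \mathrm{rad}(B_i) \;\geq\; c_1 \cdot H(t_0)^{\sum_{i=1}^r (1 - 1/e_i) \,-\, 2 \,-\, \epsilon}.
\]
When all $e_i = 2$, this reduces to the classical statement that the squarefree part of $\prod_i (a - t_i b)$ is at least $H(t_0)^{r - 2 - \epsilon}$, which underlies Granville's hyperelliptic result cited in the abstract. The exponent is strictly positive precisely when $\sum_i (1 - 1/e_i) > 2$; since $e_i \geq 2$, this is guaranteed by $r \geq 5$, which is the role of the hypothesis.

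\smallskip
\noindent\textbf{Combining and counting.} Together, the two displays give $|d_{E_{t_0}}| \geq c \cdot H(t_0)^{\gamma - \epsilon}$ for an explicit $\gamma > 0$ depending only on $r$, $|G|$, and the $e_i$'s. Hence $|d_{E_{t_0}}| \leq x$ forces $H(t_0) \leq c' x^{1/\gamma + \epsilon}$, and since $\#\{t_0 \in \Qq : H(t_0) \leq Y\} = O(Y^2)$, the number of specialization points producing an extension in $\mathcal{S}(G,x)$ is at most $x^{2/\gamma + \epsilon}$. Taking $e := 2/\gamma$ (and absorbing constants into $\epsilon$) yields the asserted bound.

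\smallskip
\noindent\textbf{Main obstacle.} The hard step is the abc-type lower bound on $\prod_i \mathrm{rad}(B_i)$. When $t_0$ is ``$e_i$-th-power-close'' to some $t_i$, i.e., $a - t_i b$ carries a large $e_i$-th-power divisor, the quantity $\mathrm{rad}(B_i)$ is much smaller than $|a - t_i b|^{1 - 1/e_i}$ and the naive radical-of-product bound degrades. I would handle this by stratifying specialization points according to the power-profile of the vector $(a - t_i b)_{i=1}^r$ and applying a suitable abc-variant on each stratum, then summing the resulting counts. A secondary technical issue is that the $t_i$ typically lie in a number field $K \supseteq \Qq$; this can be addressed either by using abc over $K$ (a known consequence of abc over $\Qq$) or by a Galois-descent argument on the set of branch points.
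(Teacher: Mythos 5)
Your overall architecture is the same as the paper's: a discriminant lower bound via Beckmann's theorem, an Elkies--Langevin--Granville consequence of abc to convert it into a lower bound in terms of $\max\{|a|,|b|\}$, and then a count of rationals of bounded height (your final counting step is identical to the paper's). However, the step you yourself flag as the ``main obstacle'' is a genuine gap, and the inequality you assert there is neither the cited abc-consequence nor correct as stated. What abc gives (Granville/Elkies/Langevin) is a lower bound $\mathrm{rad}\bigl(\prod_i(a-t_ib)\bigr)\geq c\cdot\max\{|a|,|b|\}^{r-2-\epsilon}$ for the \emph{full radical} of the squarefree form $\prod_i P_i(a,b)$; your display bounds $\prod_i\mathrm{rad}(B_i)$ from below with exponent $\sum_i(1-1/e_i)-2$, which is a different and unproved refinement. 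Your own sanity check exposes the problem: for $e_i\equiv 2$ your formula gives exponent $r/2-2$, yet you say it ``reduces to'' the statement that the squarefree part of $\prod_i(a-t_ib)$ is at least $H^{r-2-\epsilon}$ --- and that statement cannot be right, since it would yield at most $x^{2/(r-2)+\epsilon}$ twists with a nontrivial rational point, strictly better than Granville's exponent $1/(g-1)=2/(r-4)$ which this theorem is supposed to recover. The correct exponent for the squarefree (or $q_0$-th-power-free) part is $r-2-2/(q_0-1)-\epsilon$, i.e.\ $r-4-\epsilon$ when $q_0=2$. The paper obtains it by a short explicit argument you are missing: writing $B_{\geq q_0}$ for the product of the primes dividing $P(a,b)=\prod_iP_i(a,b)$ with multiplicity at least $q_0$, one has $\mathrm{rad}(P(a,b))\leq |P(a,b)|/B_{\geq q_0}^{\,q_0-1}$, whence $B_{\geq q_0}\ll \max\{|a|,|b|\}^{(2+\epsilon)/(q_0-1)}$, and dividing the radical lower bound by $B_{\geq q_0}$ isolates the low-multiplicity primes. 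Your proposed ``stratification by power-profile'' is left entirely unexecuted and, without an input of this kind, there is no reason the sum over strata converges to the claimed bound.

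A secondary but real error: the claim that $v_p(|d_{E_{t_0}}|)\geq |G|(1-1/e_i)$ whenever $e_i\nmid v_p(a-t_ib)$ overstates Beckmann's theorem. The inertia group at $p$ is generated by $\tau_i^{v_p(a-t_ib)}$, of order $e_i/\gcd(e_i,v_p(a-t_ib))$, so the exponent is $|G|\bigl(1-\gcd(e_i,v_p)/e_i\bigr)$, which is strictly smaller than $|G|(1-1/e_i)$ when $\gcd(e_i,v_p)>1$. The paper sidesteps this by retaining only primes of multiplicity strictly less than $q_i$ (the least prime factor of $e_i$), for which the full ramification index $e_i$ is attained. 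This only changes the value of the constant $e$ --- the theorem merely asserts the existence of some $e>0$ --- but your first display is false as written and should be repaired along these lines.
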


\noindent
See Theorem \ref{thm:abc_spec} for a more precise statement where we relax the lower bound on $r$ and give the precise definition of the exponent $e$.

To show that the specialization set of a given regular $G$-extension of $\Qq(T)$ with sufficiently many branch points is of density 0 (under the abc-conjecture), it then suffices, by \eqref{eq:intro_1}, to show that $|\mathcal{S}(G,x)|$ is asymptotically ``bigger" than $x^e$. A main difficulty to get this conclusion is that the asymptotic behaviour of $|\mathcal{S}(G,x)|$ is widely unknown for arbitrary finite groups $G$. However, general conjectures are available in the literature. 

For example, the Malle conjecture \cite{Mal02}, a classical landmark in this context, asserts that if $k$ is a number field and $G$ a finite group, then the number of $G$-extensions of $k$ whose relative discriminant has norm at most $x$ is roughly asymptotic to $x^{\alpha(G)}$, for some well-defined constant $\alpha(G)$ (recalled in \eqref{eq:intro_3} below). See \cite{Mal02} for more details and \cite[\S1.1]{Deb17} for a recent review of the state-of-the-art on the conjecture and its generalizations.

We only recall in details the lower bound predicted by the conjecture (in the specific case $k=\Qq$), which is enough for our purposes:

\vspace{2mm}

\noindent
{\bf{The Malle conjecture (lower bound).}} {\it{Let $G$ be a non-trivial finite group and let $p$ be the smallest prime divisor of $|G|$. Then there exists a positive constant $c(G)$ such that
\begin{equation} \label{conj:malle_lower}
c(G) \cdot x^{\alpha(G)} \leq |\mathcal{S}(G,x)|
\end{equation}
for every sufficiently large integer $x$, where
\begin{equation} \label{eq:intro_3}
\alpha(G)= \frac{1}{|G|} \cdot \frac{p}{p-1}.
\end{equation}}}

\vspace{-1mm}

\noindent
Note that if the lower bound \eqref{conj:malle_lower} holds for a given  finite group $G$ (for sufficiently large $x$), then $G$ occurs as a Galois group over $\Qq$.

The combination of \eqref{eq:intro_1} and \eqref{conj:malle_lower} then allows us to give this answer to Question \ref{ques:density0}:

\begin{theorem} \label{thm:intro_2}
Let $G$ be a finite group and $E/\Qq(T)$ a regular $G$-extension with $r \geq 7$ branch points. Suppose \eqref{conj:malle_lower} is fulfilled for the group $G$ and the abc-conjecture holds. Then the set of specializations of $E/\Qq(T)$ is of density zero.
\end{theorem}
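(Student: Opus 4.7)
The plan is to combine the abc-conditional upper bound of Theorem \ref{thm:intro_1} with the Malle-type lower bound \eqref{conj:malle_lower}. Specifically, Theorem \ref{thm:intro_1} applied to $E/\Qq(T)$ produces a constant $e > 0$, depending only on $r$, on $|G|$, and on the ramification indices at the branch points, such that, under the abc-conjecture,
\begin{equation*}
|{\rm{Sp}}(E) \cap \mathcal{S}(G,x)| \leq x^{e+\epsilon}
\end{equation*}
for every $\epsilon > 0$ and every sufficiently large $x$. Simultaneously, \eqref{conj:malle_lower} provides a constant $c(G) > 0$ with
\begin{equation*}
|\mathcal{S}(G,x)| \geq c(G) \cdot x^{\alpha(G)}
\end{equation*}
for $x$ large enough, where $\alpha(G) = p/((p-1)|G|)$ and $p$ is the smallest prime divisor of $|G|$. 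Dividing the two estimates yields
\begin{equation*}
\frac{|{\rm{Sp}}(E) \cap \mathcal{S}(G,x)|}{|\mathcal{S}(G,x)|} \leq \frac{1}{c(G)}\cdot x^{\,e + \epsilon - \alpha(G)},
\end{equation*}
so the density zero conclusion follows the moment one fixes an $\epsilon > 0$ with $e + \epsilon < \alpha(G)$, that is, the moment the strict inequality $e < \alpha(G)$ is established.

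All the content of the theorem is therefore compressed into this single numerical comparison, and I expect the hypothesis $r \geq 7$ (rather than the weaker $r \geq 5$ of Theorem \ref{thm:intro_1}) to be used precisely here. Concretely, the exponent $e$ coming out of Theorem \ref{thm:abc_spec} should take a shape roughly of the form $C(r,\text{ramification})/|G|$, where the numerator $C$ arises from Riemann--Hurwitz bookkeeping over the branch locus and decreases as the number $r$ of branch points grows; whereas the Malle exponent satisfies $\alpha(G) \geq 1/|G|$, with strict excess $1/((p-1)|G|)$ whenever $|G|$ is even. The problem thus reduces to a purely combinatorial estimate of the shape $C(r,\text{ramification}) < p/(p-1)$, uniformly over admissible tuples of ramification indices $\geq 2$, under the assumption $r \geq 7$. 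The main obstacle is checking this uniformly: the worst case is the one in which all ramification indices equal $2$, and the extra two branch points allowed by $r \geq 7$ compared with $r \geq 5$ should be exactly what cuts $C$ below the threshold $p/(p-1) \geq 1$.

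Granted $e < \alpha(G)$, taking $\epsilon := (\alpha(G) - e)/2 > 0$ forces $x^{e + \epsilon - \alpha(G)} = x^{-(\alpha(G)-e)/2} \to 0$ as $x \to \infty$, so the ratio above vanishes in the limit, which is exactly the density zero statement for ${\rm{Sp}}(E)$. Note that the use of \eqref{conj:malle_lower} implicitly requires $G$ to be realizable over $\Qq$, but this is automatic in our setting, since the given regular $G$-extension of $\Qq(T)$ realizes $G$ via Hilbert's irreducibility theorem.
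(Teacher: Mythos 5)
Your proposal is correct and follows exactly the paper's route: Corollary \ref{coro:abc+malle} divides the abc-conditional upper bound of Theorem \ref{thm:abc_spec} by the Malle lower bound \eqref{conj:malle_lower}, and the whole content is the numerical check $e<\alpha(G)$, which with $e$ as in \eqref{eq1.5} becomes condition \eqref{eq2}; Remark \ref{rk:eq3} confirms your guess that the worst case is all ramification indices equal to $2$, where the right-hand side of \eqref{eq2} equals $6$, so $r\ge 7$ suffices. The only (harmless) imprecision is attributing the shape of $e$ to Riemann--Hurwitz bookkeeping, whereas it comes from the radical estimate of Elkies--Langevin--Granville in the proof of Theorem \ref{thm:abc_spec}.
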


\noindent
See Corollary \ref{coro:abc+malle} for a more precise statement. It should be pointed out that the bound $r \geq 7$ is not sharp (towards a density zero conclusion for every regular $G$-extension of $\Qq(T)$ of genus at least 2). For example, we can easily drop to $r \geq 6$ if $|G|$ is odd or to $r \geq 5$ if $|G|$ is prime to 6. Moreover, we obtain a conditional linear bound (depending on $G$) on the genus of a given regular $G$-extension $E/\Qq(T)$ for the set ${\rm{Sp}}(E)$ being of density 0. See Remark \ref{rk:eq3} for more details.

The bound \eqref{conj:malle_lower} is known to hold for several finite groups, thus providing concrete situations for which Theorem \ref{thm:intro_2} can be worded without mentioning it. For instance, relying on Shafarevich's theorem solving the inverse Galois problem for solvable groups, Kl\"uners and Malle \cite{KM04} proved the (lower bound of the) Malle conjecture for nilpotent groups. Another example is given by dihedral groups of order $2p$ with $p$ an odd prime, as proved by Kl\"uners in \cite{Klu06}. Moreover, many finite groups $G$ are such that every regular $G$-extension of $\Qq(T)$ has at least 7 branch points, thus yielding examples of groups $G$ for which the specialization set of {\it{every}} regular $G$-extension of $\Qq(T)$ is of density zero, under the abc-conjecture and, possibly, the lower bound \eqref{conj:malle_lower}. Such considerations are collected in Corollary \ref{coro:abc_explicit}. 

Although there is no known counterexample, the bound \eqref{conj:malle_lower} remains widely open, e.g., for most non-solvable groups. In the sequel, we give a variant of Theorem \ref{thm:intro_2} which applies to all finite groups, where the assumption that \eqref{conj:malle_lower} holds is not needed but where the bound on the number of branch points is less explicit. See Theorem \ref{thm:compare} for more details. This uses the already mentioned result of D\`ebes \cite{Deb17}, whose aim was to provide an unconditional weak version of the bound \eqref{conj:malle_lower} for regular Galois groups over $\Qq$ (i.e., for finite groups $G$ such that there is a regular $G$-extension of $\Qq(T)$), obtained by considering $G$-extensions of $\Qq$ which arise as specializations of a single regular $G$-extension of $\Qq(T)$. It should be pointed out that, by Theorem \ref{thm:intro_2}, one cannot hope (for arbitrary finite groups $G$) to obtain the exact bound \eqref{conj:malle_lower} in this way, thereby showing the limitations of the approach in \cite{Deb17}.

As a further result, we give a second variant, where the abc-conjecture is not required and no assumption on the number of branch points is made, provided the uniformity conjecture\footnote{which asserts that the number of $\Qq$-rational points on any given smooth curve over $\Qq$ of genus at least 2 is bounded by a quantity which depends only on the genus of the curve (but not on the curve itself).} holds and the upper bound from the Malle conjecture for some quotient of the underlying Galois group is taken into account (see Theorem \ref{thm:uniform}). As under the abc-conjecture, we may derive explicit examples of finite groups $G$ for which the specialization set of every regular $G$-extension of $\Qq(T)$ is of density zero, under the uniformity conjecture (see Corollary \ref{coro:uniform}). Note that, as Theorem \ref{thm:intro_2} and its consequences, Corollary \ref{coro:uniform} easily provides density zero conclusions for regular $G$-extensions of $\Qq(T)$ with few branch points.

\subsection{Unconditional results} \label{ssec:intro_3}

We start with the quadratic case. In the work \cite{Leg18b}, it was proved that, for ``almost all" regular $\Zz/2\Zz$-extensions $E/\Qq(T)$, at least one quadratic extension of $\Qq$ is not in ${\rm{Sp}}(E)$. Here, we sharpen this result as follows:

\begin{theorem} \label{thm:intro_3} 
Given an even positive integer $r$, the proportion of all regular $\Zz/2\Zz$-extensions $E/\Qq(T)$ with $r$ branch points, ``height" at most $H$, and whose set of specializations is of density $0$ tends to $1$ as $H$ tends to $\infty$. 
\end{theorem}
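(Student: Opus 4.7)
The plan is to parametrize regular $\Zz/2\Zz$-extensions $E/\Qq(T)$ having $r$ branch points (with $r$ even) by squarefree polynomials $P(T) \in \Zz[T]$ of degree exactly $r$, the ``height'' of $E$ being a suitable norm on the coefficients of $P$. For $E_P := \Qq(T)(\sqrt{P(T)})$, the specialization at $t_0 \in \Qq$ is $\Qq(\sqrt{P(t_0)})$, so the quantity
\[
N_P(x) := |{\rm{Sp}}(E_P) \cap \mathcal{S}(\Zz/2\Zz, x)|
\]
equals the number of squarefree integers $d$ with $|d| \leq x$ appearing as a squarefree part of $P(t_0)$ for some $t_0 \in \Qq$. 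After clearing denominators via $t_0 = a/b$ (and using $r$ even), this is the number of squarefree parts of size $\leq x$ of values of the binary form $F_P(a,b) := \sum_{i=0}^{r} c_i \, a^i b^{r-i}$ on coprime pairs $(a,b) \in \Zz^2$. Since $|\mathcal{S}(\Zz/2\Zz, x)| \sim c \cdot x$, density zero for $E_P$ amounts to $N_P(x) = o(x)$ as $x \to \infty$.

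The key step is an averaging bound of the form
\[
\sum_{P: \, \mathrm{height}(P) \leq H} N_P(x) \;=\; o\!\left( (2H+1)^{r+1} \cdot x \right),
\]
which, combined with Markov's inequality, will yield $N_P(x) = o(x)$ for a proportion of $P$ that tends to $1$ as $H \to \infty$. To produce this bound, I would split $N_P(x)$ according to the size of a certifying pair $(t_0, s)$ on the twist $C_d : dY^2 = P(X)$: pairs with $|F_P(a,b)| \leq x$ contribute at most $O(x^{2/r})$ squarefree parts (using classical Mahler-type counts for binary forms of degree $r \geq 3$), which is $o(x)$ for $r \geq 4$; pairs with $|F_P(a,b)| > x$ force an atypically large square factor $s^2 \geq |F_P(a,b)|/x$. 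I would then count quadruples $(P, d, t_0, s)$ with $P(t_0) = ds^2$: each fixed $(d, t_0, s)$ imposes a single $\Zz$-linear constraint on the $r+1$ coefficients of $P$ and so contributes $O(H^r)$ polynomials; summing over $(d, t_0, s)$ in the admissible range would then deliver the desired estimate.

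The hardest part will be the canonical choice of $(t_0, s)$ for each pair $(P, d)$, so as to avoid overcounting stemming from multiplicity of rational points on the twists $C_d$. For $r \geq 6$, these twists have genus $\geq 2$ and by Faltings have only finitely many rational points, though ineffectively so; here I would select a minimal-height point and reduce the count to an explicit volume estimate. For $r = 4$ the twists are genus one and can have arbitrary rank, so either some average-rank input for the quadratic twist family or a direct sieve on squarefree parts of quartic binary forms is needed. For $r = 2$, the squarefree parts represented by a binary quadratic form lie in a fixed subgroup of $\Qq^* / (\Qq^*)^2$ attached to the associated quadratic field, and Landau's theorem gives only $O(x / \sqrt{\log x}) = o(x)$ such parts of absolute value $\leq x$, handling this case without any averaging. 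Unifying these case analyses into a single averaging bound linear in $x$ for every even $r \geq 2$ is the main technical hurdle; once in place, Markov's inequality yields the claimed density statement for almost all $P$.
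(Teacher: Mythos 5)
Your strategy (average $N_P(x)$ over the family of polynomials and apply Markov's inequality) has a structural gap before any of the analytic difficulties you flag. An estimate of the form $\sum_{\mathrm{height}(P)\le H} N_P(x) = o\big(H^{r+1}x\big)$ controls, \emph{for each fixed $x$}, the proportion of $P$ with $N_P(x)>\delta x$; but the exceptional set of $P$'s produced by Markov depends on $x$, and the conclusion you need is that a single density-one set of $P$'s satisfies $N_P(x)=o(x)$ \emph{as $x\to\infty$}. The union over $x$ of the exceptional sets can have full density even if each has density $o(1)$, so the argument as stated does not close; you would need a uniform-in-$x$ bound such as $\sum_P \sup_x N_P(x)\log^{\alpha}(x)/x \ll H^{r+1}$, which is a much stronger statement and is not what your decomposition produces. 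Beyond this, the pieces you identify as ``the hardest part'' are genuinely unresolved: for $r=4$ the twists are genus-one curves and the paper's introduction recalls (Vatsal, Byeon) that there exist quartic/cubic $P$ for which a \emph{positive proportion} of twists have points, so any argument must isolate the good $P$'s by some structural condition, not just by counting; for $r\ge 6$ Faltings gives finiteness of rational points on each twist but with no effective or uniform bound on their number, so the ``select a minimal-height point'' step has nothing to control the multiplicity of the fibers of your map $(P,d)\mapsto(t_0,s)$; and your count of quadruples over pairs $(a,b)$ with $|F_P(a,b)|>x$ ranges over an unbounded set and is not shown to converge to the claimed $o(H^{r+1}x)$.

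The paper proves the theorem by an entirely different, local mechanism that avoids every one of these issues. The density-one set of good $P$ is defined by a Galois-theoretic condition on the branch points (satisfied whenever $P$ has Galois group $S_N$, hence for $1-O(\log H/\sqrt H)$ of all $P$ by a quantitative Hilbert irreducibility theorem of Cohen): the roots of $P$ are permuted by some $\sigma\in\mathrm{Gal}$ without fixed points. By Chebotarev this yields a positive-density set $\mathcal S_0$ of primes $p$ that are not prime divisors of $P$, i.e.\ $v_p(b^{\deg P}P(a/b))\le 0$ for all coprime $(a,b)$; an elementary valuation computation then shows that for any squarefree $d$ divisible by some $p\in\mathcal S_0$, the twist $y^2=d\cdot P(t)$ has no non-trivial rational point at all (equivalently, by Beckmann's theorem, $p$ ramifies in no specialization while it ramifies in $\Qq(\sqrt d)$). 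Thus $\mathcal N_2(P(T))$ is contained in the set of integers coprime to $\mathcal S_0$, which has size $O(x\log^{-\alpha}x)$ by Serre's density theorem. This gives the density-zero conclusion for every $P$ in the good set simultaneously and uniformly in $r$ (including $r=2$ and $r=4$), with no appeal to Faltings, no genus hypothesis, and no averaging. If you want to salvage a global counting approach, you would at minimum need to build in a $P$-by-$P$ criterion of this local type; as written, the proposal does not constitute a proof.
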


\noindent
From a diophantine point of view, this means that ``most" quadratic twists of ``most" hyperelliptic curves over $\Qq$ have only trivial $\Qq$-rational points, unconditionally (see Proposition \ref{prop:Z/2Z}(b)). See Theorems \ref{thm:Z/2Z_even} and \ref{thm:super_uncond} for more precise statements, and \S\ref{ssec:intro_5} for diophantine considerations in a more general context.

On the one hand, Theorem \ref{thm:intro_3} shows that invoking the abc-conjecture in the case $G=\zz/2\zz$ of Theorem \ref{thm:intro_2} is only necessary for comparatively few extensions. On the other hand, it shows that even among regular $\Zz/2\Zz$-extensions of $\Qq(T)$ to which Theorem \ref{thm:intro_2} does not apply (i.e., those with $r \leq 6$ branch points), only a few can be exceptions in Question \ref{ques:density0}\footnote{Clearly, there are exceptions in the case $r=2$ and, as already recalled in \S\ref{ssec:intro_0}, exceptions also exist in the case $r=4$. However, no exception seems to be expected in the case $r=6$ \cite[Conjecture 1]{Gra07}.}.

The second example we discuss is the symmetric group $S_3$. In this context, we have this result (see Theorem \ref{thm:S3} for a more precise statement):

\begin{theorem} \label{thm:intro_5}
Let $D$ be a positive integer. Then inside the set of all polynomials $P (T , Y ) = Y^3 + a(T )Y^2 + b(T )Y + c(T )$ with $a(T), b(T), c(T) \in \zz[T]$ of degree $\le D$ and of height $\le H$, the set of those $P(T,Y)$ which additionally define a regular $S_3$-extension of $\Qq(T)$ whose specialization set is of density zero, makes up a proportion tending to $1$ as $H$ tends to $\infty$.
\end{theorem}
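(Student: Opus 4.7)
The plan is to reduce to the already-established $\zz/2\zz$ case (Theorem 1.3) via the quadratic resolvent of the $S_3$-extension defined by $P$. First, outside a subset of triples $(a,b,c)$ of proportion $O(1/H)$, the polynomial $P(T,Y) = Y^3 + a(T)Y^2 + b(T)Y + c(T)$ is irreducible in $\qq(T)[Y]$ and its $Y$-discriminant $\Delta(T) \in \zz[T]$ is a squarefree non-square polynomial of exact degree $4D$; each bad condition is cut out by vanishing of a nonzero polynomial in the coefficients of $(a,b,c)$, and the density estimate follows by standard lattice-point counting. Under these conditions, the splitting field $E/\qq(T)$ of $P$ is a regular $S_3$-extension whose unique (regular) $\zz/2\zz$-subextension is $E' := \qq(T)(\sqrt{\Delta(T)})$, having exactly $r = 4D$ branch points; Riemann--Hurwitz then gives the attached $S_3$-cover $X \to \mathbb{P}^1_\qq$ a genus $g(X) = 6D - 5$.

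Any $F \in \mathrm{Sp}(E) \cap \mathcal{S}(S_3, x)$ has its unique quadratic subfield equal to $E'_{t_0}$ (where $t_0$ is the corresponding specialization point), hence lies in $\mathrm{Sp}(E')$. The conductor-discriminant identity $|d_F| = |d_{F'}| \cdot |d_K|^2$ (for $K$ a cubic subfield of $F$) combined with the divisibility $d_{F'} \mid d_K$ yields $|d_F| \geq |d_{F'}|^3$, so the projection $F \mapsto F'$ lands in $\mathrm{Sp}(E') \cap \mathcal{S}(\zz/2\zz, x^{1/3})$. For $D \ge 2$ one has $g(X) \ge 7$, so Faltings guarantees that each fiber of this projection, taken at the level of parameters $t_0 \in \mathbb{P}^1(\qq)$, is finite.

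Theorem 1.3 with $r = 4D$ says that a density-$1$ family of regular $\zz/2\zz$-covers $\qq(T)(\sqrt{h})$ with $r$ branch points has $\mathrm{Sp}(\cdot)$ of density zero. A subtlety is that density-$0$ bad sets of $h$'s need not pull back to density-$0$ sets of $(a,b,c)$ under the polynomial map $(a,b,c) \mapsto \Delta(T)$. I plan to handle this by rerunning the proof technique of Theorem 1.3 directly in the $(a,b,c)$-family: the underlying mechanism -- counting bad specializations via finiteness results on quadratic twists of the hyperelliptic curve $z^2 = \Delta(T)$ -- transfers verbatim once one observes that $\Delta(T)$ is squarefree of maximal degree $4D$ for all but a vanishing proportion of our triples.

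Combining this with the unconditional Davenport--Heilbronn / Bhargava asymptotic $|\mathcal{S}(S_3,x)| \asymp x^{1/3}$ should yield $|\mathrm{Sp}(E) \cap \mathcal{S}(S_3, x)| = o(x^{1/3})$, which is the desired density-zero conclusion. The main obstacle I foresee is controlling the fiber sizes $|\{F \in \mathrm{Sp}(E) : F \supset F'\}|$: pointwise Faltings finiteness is not enough, so I expect to need an averaged bound combining a quantitative form of Theorem 1.3 (of shape $|\mathrm{Sp}(E') \cap \mathcal{S}(\zz/2\zz, y)| = o(y)$, which its proof should supply) with the Malle asymptotic for cyclic cubic extensions of a quadratic field. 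The edge case $D = 1$, where $g(X) = 1$, would have to be treated separately via a direct analysis of the corresponding one-parameter family of elliptic $S_3$-covers.
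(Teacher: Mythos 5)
Your reduction to the quadratic resolvent has two genuine gaps, one of which is fatal to unconditionality. First, the fiber-size problem you flag at the end is not a technicality but the crux: to pass from $|{\rm{Sp}}(E')\cap\mathcal{S}(\Zz/2\Zz,x^{1/3})|=o(x^{1/3})$ to $|{\rm{Sp}}(E)\cap\mathcal{S}(S_3,x)|=o(x^{1/3})$ you need the fibers of $F\mapsto F'$ to be bounded \emph{uniformly} over all $F'$, i.e.\ a bound on $|\widetilde{X}^{\varphi}(\Qq)|$ uniform over all twists of the curve $z^2=\Delta(t)$ (resp.\ of $X$). Pointwise Faltings finiteness gives no such bound; a uniform bound over a family of twists is essentially the uniformity conjecture, and indeed the paper's Theorem \ref{thm:uniform}, which implements exactly this ``project to a quotient and count fibers'' strategy, is stated conditionally on that conjecture for precisely this reason. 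The averaged substitute you sketch (quantitative $\Zz/2\Zz$ bound times a Malle-type count of cyclic cubic extensions of each $F'$) also does not close the gap, because the extensions in a fiber are not arbitrary cubic extensions of $F'$ but those cut out by rational points on a twisted curve, and no unconditional count of those is available. Second, your claim that the $\Zz/2\Zz$ mechanism ``transfers verbatim once $\Delta(T)$ is squarefree of degree $4D$'' misidentifies that mechanism: the unconditional proof of Theorem \ref{thm:intro_3} (via Theorem \ref{thm:super_uncond}) rests on condition ($*$), namely that some element of ${\rm{Gal}}(L/\Qq)$ fixes no root of $\Delta$, so that a positive-density set of primes divides no value $\Delta(t_0)$. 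Squarefreeness does not give this (e.g.\ $\Delta$ could factor so that every Frobenius class fixes a root); one needs the Galois group of $\Delta(T)$ over $\Qq$ to be large, which in turn requires showing the generic discriminant is irreducible and then applying quantitative Hilbert irreducibility.

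The paper's actual proof sidesteps both issues by never leaving the $S_3$ level. It proves (Lemmas \ref{lemma_0}--\ref{lemma_1}) that for a proportion $1-O(\log(H)/\sqrt{H})$ of triples $(a,b,c)$ the discriminant $\Delta(T)$ is irreducible over $\Qq$, hence all branch points of the $S_3$-cover are algebraically conjugate; Lemma \ref{lemma_2} (Jordan's fixed-point-free element plus Chebotarev and Beckmann's theorem) then produces a positive-density set $\mathcal{S}_0$ of primes at which \emph{no} specialization ramifies; finally, the Bhargava mass formula for $S_3$-extensions (established unconditionally in \cite{BW08}) together with \cite[th\'eor\`eme 2.3]{Ser76} shows that $S_3$-extensions unramified at every prime of $\mathcal{S}_0$ have density $O(\log^{-\alpha}(x))$ in $\mathcal{S}(S_3,x)$. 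No rational-point counting, no fiber bounds, and no Davenport--Heilbronn asymptotic are needed. If you want to salvage your route, you would at minimum have to replace ``squarefree'' by ``irreducible'' (proving the generic-discriminant irreducibility lemma) and then find an unconditional substitute for the uniform fiber bound; the latter is the real obstruction.
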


\subsection{Comparison with previous non-parametricity results} \label{ssec:intro_3.5}

As already said, it was known from \cite{KL18} and \cite[\S7]{KLN19} that many finite groups $G$ do not have any parametric extension $E/\Qq(T)$. However, our results sharpen conditionally this conclusion. Indeed, by Theorem \ref{thm:intro_2}, for many finite groups $G$, not only at least one $G$-extension of $\Qq$ but actually almost all of them are not specializations of a given regular $G$-extension of $\Qq(T)$, under the abc-conjecture. Moreover, this yields (conditional) new examples of finite groups with no parametric extension $E/\Qq(T)$ (see Remark \ref{rk:abc_explicit}(b)). Furthermore, a property shared by the groups $\Zz/2\Zz$ and $S_3$ is that they admit a parametric extension $E/\Qq(T)$. Theo\-rems \ref{thm:intro_3} and \ref{thm:intro_5} show that if $G=\Zz/2\Zz$ or $S_3$, then parametric realizations are rare and, for almost all regular $G$-extensions $E/\Qq(T)$, the same fully opposite conclusion on the size of ${\rm{Sp}}(E)$ holds.

\subsection{Local-global considerations} \label{ssec:intro_4}

Our conditional results are global results, in the sense that they depend on diophantine properties and the arithmetic of curves over $\Qq$. On the contrary, our unconditional results are mostly due to local arguments. Namely, given a re\-gular $G$-extension $E/\Qq(T)$, let ${\rm{Sp}}(E)^{\rm{loc}}$ be the set of all $G$-extensions $F/\Qq$ such that $F\Qq_p/\Qq_p$ is a specialization of $E\Qq_p/\Qq_p(T)$ for all primes $p$ (including $p=\infty$, in which case $\Qq_p = \Rr$). Our local arguments consist in proving that, for almost all regular $G$-extensions $E/\Qq(T)$,
\begin{equation} \label{eq:ratio}
\frac{|{\rm{Sp}}(E)^{\rm{loc}} \cap {\mathcal{S}}(G,x)|}{|{\mathcal{S}}(G,x)|}
\end{equation}
tends to 0 as $x$ tends to $\infty$, thus yielding, in particular, that ${\rm{Sp}}(E)$ is of density 0. That is, almost all $G$-extensions $F/\Qq$ are not specializations of $E/\Qq(T)$ as this is wrong even up to base change from $\Qq$ to $\Qq_p$ (for at least one suitable prime $p$ depending on $F$). This suggests this refinement of Question \ref{ques:density0}, which asks whether the specialization set of $E/\Qq(T)$ is of density 0 even within the set of those $G$-extensions of $\Qq$ who pass these local obstructions:

\begin{question} \label{ques:Hasse2}
Let $G$ be a finite group. Does it hold that, for a given regular $G$-extension $E/\Qq(T)$, not in some exceptional list, the ratio
\begin{equation} \label{eq:ratio_2}
\frac{|{\rm{Sp}}(E) \cap {\mathcal{S}}(G,x)|}{|{\rm{Sp}}(E)^{\rm{loc}} \cap {\mathcal{S}}(G,x)|}
\end{equation}
tends to 0 as $x$ tends to $\infty$?
\end{question}

\noindent
A positive answer means that there exist ``many" $G$-extensions of $\Qq$ which are not specializations of $E/\Qq(T)$, but this cannot be detected by local considerations, implying the failure of a local-global principle for specializations.

In \S\ref{sec:hasse}, we prove the following result, which provides some evidence for a positive answer to Question \ref{ques:Hasse2} and strengthens the conclusion of Theorem \ref{thm:intro_2} for abelian groups:

\begin{theorem} \label{thm:intro_6}
Let $G$ be a finite abelian group and $E/\Qq(T)$ a regular $G$-extension with $r \geq 7$ branch points. Then the ratio \eqref{eq:ratio_2} tends to 0 as $x$ tends to $\infty$, under the abc-conjecture.
\end{theorem}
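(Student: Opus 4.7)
The plan is to sandwich the ratio \eqref{eq:ratio_2} between the upper bound produced by Theorem \ref{thm:intro_1} on the numerator and a matching lower bound on the denominator. By Theorem \ref{thm:intro_1}, under the abc-conjecture one has $|{\rm{Sp}}(E)\cap\mathcal{S}(G,x)|\le x^{e+\epsilon}$ for every $\epsilon>0$ and $x$ large, where $e$ depends on $r$, $|G|$, and the ramification indices. The proof of Theorem \ref{thm:intro_2} shows that the hypothesis $r\ge 7$ forces $e<\alpha(G)$. Hence it suffices to establish the lower bound
\[
|{\rm{Sp}}(E)^{\rm{loc}}\cap\mathcal{S}(G,x)| \;\gg\; x^{\alpha(G)}(\log x)^{b(G)-1}
\]
for some constant $b(G)\ge 1$; combining these two estimates immediately yields that \eqref{eq:ratio_2} tends to $0$.

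To obtain that lower bound I would first describe ${\rm{Sp}}(E)^{\rm{loc}}$ as a set of $G$-extensions of $\Qq$ cut out by local conditions at a \emph{finite} set $S$ of primes. Let $S$ consist of the (finitely many) primes of bad reduction of a chosen integral model of $f\colon X\to\mathbb{P}^1_\Qq$, together with the primes dividing $|G|$ and the residue characteristics of the branch points. For $p\notin S$, a Hensel-type argument (lifting $\mathbb{F}_p$-points of the cover via smooth reduction) combined with the Chebotarev density theorem for $f$ shows that every unramified $G$-extension $F/\Qq$ with decomposition behaviour at $p$ corresponding to \emph{any} element of $G$ lies in the local set; since $G$ is abelian, this means no restriction at all beyond being unramified at $p$. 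For $p\in S$, the set of allowed local extensions is a non-empty finite list, and one checks, using that the $\Qq_p$-points of $\mathbb{P}^1_\Qq$ away from the branch locus are $p$-adically dense, that it contains at least one admissible local $D_p$-extension.

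With ${\rm{Sp}}(E)^{\rm{loc}}$ thus identified as the set of $G$-extensions of $\Qq$ satisfying prescribed local conditions at the finitely many primes of $S$ (and no condition elsewhere beyond the usual ramification control contributing to the discriminant), the lower bound follows from an abelian counting result with prescribed local behaviour, e.g. Wright's asymptotic formula for abelian extensions (or its refinement by M\"aki) applied to the open subgroup of the id\`ele class group corresponding to the local conditions. This yields an asymptotic of the same order as $|\mathcal{S}(G,x)|$, up to a positive multiplicative constant, and in particular the required lower bound of order $x^{\alpha(G)}$.

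The main obstacle will be the second step: ensuring that the local conditions at the bad primes $p\in S$ do \emph{not} conspire to cut out an unusually thin subset of $G$-extensions. One has to show both non-emptiness (there exists at least one admissible local extension at each $p\in S$, matched together into a global $G$-extension via Grunwald--Wang -- with the abelian hypothesis avoiding the classical Wang obstruction except in the well-known exceptional $2$-power cases, which require minor care) and that the chosen subgroup of local conditions has positive Haar measure in the relevant id\`ele class group quotient. Both are essentially formal once the geometric description of the local specialization sets is in hand, but verifying them cleanly at the wild primes in $S$ (where ramification in $E/\Qq(T)$ may interact subtly with the residue characteristic) is the technical crux of the argument.
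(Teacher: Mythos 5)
Your overall architecture --- bound the numerator of \eqref{eq:ratio_2} by $x^{e+\epsilon}$ with $e<\alpha(G)$ via the abc-conjecture (Theorem \ref{thm:abc_spec}), then beat this with a lower bound for $|{\rm{Sp}}(E)^{\rm{loc}}\cap\mathcal{S}(G,x)|$ --- is exactly the paper's (Theorem \ref{thm:hasse_ab}), and the numerator half is fine. The gap is in your description of ${\rm{Sp}}(E)^{\rm{loc}}$, and it is fatal to your route to the denominator bound. The local conditions defining ${\rm{Sp}}(E)^{\rm{loc}}$ are \emph{not} conditions at a finite set $S$ of primes: for every prime $p\notin S$ at which a candidate $G$-extension $F/\Qq$ ramifies, the requirement that $F\Qq_p/\Qq_p$ be a specialization of $E\Qq_p/\Qq_p(T)$ forces (by Theorem \ref{beckmann}) the inertia group at $p$ to be generated by a power of a geometric inertia generator $\tau_i$ \emph{and} forces $p$ to be a prime divisor of the minimal polynomial of some branch point $t_i$, i.e.\ to have a suitable Frobenius in the splitting field of the branch points. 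When the branch points are not all $\Qq$-rational (the generic situation, and unavoidable e.g.\ for $G=\Zz/n\Zz$ with $\varphi(n)>2$ by the Branch Cycle Lemma), a positive density of primes can ramify in \emph{no} local specialization, so ${\rm{Sp}}(E)^{\rm{loc}}$ excludes every $F$ ramified at such a prime; by the Delange--Serre argument the paper uses in Theorem \ref{thm:S3} and Lemma \ref{lemma_2}, this alone cuts $|{\rm{Sp}}(E)^{\rm{loc}}\cap\mathcal{S}(G,x)|$ down by a factor $\log^{-\delta}(x)$ or worse, and in general ${\rm{Sp}}(E)^{\rm{loc}}$ has density $0$ in $\mathcal{S}(G,x)$ (this is precisely the point of \eqref{eq:ratio} in \S\ref{ssec:intro_4}). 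So your claimed lower bound $\gg x^{\alpha(G)}\log^{b(G)-1}(x)$, of the same order as $|\mathcal{S}(G,x)|$, is false in general, and the Wright/M\"aki plus Grunwald--Wang machinery cannot deliver it because those tools only prescribe behaviour at finitely many places.

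What the paper does instead is both weaker and constructive: it fixes one global specialization $f_{t_0}$ with surjective specialization morphism (which trivially passes all local tests), picks a central element $g$ of order $q=p$ lying in some inertia group (automatic for abelian $G$ since the inertia groups generate $G$), and for each prime $p$ in a positive-density set defined by splitting conditions it multiplies $f_{t_0}$ by a degree-$q$ cyclic extension ramified only at $p$ and passes to the quotient of $G\times\langle g\rangle$ by the diagonal. Proposition \ref{prop:DGKLN} (unramified primes, plus a tame totally ramified criterion after an Abhyankar pullback at the new prime $p$) verifies that each resulting epimorphism lies in ${\rm{Sp}}(f)^{\rm{loc}}$. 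Counting these gives only $\gg x^{\alpha(G)}\log^{-1}(x)$, but that already exceeds $x^{e+\epsilon}$, which is all the theorem needs. If you want to salvage your approach, you must replace the finite-set-of-primes description by an argument that certifies the local condition at \emph{every} prime for the extensions you count; producing them as perturbations of an actual global specialization is the paper's way of doing exactly that.
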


\noindent
See Theorem \ref{thm:hasse_ab} for a more general result which applies to any finite group $G$ with non-trivial center and to any regular $G$-extension $E/\Qq(T)$ with $r \geq 8$ branch points ($r \geq 7$ is sufficient for abelian groups) and suitable geometric inertia groups.

\subsection{Diophantine aspects} \label{ssec:intro_5}

In \S\ref{sec:diophantine}, we discuss diophantine aspects of our results, whose most general versions in the sequel are actually worded in terms of Galois covers of $\mathbb{P}^1$.

Given a regular Galois cover $f : X \rightarrow \mathbb{P}^1_\Qq$ with Galois group $G$ and a (continuous) epimorphism $\varphi : {\rm{G}}_\Qq  \rightarrow G$, where G$_\Qq$ denotes the absolute Galois group of $\Qq$, there is a notion of {\it{twisted cover}} $\widetilde{f}^\varphi : \widetilde{X}^\varphi \rightarrow \mathbb{P}^1_\Qq$, introduced by D\`ebes in \cite{Deb99a}, which satisfies this property: $\varphi$ is a specialization morphism of $f$ \footnote{A refined version of ``a $G$-extension $F/\Qq$ occurs as a specialization of a regular $G$-extension $E/\Qq(T)$".} if and only if $\widetilde{X}^\varphi$ has a {\it{non-trivial}} $\Qq$-rational point, i.e., a $\Qq$-rational point which does not extend any branch point of $f$. See \S\ref{ssec:diophantine_prelim} for more details.

Hence, the most general versions of Theorems \ref{thm:intro_1}-\ref{thm:intro_5} can be stated with this diophantine flavour. For example, the corresponding variant of Theorem \ref{thm:intro_1} provides, for a regular Galois cover $f : X \rightarrow \mathbb{P}^1_\Qq$ of group $G$ with $r \geq 5$ branch points, an upper bound for the number of epimorphisms $\varphi : {\rm{G}}_\Qq \rightarrow G$ of bounded discriminant such that the twisted curve $\widetilde{X}^\varphi$ has at least one non-trivial $\Qq$-rational point. See Theorem \ref{thm:abc_spec2} for a more precise statement. The special case $G=\Zz/2\Zz$ of our result is nothing but a well-known result of Granville \cite[Corollary 1]{Gra07} on the number of quadratic twists of a given hyperelliptic curve over $\Qq$ of genus at least 2 with non-trivial $\Qq$-rational points, under the abc-conjecture (see Corollary \ref{coro:granville}).

Similarly, the same applies to Theorem \ref{thm:intro_6}. Given a regular Galois cover $f: X \rightarrow \mathbb{P}^1_\Qq$ of group $G$, the existence of an epimorphism $\varphi : {\rm{G}}_\Qq \rightarrow G$ which occurs as a specialization morphism of $f$ everywhere locally but not globally means that the twisted curve $\widetilde{X}^\varphi$ has a non-trivial $\Qq_p$-rational point for every prime $p$ but only trivial $\Qq$-rational points. This diophantine reformulation of the failure of our local-global principle for specializations is actually strictly identical to the failure of the Hasse principle for curves, provided $f$ has no $\Qq$-rational branch point. In particular, the diophantine analog of Theorem \ref{thm:intro_6} provides the following:

\begin{theorem} \label{thm:intro_7}
{\it{Let $C$ be a $\Qq$-curve with a finite abelian cover $f$ to $\mathbb{P}^1$ such that $f$ has at least 7 branch points and $f$ has no $\Qq$-rational branch point. Assume the abc-conjecture holds. Then there exist ``many" $\Qq$-curves $C'$, which are isomorphic to $C$ up to base change from $\Qq$ to $\overline{\Qq}$ and which do not fulfill the Hasse principle.}}
\end{theorem}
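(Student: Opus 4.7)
The plan is to deduce Theorem~\ref{thm:intro_7} from Theorem~\ref{thm:intro_6} by exploiting the twisting dictionary of \S\ref{ssec:intro_5}: the failure of the local-global principle for specializations of $f$ translates directly into failures of the Hasse principle for the family of twisted curves associated to $f$. More precisely, let $G$ denote the (given finite abelian) Galois group of $f: C = X \to \mathbb{P}^1_\Qq$, and let $E/\Qq(T)$ be the corresponding regular $G$-extension. Under the abc-conjecture, Theorem~\ref{thm:intro_6} yields $|{\rm{Sp}}(E) \cap \mathcal{S}(G,x)| = o(|{\rm{Sp}}(E)^{\rm{loc}} \cap \mathcal{S}(G,x)|)$ as $x \to \infty$; in particular there are infinitely many $G$-extensions $F/\Qq$ whose associated epimorphism $\varphi_F : {\rm{G}}_\Qq \twoheadrightarrow G$ lies in ${\rm{Sp}}(E)^{\rm{loc}} \setminus {\rm{Sp}}(E)$.

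For each such $F$ I would set $C'_F := \widetilde{X}^{\varphi_F}$. Since $C'_F$ is a twist of $X$, it is automatically isomorphic to $C$ after base change to $\overline{\Qq}$. Moreover, the Hasse principle fails for $C'_F$: the condition $\varphi_F \in {\rm{Sp}}(E)^{\rm{loc}}$ means that $C'_F$ has a non-trivial $\Qq_p$-rational point for every prime $p$, so $C'_F(\Qq_p) \neq \emptyset$ for every $p$; conversely, $\varphi_F \notin {\rm{Sp}}(E)$ means $C'_F$ has no non-trivial $\Qq$-rational point, and since $f$ has no $\Qq$-rational branch point, any trivial $\Qq$-rational point of $C'_F$ would have to project to a $\Qq$-rational branch point of $f$ and hence cannot exist, so $C'_F(\Qq) = \emptyset$ as well.

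The main obstacle will be to turn ``many'' epimorphisms $\varphi_F$ into ``many'' pairwise non-$\Qq$-isomorphic curves $C'_F$, since a priori distinct cocycles can give rise to $\Qq$-isomorphic twists. I would handle this using the standard fact from twist theory that $\widetilde{X}^{\varphi}$ and $\widetilde{X}^{\varphi'}$ can be $\Qq$-iso\-mor\-phic as $\Qq$-varieties only if $\varphi$ and $\varphi'$ differ by composition with an element of $\mathrm{Aut}(X)$; thus each $\Qq$-isomorphism class of twists contains at most $|\mathrm{Aut}(X)|$ of the $\varphi_F$'s, a bound depending only on $C$. Combined with the lower bound on $|{\rm{Sp}}(E)^{\rm{loc}} \cap \mathcal{S}(G,x)| - |{\rm{Sp}}(E) \cap \mathcal{S}(G,x)|$ that Theorem~\ref{thm:intro_6} provides -- which, being comparable to $|\mathcal{S}(G,x)|$, grows at least like a positive power of $x$ by the Malle lower bound~\eqref{conj:malle_lower} (known unconditionally in the abelian case) -- this yields infinitely many, and indeed a ``density'' of, pairwise non-$\Qq$-isomorphic $\Qq$-curves $C'$, each geometrically isomorphic to $C$ and each failing the Hasse principle, which is the precise meaning of ``many'' in the statement.
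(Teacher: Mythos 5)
Your overall strategy is the paper's: translate the failure of the local--global principle for specializations (Theorem \ref{thm:intro_6}, i.e.\ Theorem \ref{thm:hasse_ab}) into failures of the Hasse principle for the twisted curves $\widetilde{X}^{\varphi}$ via Proposition \ref{tl}, using the absence of $\Qq$-rational branch points to rule out trivial points. That reduction is sound, and so is your observation that each $\Qq$-isomorphism class of twists is hit at most $|{\rm{Aut}}(X \otimes_\Qq \overline{\Qq})|$ times (finite here, since Riemann--Hurwitz with $r \geq 7$ forces the genus of $X$ to be at least $2$); the paper instead separates twists by noting that $\widetilde{f}^{\varphi_1}$ and $\widetilde{f}^{\varphi_2}$ have non-isomorphic function fields as soon as $\ker \varphi_1$ and $\ker \varphi_2$ have distinct fixed fields, which achieves the same finiteness of fibers.

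The one step that is actually wrong is your quantitative justification of ``many'': you assert that $|{\rm{Sp}}(E)^{\rm{loc}} \cap \mathcal{S}(G,x)| - |{\rm{Sp}}(E) \cap \mathcal{S}(G,x)|$ is comparable to $|\mathcal{S}(G,x)|$ by the Malle lower bound. The Malle bound controls $|\mathcal{S}(G,x)|$, not $|{\rm{Sp}}(E)^{\rm{loc}} \cap \mathcal{S}(G,x)|$, and the latter is typically of density zero in the former: by Theorem \ref{beckmann} combined with the Chebotarev argument of Lemma \ref{lemma_2}, when no branch point is $\Qq$-rational a positive density of primes ramify in no specialization (even locally), so most $G$-extensions already fail the local conditions. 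The correct lower bound, $|{\rm{Sp}}(f)^{\rm{loc}} \cap \overline{\mathcal{S}}(G,x)| \geq C(f) \cdot x^{\alpha(G)} \cdot \log^{-1}(x)$, is the content of Theorem \ref{thm:hasse_ab}(a),(c) and is obtained by an explicit construction (multiplying a fixed surjective specialization morphism by cyclic characters ramified at carefully chosen new primes and passing to a diagonal quotient); it is the technical core of the argument and cannot be replaced by a density statement about all $G$-extensions. If you only want infinitely many curves $C'$, your black-box use of Theorem \ref{thm:intro_6} does suffice, since ${\rm{Sp}}(E) \subseteq {\rm{Sp}}(E)^{\rm{loc}}$ and ${\rm{Sp}}(E)$ is infinite by Hilbert's irreducibility theorem, so a ratio tending to $0$ forces the difference set to be infinite; but to get the paper's quantitative meaning of ``many'' you must invoke Theorem \ref{thm:hasse_ab} directly, and you should in any case work at the level of epimorphisms in $\overline{\mathcal{S}}(G,x)$ rather than fields, since the twisting lemma requires a single $\varphi$ that is everywhere locally a specialization \emph{morphism}, which the field-level set ${\rm{Sp}}(E)^{\rm{loc}}$ of the introduction does not immediately provide.
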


\noindent
See Corollary \ref{thm:hasse_exp} for a more general result, which also applies in some non-abelian situations, and Corollary \ref{coro:hasse_existence} for a variant which in fact applies to any regular Galois group over $\Qq$ with non-trivial center, at the cost of choosing the curve $C$ more suitably. In the quadratic case, our results allow to retrieve a recent result of Clark and Watson \cite[Theorem 2]{CW18}, which asserts that ``many" quadratic twists of a hyperelliptic curve $C:y^2=P(t)$ with $P(T) \in \Zz[T]$ separable, of even degree $\geq 8$, and with no root in $\Qq$ do not fulfill the Hasse principle, under the abc-conjecture (see Corollary \ref{coro:CW18}).

\vspace{2mm}

{\bf Acknowledgements.} The first author was supported by the National Research Foundation of Korea (NRF Grant no. 2019002665). We also wish to thank Arno Fehm for his help with Lemma \ref{lemma_0}.

\section{Basics} \label{sec:basics}

The aim of this section is fourfold. \S\ref{ssec:basics_0} is devoted to some general notation we shall use in the sequel. In \S\ref{ssec:basics_1}, we recall classical material about Galois covers of the projective line while \S\ref{ssec:basics_2} is devoted to rational points on superelliptic curves. As to \S\ref{ssec:basics_3}, we there make the content of \S\ref{ssec:basics_1} explicit in the quadratic case, in relation with the material from \S\ref{ssec:basics_2}.

\subsection{General notation} \label{ssec:basics_0}

Denote the absolute Galois group of a field $k$ of characteristic zero by G$_k$. If $k'$ is a field containing $k$, we use the notation $\otimes_k \, k^\prime$ for the scalar extension from $k$ to $k'$. For example, if $X$ is a $k$-curve, then $X\otimes_kk'$ is the $k'$-curve obtained by scalar extension from $k$ to $k'$. Conjugation automorphisms in a group $G$ are denoted by ${\rm{conj}}(\omega)$ for $\omega \in G$: ${\rm{conj}}(\omega)(x) = \omega x \omega^{-1}$ ($x \in G$).

Let $n \geq 2$, $N \geq 1$, $x \geq 1$, and $H \geq 1$  be integers. Let $G$ be a finite group and $T$ an indeterminate. We use the following notation:

\noindent
(a) $\mathcal{S}(G,x)$: set of all $G$-extensions $F/\Qq$ such that $|d_F| \leq x$, where $d_F$ denotes the absolute discriminant of the number field $F$,

\noindent
(b) $\overline{\mathcal{S}}(G,x)$: set of all (continuous) epimorphisms $\varphi:{\rm{G}}_{\qq}\to G$ such that $\overline{\Qq}^{{\rm{ker}}(\varphi)}/\Qq \in \mathcal{S}(G,x)$, modulo the equivalence which identifies $\varphi$ and $\varphi'$ if $\varphi' = {\rm{conj}}(\omega) \circ \varphi$ for some $\omega \in G$ (the set $\overline{\mathcal{S}}(G,x)$ refines the set $\mathcal{S}(G,x)$ but note that the cardinalities are equal up to an explicit multiplicative constant depending only on $G$),

\noindent
(c) $\mathcal{N}_n$: set of all $n$-free integers, that is, of all integers $d$ such that $d \not \in \{0,1\}$ and $p^n$ divides $d$ for no prime number $p$ (if $n=2$, we say squarefree instead of $2$-free); recall that $\mathcal{N}_n$ has density $1/\zeta(n)$, where $\zeta$ denotes the Riemann-zeta function,

\noindent
(d) $\mathcal{N}_n(x)$: subset of $\mathcal{N}_n$ defined by the extra condition that $|d| \leq x$,

\noindent
(e) $\mathcal{P}(n,N)$: set of all degree $N$ polynomials $P(T) \in \Zz[T]$  whose roots have multiplicity $\leq n-1$,

\noindent
(f) $\mathcal{P}(n,N, H)$: subset of $\mathcal{P}(n,N)$ defined by the extra condition that the height is at most $H$; recall that the height of $a_0 + a_1 T + \cdots + a_N T^N$ is ${\rm{max}}(|a_0|, \dots, |a_N|)$,

\noindent
(g) $\mathcal{P}_2(n,N)$: subset of $\mathcal{P}(n,N)$ which consists of all elements $P(T)$ with squarefree content,

\noindent
(h) $\mathcal{P}_2(n,N, H) = \mathcal{P}_2(n,N) \cap \mathcal{P}(n,N, H)$.

\begin{definition} \label{def:density_general}
Let $B$ be a set, $(B_n)_{n \geq 1}$ an increasing sequence of finite subsets of $B$ such that $B= \cup_{n \geq 1} B_n$, and $A$ a subset of $B$. If 
$$\frac{|A \cap B_n|}{|B_n|}$$
tends to some $d \in [0,1]$ as $n$ tends to $\infty$, we say that $d$ is the {\it{density}} of the set $A$ (in $B$).
\end{definition}

Although this notion depends on the sequence $(B_n)_{n \geq 1}$, we do not make this dependency explicit in the terminology as our choices in the sequel will always be natural.

\subsection{Finite Galois covers of the projective line} \label{ssec:basics_1}

Let $k$ be a field of characteristic zero, $T$ an indeterminate, $\Omega$ an algebraic closure of $k(T)$, and $\overline{k}$ the algebraic closure of $k$ in $\Omega$.

\subsubsection{Generalities} \label{sssec:basics_1.1}

For more on the material below, we mostly refer to \cite[\S2.1]{DG12}.

A {\it{$k$-cover}} of $\mathbb{P}^1$ is a finite and generically unramified morphism $f:X \rightarrow \mathbb{P}^1$ defined over $k$, with $X$ a normal and irreducible $k$-curve. We make no distinction between a $k$-cover $f : X \rightarrow \mathbb{P}^1$ and the associated function field extension $E/k(T)$ (with $E \subseteq \Omega$): $f$ is the normalization of $\mathbb{P}^1$ in $E$ and $E$ is the function field $k(X)$ of $X$. The $k$-cover $f:X\rightarrow \mathbb{P}^1$ is said to be {\it{regular}} if $E$ is a regular extension of $k$ (i.e., if $E \cap \overline k=k$) or, equivalently, if $X$ is geometrically irreducible. We also say that the $k$-cover $f:X \rightarrow \mathbb{P}^1$ is {\it{Galois}} if $E/k(T)$ is. If, in addition, $G$ denotes the Galois group of $E/k(T)$, we say that $f$ is a {\it{$k$-$G$-cover}}. 

Fix a regular $k$-cover $f : X \rightarrow \mathbb{P}^1$ and denote its function field extension by $E/k(T)$.

A point $t_0 \in \mathbb{P}^1(\overline{k})$ is a {\it{branch point}} of $f$ (or of $E/k(T)$) if the prime ideal of $\overline{k}[T-t_0]$ generated by $T-t_0$ ramifies in the integral closure of $\overline{k}[T-t_0]$ in the compositum $\widehat{E}\overline{k}$ of $\widehat{E}$ and $\overline{k}(T)$ inside $\Omega$ (set $T-t_0 = 1/T$ if $t_0=\infty$), where $\widehat{E}$ denotes the Galois closure of $E$ over $k(T)$ inside $\Omega$. There are only finitely many branch points, denoted by $t_1, \dots, t_r$.

Suppose $f$ is Galois and set $G={\rm{Gal}}(E/k(T))$. Say that $E/k(T)$ is a {\it{regular $G$-extension}}.

Denote the $k$-fundamental group of $\mathbb{P}^1 \setminus \{t_1, \dots,t_r\}$ by $\pi_1(\mathbb{P}^1 \setminus \{t_1, \dots,t_r\}, t)_k$, where $t \in \mathbb{P}^1(\overline{k}) \setminus \{t_1, \dots,t_r\}$ is a base point. To the regular $k$-$G$-cover $f : X \rightarrow \mathbb{P}^1$ corresponds an epimorphism $\phi : \pi_1(\mathbb{P}^1 \setminus \{t_1, \dots,t_r\}, t)_k \rightarrow G$ whose restriction  to the $\overline{k}$-fundamental group $\pi_1(\mathbb{P}^1 \setminus \{t_1, \dots,t_r\}, t)_{\overline{k}}$ remains surjective.

Every $t_0 \in \mathbb{P}^1(k) \setminus \{t_1, \dots, t_r\}$ yields a section $s_{t_0} : {\rm{G}}_k \rightarrow \pi_1(\mathbb{P}^1 \setminus \{t_1, \dots,t_r\}, t)_k$ to the exact sequence
$$1 \rightarrow \pi_1(\mathbb{P}^1 \setminus \{t_1, \dots,t_r\}, t)_{\overline{k}} \rightarrow \pi_1(\mathbb{P}^1 \setminus \{t_1, \dots,t_r\}, t)_k \rightarrow {\rm{G}}_k \rightarrow 1,$$
which is uniquely defined up to conjugation by an element of $\pi_1(\mathbb{P}^1 \setminus \{t_1, \dots,t_r\}, t)_{\overline{k}}$. The homomorphism $\phi \circ s_{t_0} : {\rm{G}}_k \rightarrow G$ is denoted by $f_{t_0}$ and called the {\it{specialization morphism}} of $f$ at $t_0$. The fixed field in $\overline{k}$ of ${\rm{ker}}(f_{t_0})$ is the residue field at some prime ideal $\mathfrak{p}$ lying over the prime ideal of ${k}[T-t_0]$ generated by $T-t_0$ in the extension $E/k(T)$ \footnote{This does not depend on the choice of $\mathfrak{p}$ as the extension $E/k(T)$ is Galois.}. We denote it by $E_{t_0}$ and call the extension $E_{t_0}/k$ the {\it{specialization}} of $E/k(T)$ at $t_0$. The Galois group of $E_{t_0}/k$ is the decomposition group of $E/k(T)$ at a prime $\mathfrak{p}$ as above.

Let us define the following two sets:
$${\rm{Sp}}(f)=\{f_{t_0} : {\rm{G}}_k \rightarrow G \, \, : \, \, t_0 \in \mathbb{P}^1(k) \setminus \{t_1, \dots,t_r\} \},$$
$${\rm{Sp}}(E) = \{E_{t_0} /k \, \, : \, \, t_0 \in \mathbb{P}^1(k) \setminus \{t_1, \dots,t_r\}\}.$$

As a special case of Definition \ref{def:density_general}, say that $d \in [0,1]$ is the {\it{density}} of the set ${\rm{Sp}}(f)$ if
$$\dfrac{|{\rm{Sp}}(f) \cap \overline{\mathcal{S}}(G,x)|}{|\overline{\mathcal{S}}(G,x)|}$$
tends to $d$ as $x$ tends to $\infty$. We define analogously the density of the set ${\rm{Sp}}(E)$. Note that the set ${\rm{Sp}}(f)$ is of density $0$ if and only if the set ${\rm{Sp}}(E)$ is. 

Recall that $E/k(T)$ is {\it{parametric}} if every $G$-extension of $k$ lies in the set ${\rm{Sp}}(E)$, and that $E/k(T)$ is {\it{generic}} if $Ek'/k'(T)$ is parametric for every overfield $k' \supseteq k$.

\subsubsection{Ramification in specializations}

We review a well-known result relating the ramification of $f$ to that of its specializations. Keep the notation from \S\ref{sssec:basics_1.1} and take $k=\Qq$.

The {\it{minimal polynomial}} of $t=[a : b] \in  \mathbb{P}^1(\overline{\qq})$ is the unique (up to sign) homogeneous polynomial $P(U,V)\in \zz[U,V]$ defined as follows. If $b=0$, set $P(U,V) = V$. Otherwise, let $P(U,V)$ be the homogenization of the irreducible polynomial in $\zz[U]$ with root $a/b$. Given a prime number $p$, say that $t$ is {\it{$p$-integral}} if $p$ divides neither the coefficient of the leading $U$-term nor of the leading $V$-term of $P(U,V)$. If $t_0$ is in $\mathbb{P}^1(\qq) \setminus \{t\}$, set $t_0=[a':b']$
with $a'$ and $b'$ coprime integers. Define $I_p(t_0, t)$ as the $p$-adic valuation of $P(a', b')$ (if $t$ is $p$-integral).

The theorem below is an immediate consequence of a fundamental result of Beckmann \cite[Proposition 4.2]{Bec91} (see also \cite[\S2.2]{Leg16}):

\begin{theorem} \label{beckmann}
For every prime number $p$, not in some finite set $\mathcal{S}_{\rm{exc}}$ depending only on $f$, and every $t_0 \in \mathbb{P}^1(\Qq) \setminus \{t_1,\dots, t_r\}$, the following two conclusions hold.

\vspace{0.5mm}

\noindent
{\rm{(a)}} If $p$ ramifies in the specialization $E_{t_0}/\Qq$, then $I_{p}(t_0,t_{i})>0$ for some (necessarily unique up to $\Qq$-conjugation) ${i} \in \{1, \dots, r\}$.

\vspace{0.5mm}

\noindent
{\rm{(b)}} If $I_{p}(t_0,t_{i})>0$, then the inertia group of $E_{t_0}/\Qq$ at $p$ is conjugate in $G$ to $\langle \tau_{i}^{I_{p}(t_0,t_{i})} \rangle$, with $\tau_{i}$ a generator of an inertia subgroup of $E\overline{\Qq} / \overline{\Qq}(T)$ at the prime ideal generated by $T-t_{i}$.
\end{theorem}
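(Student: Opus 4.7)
The plan is to reduce directly to Beckmann's original statement \cite[Proposition 4.2]{Bec91} (and the number-field reformulation in \cite[\S2.2]{Leg16}), the only real work being to match our $I_p(t_0,t_i)$ with the intersection-multiplicity quantity that appears there.

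First I would fix $\mathcal{S}_{\rm{exc}}$ to contain: (i) the primes dividing $|G|$; (ii) the ``Beckmann bad primes'' of $f$, i.e., those at which the cover fails to have good reduction; and (iii) the primes at which two branch points from different $\qq$-conjugacy classes collide modulo $p$, or at which some branch point $t_i$ fails to be $p$-integral. Concretely, (iii) amounts to throwing in the primes dividing the resultant of the minimal polynomials of any two non-conjugate branch points, together with the primes appearing in their leading coefficients. This set is finite and depends only on $f$.

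Next I would unwind the definition of $I_p(t_0,t_i)$: writing $t_0=[a':b']$ with $\gcd(a',b')=1$ and $P_i(U,V)$ for the minimal polynomial of $t_i$, the value $v_p(P_i(a',b'))$ is exactly the intersection multiplicity, on $\mathbb{P}^1_{\zz_{(p)}}$, of the horizontal section attached to $t_0$ with the closed subscheme cut out by $P_i$. For $p \notin \mathcal{S}_{\rm{exc}}$ the $p$-integrality hypotheses needed to make this identification clean are automatic. Under this dictionary, Beckmann's hypothesis that $t_0$ and $t_i$ meet modulo $p$ translates to $I_p(t_0,t_i)>0$, and Beckmann's intersection exponent becomes $I_p(t_0,t_i)$ itself.

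With this translation in place, the two assertions are just read off Beckmann's theorem. For (a): if $p$ ramifies in $E_{t_0}/\qq$, Beckmann forces $t_0$ to meet some branch point $t_i$ modulo $p$, i.e.\ $I_p(t_0,t_i)>0$; the choice of $\mathcal{S}_{\rm{exc}}$ rules out any second, non-conjugate $t_j$ doing the same, giving the uniqueness up to $\qq$-conjugation. For (b): Beckmann's inertia formula states that the inertia subgroup of $E_{t_0}/\qq$ at $p$ is generated, up to $G$-conjugation, by the $I_p(t_0,t_i)$-th power of a generator of a geometric inertia group at $t_i$, which is exactly $\langle \tau_i^{I_p(t_0,t_i)}\rangle$. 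The only mild technical point is the behaviour at $t_i=\infty$ (or $t_0=\infty$), where one has to track the homogenization carefully in the definition of the minimal polynomial; this is routine and is the main bookkeeping obstacle in writing out the translation, but does not require any new idea beyond Beckmann.
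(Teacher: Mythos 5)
Your proposal is correct and follows the same route as the paper, which simply invokes Beckmann's Proposition 4.2 (via its reformulation in \cite[\S2.2]{Leg16}) and treats the statement as an immediate consequence; your write-up merely makes explicit the choice of $\mathcal{S}_{\rm{exc}}$ and the dictionary between $I_p(t_0,t_i)$ and Beckmann's intersection multiplicity, which is exactly the intended translation.
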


\subsection{Superelliptic curves} \label{ssec:basics_2}

Let $n$ and $N$ be integers with $n \geq 2$ and $N \geq 1$. Set $N=qn+r$, with $q \geq 0$ and $0 \leq r \leq n-1$. Let 
$P(T)=a_0 + a_1 T + \cdots + a_{N-1} T^{N-1} + a_N T^N$ be in $\mathcal{P}(n,N)$.

\subsubsection{The case where $n$ divides $N$}

First, assume $r=0$. Consider this equivalence relation on $\overline{\Qq}^3 \setminus \{(0,0,0)\}$:
$(y_1,t_1,z_1) \sim (y_2,t_2,z_2)$
iff $(y_2,t_2,z_2) = (\lambda^{N/n}y_1,\lambda t_1, \lambda z_1)$ for some $\lambda \in \overline{\Qq} \setminus \{0\}$. The quotient space $(\overline{\Qq}^3 \setminus \{(0,0,0)\} )/ \sim$ is a weighted projective space, denoted by $\mathbb{P}_{N/n,1,1}(\overline{\Qq}).$ Given $(y,t,z) \in \overline{\Qq}^3 \setminus \{(0,0,0)\}$, the corresponding point in $\mathbb{P}_{N/n,1,1}(\overline{\Qq})$ is denoted by $[y:t:z].$ Set
$$P(T,Z)= a_0 Z^N + a_1 Z^{N-1} T + \cdots + a_{N-1} Z T^{N-1} +a_N T^N.$$
The equation $Y^n= P(T,Z)$ in $\mathbb{P}_{N/n,1,1}(\overline{\Qq})$ is {\it{the superelliptic\footnote{Here and in \S\ref{sssec:n_odd} below, say {\it{hyperelliptic}} if $n=2$.} curve associated with $P(T)$}}; we denote it by $C_{P(T)}.$ The set of all $\Qq$-rational points on $C_{P(T)}$, i.e., the set of all elements $[y:t:z] \in \mathbb{P}_{N/n,1,1}(\overline{\Qq})$ such that $(y,t,z) \in \Qq^3 \setminus \{(0,0,0)\}$ and $y^n=P(t,z)$, is denoted by 
$C_{P(T)}(\Qq).$ A point $[y:t:z] \in C_{P(T)}(\Qq)$ is {\it{trivial}} if $y=0$, and {\it{non-trivial}} otherwise. Equivalently, $[y:t:z] \in C_{P(T)}(\Qq)$ is trivial if $z \not=0$ and $P(t/z)=0$.

\subsubsection{The case where $n$ does not divide $N$} \label{sssec:n_odd}

Now, we consider the case $r \geq 1$, which is in fact similar to the previous one. However, to avoid confusion, we state it in details.

Consider this equivalence relation on $\overline{\Qq}^3 \setminus \{(0,0,0)\}$:
$(y_1,t_1,z_1) \sim (y_2,t_2,z_2)$
if and only if $(y_2,t_2,z_2) = (\lambda^{(N+n-r)/n}y_1,\lambda t_1, \lambda z_1)$ for some $\lambda \in \overline{\Qq} \setminus \{0\}$. The quotient space $(\overline{\Qq}^3 \setminus \{(0,0,0)\} )/ \sim$ is a weighted projective space, denoted by $\mathbb{P}_{(N+n-r)/n,1,1}(\overline{\Qq}).$ Given $(y,t,z) \in \overline{\Qq}^3 \setminus \{(0,0,0)\}$, the corresponding point in $\mathbb{P}_{(N+n-r)/n,1,1}(\overline{\Qq})$ is denoted by $[y:t:z].$ Set
$$P(T,Z)= a_0 Z^{(q+1)n} + a_1 Z^{(q+1)n-1} T + \cdots + a_{N-1} Z^{n-r+1} T^{N-1} +a_N Z^{n-r} T^N.$$
The equation $Y^n= P(T,Z)$ in $\mathbb{P}_{(N+n-r)/n,1,1}(\overline{\Qq})$ is {\it{the superelliptic curve associated with $P(T)$}}; we denote it by $C_{P(T)}.$ The set of all $\Qq$-rational points on $C_{P(T)}$, i.e., the set of all points $[y:t:z] \in \mathbb{P}_{(N+n-r)/n,1,1}(\overline{\Qq})$ such that $(y,t,z) \in \Qq^3 \setminus \{(0,0,0)\}$ and $y^n= P(t,z)$, is denoted by 
$C_{P(T)}(\Qq).$ A point $[y:t:z] \in C_{P(T)}(\Qq)$ is {\it{trivial}} if $y=0$, and {\it{non-trivial}} otherwise. Equivalently, $[y:t:z] \in C_{P(T)}(\Qq)$ is trivial if either $z=0$ (this point, which is $[0:1:0]$, is the point at $\infty$) or $z \not=0$ and $t/z$ is a root of $P(T)$. 

\subsubsection{Extra notation}

We use the following notation:

\vspace{1mm}

\noindent
(a) $\mathcal{N}_n(P(T))$: subset of $\mathcal{N}_n$ defined by the extra condition that the ``twisted" superelliptic curve $C_{d \cdot P(T)}: y^n=d \cdot P(t)$ has a non-trivial $\Qq$-rational point,

\vspace{1mm}

\noindent
(b) $\mathcal{N}_n(P(T),x) = \mathcal{N}_n(P(T)) \cap \mathcal{N}_n(x)$ ($x \geq 1$).

\subsection{On the quadratic case} \label{ssec:basics_3}

The following elementary proposition, which gives an explicit description of the set of branch points and characterizes specializations of a given regular $\Zz/2\Zz$-extension of $\Qq(T)$, will be needed in the sequel. See, e.g., \cite[\S8]{KL18} for a proof.

\begin{proposition} \label{prop:Z/2Z}
Let $N \geq 1$ be an integer and $P(T) \in \mathcal{P}(2,N)$. Denote the roots of $P(T)$ by $t_1, \dots, t_N$ and the field $\Qq(T)(\sqrt{P(T)})$ by $E$. 

\vspace{0.5mm}

\noindent
{\rm{(a)}} The set of branch points of $E/\Qq(T)$ is either the set $\{t_1,\dots,t_N\}$ (if $N$ is even) or the set $\{t_1,\dots,t_N\} \cup \{ \infty \}$ (if $N$ is odd).

\vspace{0.5mm}

\noindent
{\rm{(b)}} Let $d$ be in $\mathcal{N}_2$. Then the $\Zz/2\Zz$-extension $\Qq(\sqrt{d})/\Qq$ is in ${\rm{Sp}}(E)$ if and only if the twisted hyperelliptic curve $C_{d \cdot P(T)}: y^2 = d \cdot P(t)$ has a non-trivial $\Qq$-rational point.
\end{proposition}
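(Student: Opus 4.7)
My proof plan would be as follows. For part (a), I would use that $E \otimes_\Qq \overline{\Qq} = \overline{\Qq}(T)(\sqrt{P(T)})$ together with the standard description of ramification in quadratic radical extensions. Since the roots $t_1,\dots,t_N$ of $P(T)$ are simple by the definition of $\mathcal{P}(2,N)$, we can factor $P(T)=a_N\prod_{i=1}^N(T-t_i)$ over $\overline{\Qq}$. A finite prime $T-t_0$ of $\overline{\Qq}(T)$ ramifies in $\overline{\Qq}(T)(\sqrt{P(T)})/\overline{\Qq}(T)$ exactly when the $(T-t_0)$-adic valuation of $P(T)$ is odd, which, as the roots are simple, occurs iff $t_0\in\{t_1,\dots,t_N\}$. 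For the prime at infinity (local parameter $1/T$), the valuation of $P(T)$ is $-N$, which is odd iff $N$ is odd. Part (a) follows.

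For part (b), write $E$ as the splitting field over $\Qq(T)$ of the separable polynomial $Y^2-P(T)$. For $t_0 \in \Qq\setminus\{t_1,\dots,t_N\}$, we have $P(t_0)\not=0$, so the specialization at $t_0$ is $E_{t_0}=\Qq(\sqrt{P(t_0)})$ by the general fact recalled in the introduction. Two quadratic fields $\Qq(\sqrt{d})$ and $\Qq(\sqrt{P(t_0)})$ coincide iff $d\cdot P(t_0)\in(\Qq^\times)^2$, i.e., $d\cdot P(t_0)=y^2$ for some $y\in\Qq^\times$. This directly produces a non-trivial $\Qq$-rational point $[y:t_0:1]$ on $C_{d\cdot P(T)}$; conversely, any non-trivial $\Qq$-point on $C_{d\cdot P(T)}$ with $z\not=0$ rescales (using the weighted action) to a pair $(y,t)\in\Qq^2$ with $y\not=0$, whence $P(t)\not=0$ (so $t\notin\{t_1,\dots,t_N\}$) and $d\cdot P(t)=y^2$ is a non-zero square, giving $E_t=\Qq(\sqrt{d})\in{\rm{Sp}}(E)$.

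The only remaining issue is a non-trivial $\Qq$-rational point on $C_{d\cdot P(T)}$ with $z=0$. If $N$ is odd (the $n\nmid N$ case with $n=2$), inspection of the homogenization shows that every monomial of $P(T,Z)$ is divisible by $Z$, so $z=0$ forces $y=0$: such points are trivial and are ruled out. If $N$ is even (the $n\mid N$ case), a non-trivial point with $z=0$ reduces, after scaling $t$ to $1$, to a solution $y\in\Qq^\times$ of $y^2=d\cdot a_N$, i.e., to $d\cdot a_N$ being a non-zero square; by part (a), $\infty$ is then \emph{not} a branch point, and a standard local analysis (via $\sqrt{P(T)}/T^{N/2}$ as $T\to\infty$) identifies $E_\infty=\Qq(\sqrt{a_N})$, so that this case is again $\Qq(\sqrt{d})\in{\rm{Sp}}(E)$. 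The main obstacle is therefore not conceptual but bookkeeping: keeping the two parities of $N$, their respective weighted projective models $\mathbb{P}_{N/2,1,1}$ and $\mathbb{P}_{(N+1)/2,1,1}$, and the special role of $t_0=\infty$ consistent throughout. With this done, the proposition reduces to the elementary manipulations above.
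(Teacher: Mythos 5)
Your proof is correct; the paper itself does not include an argument for this proposition but simply cites \cite[\S 8]{KL18}, and your computation (odd-valuation criterion for ramification in a quadratic radical extension, $E_{t_0}=\Qq(\sqrt{P(t_0)})$, and the square-class comparison $d\cdot P(t_0)\in(\Qq^\times)^2$) is exactly the standard route. You also correctly isolate the only delicate point, namely the $z=0$ points on the weighted model and their matching with the specialization at $\infty$ when $N$ is even, so nothing is missing.
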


Given an indeterminate $T$, there is a natural bijection $f$ between the set of all regular $\Zz/2\Zz$-extensions of $\Qq(T)$ and the set of all separable polynomials $P(T) \in \Zz[T]$ with squarefree content. Then define the {\it{height}} of a given regular $\Zz/2\Zz$-extension $E/\Qq(T)$ as the height of the associated polynomial $P_E(T)$. Moreover, by Proposition \ref{prop:Z/2Z}(a), if $r$ is a positive even integer, then $E/\Qq(T)$ has $r$ branch points if and only if $P_E(T)$ has degree $r$ or $r-1$.

Given positive integers $r$ and $H$ with $r$ even, we use the following notation:

\vspace{1mm}

\noindent
(a) $\mathcal{E}(r)$: set of all regular $\Zz/2\Zz$-extensions of $\Qq(T)$ with $r$ branch points,

\vspace{1mm}

\noindent
(b) $\mathcal{E}(r,H)$: subset of $\mathcal{E}(r)$ defined by the extra condition that the height is at most $H$.

\begin{proposition} \label{prop:card}
Given an even positive integer $r$, there exists a constant $\alpha(r) > 0$ such that
\begin{equation} \label{eq:card1}
|\mathcal{E}(r,H)| \sim | \mathcal{P}_2(2,r,H) | \sim \alpha(r) \cdot H^{r+1}, \quad H \to \infty,
\end{equation}
\begin{equation} \label{eq:card2}
\frac{|\mathcal{E}(r,H)| - | \mathcal{P}_2(2,r,H) |}{|\mathcal{E}(r,H)|} = O \bigg(\frac{1}{H} \bigg), \quad H \to \infty.
\end{equation}
\end{proposition}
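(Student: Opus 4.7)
The plan is to reduce the proposition to a counting lemma for integer polynomials and then apply Möbius inversion. By the natural bijection recalled just before the statement, a regular $\zz/2\zz$-extension $E/\qq(T)$ corresponds to a separable polynomial $P_E(T) \in \zz[T]$ with squarefree content, and by Proposition \ref{prop:Z/2Z}(a) such an $E$ lies in $\mathcal{E}(r)$ iff $\deg P_E \in \{r-1, r\}$. Since the height of $E/\qq(T)$ is by definition the height of $P_E$, one obtains the disjoint decomposition
$$|\mathcal{E}(r,H)| = |\mathcal{P}_2(2,r,H)| + |\mathcal{P}_2(2,r-1,H)|.$$
It therefore suffices to establish, for every integer $N \ge 1$, the sharp asymptotic
$$|\mathcal{P}_2(2,N,H)| = \frac{2^{N+1}}{\zeta(2(N+1))} \, H^{N+1} + O(H^N), \qquad H \to \infty,$$
and then to set $\alpha(r) := 2^{r+1}/\zeta(2(r+1))$, which is positive.

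To prove the core asymptotic, I would proceed in three substeps. First, the number of tuples $(a_0, \ldots, a_N) \in \zz^{N+1}$ with $|a_i| \le H$ for all $i$ and $a_N \neq 0$ is $(2H+1)^N \cdot 2H = 2^{N+1} H^{N+1} + O(H^N)$. Second, the non-separable polynomials among these are cut out by the vanishing of a single nonzero polynomial (the discriminant) in the coefficients, hence lie on a proper Zariski-closed subset of $\mathbb{A}^{N+1}$; a standard lattice point argument shows that they contribute only $O(H^N)$. Third, the squarefree-content restriction is handled by Möbius inversion: writing $N_d(H)$ for the number of separable degree-$N$ integer polynomials of height $\le H$ with $d^2 \mid a_i$ for all $i$, one has
$$|\mathcal{P}_2(2,N,H)| = \sum_{d \ge 1} \mu(d) \, N_d(H).$$
For $d^2 \le H$, the substitution $a_i = d^2 b_i$ identifies such polynomials with degree-$N$ integer polynomials of height $\le H/d^2$, yielding $N_d(H) = 2^{N+1}(H/d^2)^{N+1} + O((H/d^2)^N)$ (the separability defect being again $O((H/d^2)^N)$); for $d^2 > H$ one has $N_d(H) = 0$ since then $a_N = 0$. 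Summation, combined with $\sum_{d \ge 1} \mu(d)/d^{2(N+1)} = 1/\zeta(2(N+1))$ and the absolute convergence of $\sum_d 1/d^{2N}$ for $N \ge 1$, produces the asserted main term and error.

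Finally, specialising the asymptotic to $N = r$ and (since $r \ge 2$) to $N = r - 1 \ge 1$ yields $|\mathcal{P}_2(2,r,H)| = \alpha(r) H^{r+1} + O(H^r)$ and $|\mathcal{P}_2(2,r-1,H)| = O(H^r)$. Adding these proves $|\mathcal{E}(r,H)| = \alpha(r) H^{r+1} + O(H^r)$, which establishes \eqref{eq:card1}. Assertion \eqref{eq:card2} is then immediate: its numerator equals $|\mathcal{P}_2(2,r-1,H)| = O(H^r)$ while the denominator is $\alpha(r) H^{r+1} + O(H^r)$, so the ratio is $O(1/H)$. The main (modest) obstacle is the uniform bookkeeping of errors across the Möbius inversion; this is settled by truncating at $d \le \sqrt{H}$ and checking that both the tail $\sum_{d > \sqrt{H}} \mu(d) H^{N+1}/d^{2(N+1)}$ and the full error sum are absorbed into $O(H^N)$.
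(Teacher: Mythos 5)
Your proof is correct and follows the same route as the paper: the identity $|\mathcal{E}(r,H)| = |\mathcal{P}_2(2,r,H)| + |\mathcal{P}_2(2,r-1,H)|$ via Proposition \ref{prop:Z/2Z}(a), the asymptotic $|\mathcal{P}_2(2,r,H)| \sim \alpha(r)H^{r+1}$, and the observation that the degree-$(r-1)$ contribution is $O(H^r)$. The only difference is that the paper simply cites \cite[Lemma 5.8]{Leg18b} for the asymptotic, whereas you prove it from scratch (discriminant hypersurface plus M\"obius inversion over the content), which has the side benefit of identifying the constant as $\alpha(r)=2^{r+1}/\zeta(2r+2)$.
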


\begin{proof}
Given $H \geq 1$, Proposition \ref{prop:Z/2Z}(a) shows that  
\begin{equation} \label{1.1}
|\mathcal{E}(r,H)|= | \mathcal{P}_2(2,r,H) | + | \mathcal{P}_2(2,r-1,H) |
\end{equation}
By \cite[Lemma 5.8]{Leg18b}, one has
\begin{equation} \label{1.2}
| \mathcal{P}_2(2,r,H) | \sim \alpha(r) \cdot H^{r+1}, \quad H \to \infty,
\end{equation}
for some positive constant $\alpha(r)$ and, clearly, one has
\begin{equation} \label{1.3}
| \mathcal{P}_2(2,r-1,H) |= O(H^r), \quad H \to \infty.
\end{equation}
It then remains to combine \eqref{1.1}, \eqref{1.2}, and \eqref{1.3} to get \eqref{eq:card1} and \eqref{eq:card2}, as needed.
\end{proof}

\section{Conditional results} \label{sec:densityI}

This section is devoted to our conditional results which assert that the specialization set of a regular $\qq$-$G$-cover of $\mathbb{P}^1$ with sufficiently many branch points has density zero.

We need some notation, in addition to that from \S\ref{sec:basics}. Let $G$ be a non-trivial finite group and $f:X\to \mathbb{P}^1$ a regular $\qq$-$G$-cover. We denote the associated regular $G$-extension by $E/\Qq(T)$. Let $S = \{t_1,\dots, t_r\} \subseteq \mathbb{P}^1(\overline{\qq})$ be a non-empty subset of the set of all branch points of $f$, closed under the action of G$_\Qq$. Denote the ramification index of $t_i$ by $e_i$, $i=1, \dots,r$, and set $e_0 = \min \{e_1,\dots,e_r\} $. Let $q_0$ be the smallest prime dividing one of the $e_i$'s and $p$ the smallest prime divisor of $|G|$.

\subsection{A conditional upper bound} \label{ssec:upbound}

This more precise version of Theorem \ref{thm:intro_1} gives an upper bound for $|{\rm{Sp}}(f) \cap \overline{\mathcal{S}}(G,x)|$, provided $r$ is large enough and the abc-conjecture holds:

\begin{theorem} \label{thm:abc_spec}
Assume the abc-conjecture holds and
\begin{equation} \label{eq1}
r > 2+\frac{2}{q_0-1}.
\end{equation}
Then, for every $\epsilon>0$ and every sufficiently large integer $x$, one has 
$$|{\rm{Sp}}(f) \cap  \overline{\mathcal{S}}(G,x)| \leq x^{e + \epsilon},$$
where
\begin{equation} \label{eq1.5}
e={2} \cdot |G|^{-1} \cdot \bigg(1-\frac{1}{e_0} \bigg)^{-1} \cdot \bigg(r-2-\frac{2}{q_0-1} \bigg)^{-1}.
\end{equation}
\end{theorem}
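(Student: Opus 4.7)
The plan is to combine Theorem \ref{beckmann} with an abc-based radical lower bound on polynomial values, so as to establish the pointwise discriminant estimate
\[
|d_{E_{t_0}}| \;\gg\; H(t_0)^{2/e - \epsilon}
\]
for almost every $t_0 \in \mathbb{P}^1(\qq)$ that is not a branch point of $f$; here, writing $t_0 = [a':b']$ in lowest terms, $H(t_0) = \max(|a'|, |b'|)$. Since the number of such $t_0$ with $H(t_0) \leq H_0$ is $O(H_0^2)$, taking $H_0 = x^{e/2 + \epsilon}$ then yields $|\mathrm{Sp}(f) \cap \overline{\mathcal{S}}(G, x)| \leq x^{e + \epsilon}$, as claimed. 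Different $t_0$'s may of course give rise to the same element of $\overline{\mathcal{S}}(G, x)$, but this overcount only helps for an upper bound.

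To prove the discriminant lower bound, set $F = E_{t_0}$. For each $\mathrm{G}_{\qq}$-orbit $O$ of elements of $S$, let $P_O(U, V) \in \zz[U, V]$ be the homogenized minimal polynomial of any representative of $O$ (of degree $|O|$), and put $k_{p, O} = v_p(P_O(a', b'))$. By Theorem \ref{beckmann}, outside a finite set of primes depending only on $f$, ramification at $p$ in $F/\qq$ arising from $S$ forces $k_{p, O} \geq 1$ for a unique orbit $O$, and then the inertia subgroup at $p$ has order $e_O / \gcd(k_{p, O}, e_O)$, so $v_p(d_F) = |G|(1 - \gcd(k_{p, O}, e_O)/e_O)$ at tame $p$. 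The key observation is that when $1 \leq k_{p,O} \leq q_0 - 1$, the gcd is forced to be $1$ (any common factor would be divisible by some prime dividing $e_O$, hence by a prime $\geq q_0$), and so $v_p(d_F) = |G|(1 - 1/e_O) \geq |G|(1 - 1/e_0)$. Setting $R_O = \prod_{p \,:\, 1 \leq k_{p, O} \leq q_0 - 1} p$, one deduces
\[
\log |d_F| \;\geq\; |G|\bigl(1 - 1/e_0\bigr) \sum_{O} \log R_O \;-\; O(1).
\]

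The final and most substantive input is an abc-based uniform lower bound of the form
\[
\prod_O R_O \;\gg\; H(t_0)^{\,r - 2 - 2/(q_0 - 1) - \epsilon},
\]
valid outside a thin exceptional set of $t_0$. The ``$r - 2$'' exponent is the classical Stewart--Tijdeman--Granville radical bound applied to the binary form $\prod_O P_O$, of degree $r$ and with exactly $r$ distinct roots in $\overline{\qq}$. The corrective ``$-2/(q_0 - 1)$'' removes the contribution of primes counted in $\mathrm{rad}\bigl(\prod_O P_O(a', b')\bigr)$ but not in $\prod_O R_O$, namely those lying in the $q_0$-full part of $\prod_O P_O(a', b')$; a bound on that part is extracted from abc by applying it to an auxiliary equation that encodes a $q_0$-power decomposition. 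Plugging this into the previous display gives $|d_F| \gg H(t_0)^{|G|(1 - 1/e_0)(r - 2 - 2/(q_0 - 1)) - \epsilon} = H(t_0)^{2/e - \epsilon}$ by \eqref{eq1.5}, whose exponent is positive exactly under hypothesis \eqref{eq1}, completing the plan. I expect the hardest step to be this last abc-based radical estimate, and especially the $q_0$-full correction, which refines in arbitrary degree an argument implicit in Granville's treatment of the quadratic case $q_0 = 2$; all the other ingredients are relatively direct given Beckmann's theorem and standard height counting over $\mathbb{P}^1(\qq)$.
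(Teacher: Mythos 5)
Your proposal is correct and follows essentially the same route as the paper: Beckmann's theorem turns primes dividing the branch-point forms with multiplicity $<q_0$ into tamely ramified primes contributing $|G|(1-1/e_0)$ to $\log|d_{E_{t_0}}|$, the Elkies--Langevin--Granville radical bound gives the exponent $r-2-\epsilon$, and the height count over $\mathbb{P}^1(\Qq)$ finishes. The only (harmless) deviation is your suggestion that the $q_0$-full correction requires a separate abc input via an auxiliary equation: in fact it follows in one line from the same radical bound together with the trivial estimate $|P(u,v)|\ll \max\{|u|,|v|\}^r$, since ${\rm{rad}}(P(u,v))\le |P(u,v)|\cdot B_{\ge q_0}^{-(q_0-1)}$ forces $B_{\ge q_0}\ll \max\{|u|,|v|\}^{(2+\epsilon)/(q_0-1)}$.
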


\begin{remark} \label{rk:stup}
(a) The set $S$, which is implicit in Theorem \ref{thm:abc_spec} as well as in the next result can most conveniently be chosen to be the set of all branch points of $f$. However, in some situations, proper subsets yield stronger conclusions, notably if there are many branch points with large ramification index. From the proof of Theorem \ref{thm:abc_spec} (see \S\ref{ssec:proof}), considering several subsets at the same time (with the corresponding notation for each subset) can sometimes yield even stronger results. We refrain from explicitly stating such a version of Theorem \ref{thm:abc_spec}, to avoid unnecessarily complicated notation.

\vspace{0.5mm}

\noindent	
(b) Condition \eqref{eq1} holds if and only if one of these conditions is satisfied:

\vspace{0.5mm}

(1) $r \geq 5$,

\vspace{0.5mm}

(2) $r\ge 4$ and $q_0\ge 3$,

\vspace{0.5mm}

(3) $r\ge 3$ and $q_0 \ge 5$.
\end{remark}

\subsection{Explicit examples} \label{ssec:coros}

We now explain how deriving several explicit results with the conclusion that the set ${\rm{Sp}}(f)$ has density zero.

First, we combine the lower bound given by the Malle conjecture and the upper bound from Theorem \ref{thm:abc_spec} to obtain the following more precise version of Theorem \ref{thm:intro_2}:

\begin{corollary} \label{coro:abc+malle}
Assume the lower bound \eqref{conj:malle_lower} is fulfilled for the group $G$, the abc-conjecture holds, and the following condition is satisfied:
\begin{equation} \label{eq2}
r > 2 \bigg(\frac{q_0}{q_0-1} + \frac{(p-1)e_0}{p(e_0-1)} \bigg),
\end{equation}
Then one has $e < \alpha(G)$, where $e$ and $\alpha(G)$ are defined in \eqref{eq1.5} and \eqref{eq:intro_3}, respectively, and, for every $\epsilon >0$ and every sufficiently large $x$, one has
\begin{equation} \label{eq:rate}
\frac{|{\rm{Sp}}(f) \cap  \overline{\mathcal{S}}(G,x)|}{| \overline{\mathcal{S}}(G,x)|} = O(x^{e  - \alpha(G) + \epsilon}).
\end{equation}
In particular, the set ${\rm{Sp}}(f)$ has density 0.
\end{corollary}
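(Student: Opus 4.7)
The plan is to combine the conditional upper bound of Theorem \ref{thm:abc_spec} with the Malle lower bound \eqref{conj:malle_lower}, so the proof is essentially an algebraic verification that the hypothesis \eqref{eq2} is exactly what is needed for the exponent $e$ from \eqref{eq1.5} to be strictly smaller than $\alpha(G)$ from \eqref{eq:intro_3}.

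First I would check that \eqref{eq2} implies \eqref{eq1}, so that Theorem \ref{thm:abc_spec} may be invoked. This is immediate: since $e_0\ge 2$ and $p\ge 2$, the term $\frac{(p-1)e_0}{p(e_0-1)}$ is positive, so \eqref{eq2} yields $r>\frac{2q_0}{q_0-1}=2+\frac{2}{q_0-1}$, which is exactly \eqref{eq1}. Next, I would perform the algebraic manipulation to show \eqref{eq2} $\Longleftrightarrow$ $e<\alpha(G)$. Unwinding the definitions of $e$ and $\alpha(G)$, the inequality $e<\alpha(G)$ reads
\begin{equation*}
\frac{2}{|G|}\cdot\frac{e_0}{e_0-1}\cdot\frac{1}{r-2-\tfrac{2}{q_0-1}}<\frac{1}{|G|}\cdot\frac{p}{p-1},
\end{equation*}
which after clearing $|G|$ and rearranging is equivalent to
\begin{equation*}
r>2+\frac{2}{q_0-1}+\frac{2e_0(p-1)}{p(e_0-1)}=2\left(\frac{q_0}{q_0-1}+\frac{(p-1)e_0}{p(e_0-1)}\right),
\end{equation*}
i.e., to \eqref{eq2}. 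This proves the first assertion of the corollary.

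With the equivalence established, I would assemble the two bounds. By Theorem \ref{thm:abc_spec}, for every $\epsilon>0$ and every sufficiently large $x$,
\begin{equation*}
|{\rm Sp}(f)\cap \overline{\mathcal{S}}(G,x)|\le x^{e+\epsilon}.
\end{equation*}
On the other hand, \eqref{conj:malle_lower} gives $|\mathcal{S}(G,x)|\ge c(G)\,x^{\alpha(G)}$ for large $x$, and since by the definition of $\overline{\mathcal{S}}(G,x)$ in \S\ref{ssec:basics_0}(b) the cardinalities of $\overline{\mathcal{S}}(G,x)$ and $\mathcal{S}(G,x)$ differ by a positive multiplicative constant depending only on $G$, we obtain $|\overline{\mathcal{S}}(G,x)|\gg_G x^{\alpha(G)}$. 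Dividing the two estimates yields \eqref{eq:rate}. Finally, since we have just shown that $e<\alpha(G)$, choosing $\epsilon>0$ with $\epsilon<\alpha(G)-e$ makes the exponent $e-\alpha(G)+\epsilon$ negative, so the ratio tends to $0$ as $x\to\infty$, proving that ${\rm Sp}(f)$ has density zero.

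There is no genuine obstacle in this proof beyond the algebraic bookkeeping; the substantive work lies in Theorem \ref{thm:abc_spec} (and in the inputs from Malle's conjecture), both of which are available here as hypotheses. The only minor point to be careful about is making sure the transition from $\mathcal{S}(G,x)$ to $\overline{\mathcal{S}}(G,x)$ in the denominator does not alter the asymptotic order, which is handled by the remark in \S\ref{ssec:basics_0}(b) that the two counts agree up to a constant depending only on $G$.
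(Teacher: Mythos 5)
Your proof is correct and follows essentially the same route as the paper's: check that \eqref{eq2} implies \eqref{eq1} so that Theorem \ref{thm:abc_spec} applies, combine its upper bound with the Malle lower bound \eqref{conj:malle_lower} (via the comparability of $|\mathcal{S}(G,x)|$ and $|\overline{\mathcal{S}}(G,x)|$) to get \eqref{eq:rate}, and verify algebraically that \eqref{eq2} is equivalent to $e<\alpha(G)$. The only difference is that you carry out explicitly the algebra that the paper dismisses with ``clearly''.
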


\begin{proof}
First, note that \eqref{eq2} $\Rightarrow$ \eqref{eq1} as
$$2 \bigg(\frac{q_0}{q_0-1} + \frac{(p-1)e_0}{p(e_0-1)} \bigg) > \frac{2q_0}{q_0-1} = 2+ \frac{2}{q_0-1}.$$
Then, by Theorem \ref{thm:abc_spec} and since \eqref{conj:malle_lower} has been assumed to hold, \eqref{eq:rate} holds. To complete the proof, it suffices to check $e < \alpha(G)$. Clearly, this holds if and only if \eqref{eq2} is satisfied.
\end{proof}

\begin{remark} \label{rk:eq3}
Making use of the inequalities $2\le p\le q_0\le e_0$, one sees that \eqref{eq2} holds as soon as one of the following conditions is satisfied:

\vspace{0.5mm}

\noindent
(a) $r \geq 7$,

\vspace{0.5mm}

\noindent
(b) $r= 6$ and $e_0\ge 3$,

\vspace{0.5mm}

\noindent
(c) $r= 5$, $q_0 \ge 3$, and $(e_0,q_0,p)\ne (3,3,3)$,

\vspace{0.5mm}

\noindent
(d) $r= 4$ and $q_0>2p$.

\vspace{0.5mm}

\noindent
Conversely, since the right-hand side of \eqref{eq2} is bounded from below by $3$, Corollary \ref{coro:abc+malle} in its present form cannot yield conclusions about covers with 3 branch points.

Moreover, by the Riemann-Hurwitz formula, the cover $f$ has at least 7 branch points, provided $X$ is of genus at least $2|G|-1$. Consequently, we have this conditional statement:

\vspace{2mm}

\noindent
{\it{The specialization set of a given regular $\Qq$-$G$-cover of $\mathbb{P}^1$ of genus at least $2|G|-1$ is of density $0$, under the abc-conjecture and the lower bound \eqref{conj:malle_lower}.}}
\end{remark}

In Corollary \ref{coro:abc_explicit} below, we give several explicit situations where the conclusion of Corollary \ref{coro:abc+malle} holds, independently of the ramification data of $f$:

\begin{corollary} \label{coro:abc_explicit}
Suppose the abc-conjecture holds and one of these conditions is satisfied:

\vspace{0.5mm}

\noindent
{\rm{(a)}} $G$ has rank at least 6 and \eqref{conj:malle_lower} holds for the group $G$,

\vspace{0.5mm}

\noindent
{\rm{(b)}} $G$ has a cyclic quotient of order $\not \in \{1,2,3,4,5,6,8,10,12\}$ and $G$ fulfills \eqref{conj:malle_lower},

\vspace{0.5mm}

\noindent
{\rm{(c)}} $G$ is nilpotent of order divisible by a prime number $\geq 7$.

\vspace{0.5mm}

\noindent
Then the density of ${\rm{Sp}}(f)$ is 0.
\end{corollary}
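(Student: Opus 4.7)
The plan is to reduce each of the three cases to Corollary \ref{coro:abc+malle} by exhibiting, for a suitable Galois-closed subset $S$ of the branch points of $f$ (we are free to shrink $S$ as in Remark \ref{rk:stup}(a)), data $(|S|,e_0,q_0,p)$ that meet one of the explicit conditions in Remark \ref{rk:eq3}. The Malle lower bound \eqref{conj:malle_lower} for $G$ is assumed in (a) and (b); in (c) it is supplied for free by the Kl\"uners--Malle theorem \cite{KM04} for nilpotent groups recalled in the introduction.

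For (a), recall that $\pi_1(\mathbb{P}^1 \setminus \{t_1,\dots,t_r\}, t)_{\overline{\qq}}$ is topologically generated by $r-1$ elements, being the profinite completion of a free group of rank $r-1$ on the standard inertia generators modulo the product-one relation. Since this group surjects onto $G$, the rank of $G$ is at most $r-1$, so the hypothesis forces $r \geq 7$, and Remark \ref{rk:eq3}(a) closes the case with $S$ equal to the full branch locus.

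For (b), let $H$ be the kernel of a surjection $G \twoheadrightarrow C_n$ with $n \notin \{1,2,3,4,5,6,8,10,12\}$, equivalently $\phi(n) \geq 6$. The quotient $f'\colon X/H \to \mathbb{P}^1$ is a regular $\qq$-$C_n$-cover whose branch points form a Galois-closed subset of those of $f$ and whose ramification indices divide those of $f$. Writing the inertia generators of $f'$ additively as elements $a_1,\dots,a_{r'}$ of $\zz/n\zz$, the branch cycle lemma forces their multi-set to be stable under the $(\zz/n\zz)^*$-action by multiplication; since they must generate $\zz/n\zz$, the multi-set contains a full $(\zz/n\zz)^*$-orbit of order-$n$ elements, of size $\phi(n) \geq 6$. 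Thus $r' \geq 6$. If $r' \geq 7$, taking $S$ to be the branch points of $f'$ viewed inside $f$ triggers Remark \ref{rk:eq3}(a). If $r' = 6$ (which, since $\phi(n) = 6$, can only happen for $n \in \{7,9,14,18\}$), the multi-set is exactly that single orbit, consisting solely of elements of order $n$; hence each inertia of $f$ at a branch point of $f'$ has order a multiple of $n \geq 7$, so the choice $S = \{$branch points of $f'\}$ gives $|S|=6$ and $e_0 \geq 7 \geq 3$, activating Remark \ref{rk:eq3}(b).

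For (c), a nilpotent $G$ is the direct product of its Sylow subgroups, and every Sylow $p$-subgroup admits $C_p$ as a Frattini quotient; hence $G$ has $C_p$ as a quotient for each prime $p \mid |G|$. Choosing a prime $p \geq 7$ as in the hypothesis yields a cyclic quotient whose order lies outside $\{1,2,3,4,5,6,8,10,12\}$, reducing (c) to (b). The main technical delicacy of the whole argument is the subcase $r' = 6$ of (b): Remark \ref{rk:eq3} cannot be invoked from $r' = 6$ alone, and it is crucial to pass from $f'$-ramification to $f$-ramification, exploiting that the inertia orders in $G$ are multiples of those in $C_n$, in order to push the minimal ramification index $e_0$ for $f$ over $S$ past the threshold $3$ required by Remark \ref{rk:eq3}(b).
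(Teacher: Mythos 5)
Your overall strategy---reducing each case to Corollary \ref{coro:abc+malle} via Remark \ref{rk:eq3} by locating enough branch points of large enough ramification index in a cyclic quotient cover, with the Branch Cycle Lemma controlling the inertia data---is the paper's strategy, and parts (a) and (c) are handled correctly and essentially identically to the paper. The problem is the key step in (b). You assert that since the inertia generators $a_1,\dots,a_{r'}$ of the $\Zz/n\Zz$-quotient generate $\Zz/n\Zz$, their multiset ``contains a full $(\Zz/n\Zz)^*$-orbit of order-$n$ elements''. This is false whenever $n$ is not a prime power: a generating set of $\Zz/n\Zz$ need not contain any element of order $n$. Concretely, for $n=15$ take a regular $\Zz/3\Zz$-extension of $\Qq(T)$ with two (conjugate, non-rational) branch points and a regular $\Zz/5\Zz$-extension with four branch points and disjoint branch locus; their compositum is a regular $\Zz/15\Zz$-cover whose inertia multiset is $\{5,10\}\cup\{3,6,9,12\}$, which is $(\Zz/15\Zz)^*$-stable, satisfies the product-one relation ($45\equiv 0 \bmod 15$), generates $\Zz/15\Zz$, and contains no element of order $15$. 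This simultaneously refutes your claims that $r'\geq \phi(n)$ follows from the existence of an order-$n$ orbit, that $r'=6$ forces $n\in\{7,9,14,18\}$, and that in the $r'=6$ case the ramification indices of $f$ over those points are multiples of $n\geq 7$. (In this particular example the desired conclusion still holds, since all six indices are $\geq 3$ and Remark \ref{rk:eq3}(b) applies, but your argument does not establish that.)

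What is actually needed---and what the paper supplies---is a case analysis replacing the false orbit claim. The multiset of inertia generators is a union of full $(\Zz/n\Zz)^*$-orbits, i.e.\ of the sets of all elements of order $d$ (of size $\phi(d)$) for various divisors $d$ of $n$ whose least common multiple is $n$; one must check, for every $n\notin\{1,2,3,4,5,6,8,10,12\}$, that any such union has either at least $8$ (or $7$) elements or at least $6$ elements of order $\geq 3$. The paper does this by first disposing of all $n$ divisible by $16$, $9$, $25$ or a prime $\geq 7$, where the prime-power Branch Cycle Lemma count $\phi(p_0^{m})\geq 6$ suffices (and there your reasoning is valid, since generating a cyclic $p$-group does force an element of maximal order), and then treating the leftover composite values ($n=15$ and its multiples, and $n=20,24,40$) by hand. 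To close the gap you must either reproduce such a case analysis or prove in general that no union of orbits of sizes $\phi(d)$, with the $d$'s having lcm $n$, can have total size $\leq 6$ while containing an element of order $2$.
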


\begin{proof}
First, assume $G$ has rank $\geq 6$ and \eqref{conj:malle_lower} holds. Then, by the first condition and the {\it{Riemann existence theorem}}, $f$ has at least 7 branch points. Applying Corollary \ref{coro:abc+malle} and Remark \ref{rk:eq3} (with $S$ the set of all branch points of $f$) provides the desired conclusion. 

Now, assume $G$ has a cyclic quotient of order $\not \in \{1,2,3,4,5,6,8, 10, 12\}$ and $G$ fulfills \eqref{conj:malle_lower}. We shall make use of the following easy claim:

\vspace{2mm}

\noindent
{\it{Let $n$ be a positive integer $\not \in \{1,2,3,4,5,6,8, 10, 12\}$. Then every regular $\Qq$-$\Zz/n\Zz$-cover of $\mathbb{P}^1$ has either at least 8 branch points or at least 6 branch points of ramification index $\geq 3$.}}

\vspace{2mm}

\noindent
Under the claim, we may apply Corollary \ref{coro:abc+malle} and Remark \ref{rk:eq3} to get the desired conclusion.

We now prove the claim. If $p_0$ is a prime number and $m \geq 1$, recall that, as a classical con\-se\-quence of the {\it{Branch Cycle Lemma}} (see \cite{Fri77} and \cite[Lemma 2.8]{Vol96}), every regular $\Qq$-$\Zz/p_0^m\Zz$-cover of $\mathbb{P}^1$ has at least $p_0^{m} - p_0^{m-1}$ branch points of ramification index $p_0^m$ \footnote{Indeed, at least one such branch point must exist since the inertia groups at branch points generate $\Zz/p_0^m \Zz$. By the Branch Cycle Lemma, we obtain at least $\varphi(p_0^m) = p_0^{m} - p_0^{m-1}$ such branch points, where $\varphi$ denotes the Euler totient function. See, e.g., \cite[Proposition 3.1.19]{Deb09} for more details.}. Consequently, the claim already holds if $n$ is divisible by 16, 9, 25 or a prime number $p_0 \geq 7$. For the case $n=15$, let $g$ be a regular $\Qq$-$\Zz/15\Zz$-cover of $\mathbb{P}^1$. Then either $g$ has no branch point of ramification index 15, in which case $g$ has at least 6 branch points of ramification index $\geq 3$ (at least 2 coming from the subcover of degree 3 and at least 4 from that of degree 5), or $g$ has at least one branch point of ramification index $15$, in which case $g$ has in fact at least 8 such branch points by the Branch Cycle Lemma. In particular, the claim holds if $n$ is divisible by 15. As to the remaining cases $n=20, 24, 40$, one treats them as the case $n=15$.

Finally, (c) is a special case of (b). Indeed, if $G$ is nilpotent of order divisible by a prime $q$, then $G$ has a (cyclic) quotient group of order $q$, and $G$ fulfills \eqref{conj:malle_lower} by \cite{KM04}.
\end{proof}

\begin{remark} \label{rk:abc_explicit}
(a) More explicit examples derived from (b) could be given in (c). For example, the density zero conclusion also holds if $G$ is nilpotent of order divisible by 15. We refrain from considering more applications of this kind, to avoid complicated case distinctions.

\vspace{1mm}

\noindent
(b) By Corollary \ref{coro:abc_explicit}(c), if $q \geq 7$ is a prime number, then no regular $\Zz/q\Zz$-extension of $\Qq(T)$ is parametric, under the abc-conjecture. The interest of this remark is that none of the methods from \cite{KL18} and \cite[\S7]{KLN19} applies to finite groups of prime order. 

More generally, by the above, no regular $G$-extension of $\Qq(T)$ with $r \geq 7$ branch points is parametric, under the abc-conjecture and, possibly, the lower bound \eqref{conj:malle_lower}. In Appendix \ref{sec:parametric}, we discuss the situation where $r$ is 2 or 3. The case $r \in \{4,5,6\}$ remains open in general.
\end{remark}

\subsection{Variants} \label{ssec:uniform}

We provide below two variants of Corollary \ref{coro:abc+malle}. 

The first one asserts that one can remove the assumption that the lower bound \eqref{conj:malle_lower} holds, at the cost of making \eqref{eq2} less explicit:

\begin{theorem} \label{thm:compare}
There exists a positive constant $r_0(G)$ such that if $r \geq r_0(G)$ and if the abc-conjecture holds, then the set ${\rm{Sp}}(f)$ has density 0.
\end{theorem}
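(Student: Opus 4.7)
The plan is to replace the conditional Malle lower bound \eqref{conj:malle_lower} used in Corollary \ref{coro:abc+malle} by the unconditional (but weaker) lower bound on $|\mathcal{S}(G,x)|$ furnished by D\`ebes \cite{Deb17}, and to absorb the resulting loss by letting the number of branch points $r$ grow. Since we are given a regular $\qq$-$G$-cover of $\mathbb{P}^1$, the group $G$ is automatically a regular Galois group over $\Qq$, so D\`ebes's theorem applies and yields an unconditional lower bound of the form
\begin{equation*}
|\mathcal{S}(G,x)| \geq c_1(G) \cdot x^{\beta(G)}
\end{equation*}
for all sufficiently large $x$, where $\beta(G) > 0$ is a constant depending only on $G$, obtained by counting the specializations of some (conveniently chosen) regular $G$-extension of $\Qq(T)$.

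Next, I would apply Theorem \ref{thm:abc_spec} to the cover $f$ itself. Under the abc-conjecture this gives, for every $\epsilon>0$ and all sufficiently large $x$, an upper bound $|{\rm{Sp}}(f) \cap \overline{\mathcal{S}}(G,x)| \leq x^{e+\epsilon}$, with $e$ as in \eqref{eq1.5}. The key uniformity observation is that, using the universal estimates $e_0 \geq 2$ and $q_0 \geq 2$, one has
\begin{equation*}
e \leq \frac{4}{|G|\,(r-4)},
\end{equation*}
so $e$ tends to $0$ as $r \to \infty$, uniformly in the ramification data of $f$. This lets us pick a constant $r_0(G)$, depending only on $G$, such that $e < \beta(G)/2$ whenever $r \geq r_0(G)$.

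With these two ingredients in hand, the conclusion is routine: for $r \geq r_0(G)$ and a small enough $\epsilon>0$, combining the bounds gives
\begin{equation*}
\frac{|{\rm{Sp}}(f) \cap \overline{\mathcal{S}}(G,x)|}{|\mathcal{S}(G,x)|} = O\bigl(x^{e+\epsilon - \beta(G)}\bigr) \longrightarrow 0
\end{equation*}
as $x \to \infty$, and since $|\overline{\mathcal{S}}(G,x)|$ and $|\mathcal{S}(G,x)|$ agree up to a multiplicative constant depending only on $G$, this yields the density zero conclusion for ${\rm{Sp}}(f)$.

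I expect the main obstacle to be the first step, namely extracting from \cite{Deb17} a clean lower bound exponent $\beta(G)$ that genuinely depends only on $G$ (and not on the specific extension used to produce it) and verifying that it is strictly positive as soon as $G$ occurs as a regular Galois group over $\Qq$. Once this is pinned down, the rest of the argument reduces to chaining together the two inequalities and the elementary uniform estimate on $e$, with no further input from abc beyond what Theorem \ref{thm:abc_spec} already provides.
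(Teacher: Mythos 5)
Your proposal is correct and follows essentially the same route as the paper: invoke D\`ebes's unconditional lower bound $x^{\beta(G)} \leq |\overline{\mathcal{S}}(G,x)|$ (valid since the existence of $f$ makes $G$ a regular Galois group over $\Qq$), combine it with the abc-conditional upper bound of Theorem \ref{thm:abc_spec}, and use the uniform estimate $e \leq 4/(|G|(r-4))$ to force $e < \beta(G)$ for $r$ large. Your worry about $\beta(G)$ is resolved exactly as you suspect: one fixes a single regular $G$-extension once and for all, so its branch point number, and hence the exponent $a(G)/r_1$ from \cite[Theorem 1.1]{Deb17}, depends only on $G$.
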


\begin{proof}
Without loss, we may assume $G$ is a regular Galois group over $\Qq$ \footnote{The definition of a regular Galois group over $\Qq$ is recalled in \S\ref{ssec:intro1}.}. Then, by \cite[Theorem 1.1]{Deb17}, there exists a positive constant $\beta(G)$ such that  the following holds for every sufficiently large $x$ (up to an explicit multiplicative constant depending on $G$): $$x^{\beta(G)} \leq | \overline{{\mathcal{S}}}(G,x)|.$$ 
Hence, by Theorem \ref{thm:abc_spec} and Remark \ref{rk:stup}(b), if $r \geq 5$, it suffices to check $e< \beta(G)$ (with $e$ as in \eqref{eq1.5}), which can be guaranteed if $r$ is sufficiently large (depending on $G$).
\end{proof}

\begin{remark}
In fact, \cite[Theorem 1.1 and \S4.1]{Deb17} provides the following:

\vspace{2mm}

\noindent
{\it{Let $f_1 : X_1\to \mathbb{P}^1$ be a regular $\qq$-$G$-cover with $r_1$ branch points. Then, for all sufficiently large $x$, one has $x^{a(G)/r_1} \le |{\rm{Sp}}(f_1) \cap \overline{\mathcal{S}}(G, x)|$, where $a(G)$ may be chosen as $(|G|-1) \cdot {|G|}^{-1} \cdot (3 |G|^4 \log(|G|))^{-1}$.}}

\vspace{2mm}

\noindent
Combination with our Theorem \ref{thm:abc_spec} then gives
$$ x^{a(G)/r} \le |{\rm{Sp}}(f) \cap \overline{\mathcal{S}}(G, x)| \le x^{b(G)/r},$$
where $b(G) > a(G)$ is an explicit positive constant depending only on $G$, under the abc-conjecture. In particular, if $f_2 : X_2\to \mathbb{P}^1$ is another regular $\qq$-$G$-cover with $r_2> ({b(G)}/{a(G)}) \cdot r_1$ branch points, then this inequality holds for every sufficiently large $x$, under the abc-conjecture:
$$|{\rm{Sp}}(f_2) \cap \overline{\mathcal{S}}(G, x)| < |{\rm{Sp}}(f_1) \cap \overline{\mathcal{S}}(G,x)|.$$

\noindent
As a consequence, the constant $r_0(G)$ in Theorem \ref{thm:compare} can be made explicit. Namely, if $G$ is not a regular Galois group over $\Qq$, one can arbitrarily take $r_0(G)=1$. Otherwise, take any $r_0(G) > ({b(G)}/{a(G)}) \cdot r_1(G)$, where $r_1(G)$ is the smallest number of branch points of a regular $\qq$-$G$-cover of $\mathbb{P}^1$.
\end{remark}

For our second variant, we need to recall beforehand the statements of the uniformity conjecture and the upper bound from the Malle conjecture.

\vspace{2mm}

\noindent
{\bf{The uniformity conjecture.}} {\it{Let $g \geq 2$ be an integer. Then there exists a positive integer $B$, depending only on $g$, such that the set of all $\Qq$-rational points on any given smooth curve defined over $\Qq$ with genus $g$ has cardinality at most $B$.}}

\vspace{2mm}

\noindent
{\bf{The Malle conjecture (upper bound).}} {\it{For every $\epsilon >0$, one has
\begin{equation} \label{conj:malle_upper}
|\mathcal{S}(G,x)| \leq c_2(G, \epsilon) \cdot x^{\alpha(G) + \epsilon}
\end{equation}
for some constant $c_2(G, \epsilon) >0$ and every sufficiently large $x$, where $\alpha(G)$ is defined in \eqref{eq:intro_3}.}}

\begin{theorem} \label{thm:uniform}
Suppose the uniformity conjecture holds and $G$ has a normal subgroup $H$ such that the following three conditions are satisfied:

\vspace{0.5mm}

\noindent
{\rm{(a)}} for every regular $\qq$-$G/H$-cover $X \rightarrow \mathbb{P}^1$, the genus of $X$ is at least 2,

\vspace{0.5mm}

\noindent
{\rm{(b)}} $p$ does not divide the order of $G/H$,

\vspace{0.5mm}

\noindent
{\rm{(c)}} \eqref{conj:malle_lower} and \eqref{conj:malle_upper} hold for the groups $G$ and $G/H$, respectively.

\vspace{0.5mm}

\noindent
Then the set ${\rm{Sp}}(f)$ has density 0.
\end{theorem}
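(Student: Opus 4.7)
The plan is to reduce the counting of $G$-specializations of $f$ to the counting of $G/H$-specializations of the intermediate regular $\qq$-$G/H$-cover $f_H : X/H \to \mathbb{P}^1$, and then apply the uniformity conjecture on each fiber of this reduction. Specialization is compatible with taking quotients: for every non-branch $t_0 \in \mathbb{P}^1(\qq)$, the morphism $(f_H)_{t_0}$ is, up to $G/H$-conjugation, the composition of $f_{t_0}$ with the projection $G \twoheadrightarrow G/H$. Hence every $\varphi \in \mathrm{Sp}(f)$, corresponding to a $G$-extension $F/\qq$, induces a well-defined $\bar\varphi \in \mathrm{Sp}(f_H)$, corresponding to $F^H/\qq$; I would then stratify the count of $\mathrm{Sp}(f) \cap \overline{\mathcal{S}}(G,x)$ according to this $G/H$-quotient.

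Next I would bound the size of each stratum. Fix a $\bar\varphi \in \mathrm{Sp}(f_H)$ corresponding to a $G/H$-extension $F'/\qq$. The $t_0$'s with $(f_H)_{t_0} = \bar\varphi$ (up to $G/H$-conjugation) are, up to an $|G|$-dependent multiplicative constant and the trivial points, in bijection with the $\qq$-rational points of the twisted curve $\widetilde{X/H}^{\bar\varphi}$ recalled in \S\ref{ssec:intro_5}. Since this twist is a $\qq$-form of $X/H$, it has geometric genus $g(X/H) \geq 2$ by hypothesis (a). The uniformity conjecture therefore bounds $|\widetilde{X/H}^{\bar\varphi}(\qq)|$ by a constant $B$ depending only on $g(X/H)$. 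Each such $t_0$ gives a $\varphi = f_{t_0}$, so at most $O_G(B)$ distinct elements of $\mathrm{Sp}(f) \cap \overline{\mathcal{S}}(G,x)$ can have $F'$ as their $G/H$-quotient.

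For the final step, the tower formula
\begin{equation*}
|d_F| = |d_{F^H}|^{|H|} \cdot N_{F^H/\qq}(\mathfrak{d}_{F/F^H})
\end{equation*}
forces $|d_{F^H}| \leq x^{1/|H|}$ whenever $|d_F| \leq x$, so the relevant $G/H$-quotients $F'$ all lie in $\mathcal{S}(G/H, x^{1/|H|})$. Combining the stratum bound with the Malle upper bound for $G/H$ (hypothesis (c)) gives
\begin{equation*}
|\mathrm{Sp}(f) \cap \overline{\mathcal{S}}(G,x)| = O\bigl(x^{\alpha(G/H)/|H| + \epsilon}\bigr),
\end{equation*}
while the Malle lower bound for $G$ yields $|\overline{\mathcal{S}}(G,x)| \gg x^{\alpha(G)}$. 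A direct computation shows
\begin{equation*}
\frac{\alpha(G/H)}{|H|} = \frac{1}{|G|} \cdot \frac{p'}{p'-1},
\end{equation*}
where $p'$ is the smallest prime divisor of $|G/H|$. Hypothesis (b) forces $p' > p$, hence $p'/(p'-1) < p/(p-1)$, so $\alpha(G/H)/|H| < \alpha(G)$; picking $\epsilon$ small enough makes the ratio defining the density tend to $0$.

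The main obstacle I anticipate is technical rather than conceptual: cleanly handling the conjugation equivalences used to define $\overline{\mathcal{S}}(G,x)$ and $\overline{\mathcal{S}}(G/H, y)$, the trivial $\qq$-rational points on the twisted curves, and the passage from $t_0$-count to $\varphi$-count on each stratum. None of this should affect the orders of growth involved in the ratio, so the crux really reduces to the elementary inequality $p'/(p'-1) < p/(p-1)$ supplied by (b).
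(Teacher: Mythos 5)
Your proposal is correct and follows essentially the same route as the paper: stratify the specializations by their $G/H$-quotient, bound each stratum by a uniform constant via the uniformity conjecture applied to the genus $\geq 2$ intermediate curve (the paper cites a ready-made finiteness statement from \cite{KL18} where you re-derive it through the twisted-cover dictionary), use $|d_{F^H}| \leq |d_F|^{1/|H|}$ together with the Malle upper bound for $G/H$ to count the strata, and compare with the Malle lower bound for $G$ via the inequality $\alpha(G/H)/|H| < \alpha(G)$ coming from hypothesis (b). The only cosmetic difference is that working with epimorphisms modulo conjugation makes the quotient map canonical, whereas the paper, working with fields, must run the argument over all subextensions of $E/\Qq(T)$ with group $G/H$; this does not affect the orders of growth.
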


\begin{remark} \label{rk:uniform}
(a) By \cite[Theorem 1.1]{CHM97}, the uniformity conjecture holds under the Lang conjecture, which asserts that the set of all $\Qq$-rational points on any variety of general type defined over $\Qq$ is not Zariski dense.

\vspace{0.5mm}

\noindent
(b)  By \cite[Proposition 7.3]{KL18}, Condition (a) of Theorem \ref{thm:uniform} holds if $G/H$ is neither solvable of even order nor of order 3.
\end{remark}

\begin{proof}
As noted in \S\ref{sssec:basics_1.1}, it suffices to show that the set ${\rm{Sp}}(E)$ has density zero.

Let $E_1/\Qq(T), \dots,$ $E_s/\Qq(T)$ be the subextensions of $E/\Qq(T)$ of group $G/H$. For $i \in \{1, \dots,s\}$, let $g_i$ be the genus of $X_i$, where $X_i \rightarrow \mathbb{P}^1$ is the regular $\Qq$-$G/H$-cover associated with $E_i/\Qq(T)$. Also, let $q$ be the smallest prime divisor of the order of $G/H$. One then has 
$$\alpha(G/H)= \frac{|G|^{-1}}{|H|^{-1}} \bigg(1- \frac{1}{q} \bigg)^{-1}.$$

Let $F/\Qq$ be a $G$-extension in ${\rm{Sp}}(E)$ and $t_0 \in \mathbb{P}^1(\Qq)$ such that $F={E}_{t_0}$. By \cite[Lemma 3.2]{KL18}, $(E_1)_{t_0}/\Qq, \dots, (E_s)_{t_0}/\Qq$ are the distinct subextensions of $E_{t_0}/\Qq$ with Galois group $G/H$. Hence, there exists $i \in \{1, \dots,s\}$ such that $F^H/\Qq$ is the specialization of $E_i/\Qq(T)$ at $t_0$. Let $g_0 = {\rm{max}}(g_1, \dots, g_s)$. By (a) and as the uniformity conjecture holds, one may apply \cite[Proposition 2.5]{KL18} to get that there exists a positive constant $B=B(|G / H|, g_0)$ such that, for each $i \in \{1, \dots,s\}$, there exist at most $B$ points $t_0 \in \mathbb{P}^1(k)$ with $F^H/\Qq = (E_i)_{t_0}/\Qq$. Moreover, if $d_F$ and $d_{F^H}$ denote the absolute discriminants of the number fields $F$ and $F^H$, respectively, then one has $|d_{F^H}| \leq |d_F|^{1/|H|}$. Conclude that this inequality holds for every positive integer $x$:
\begin{equation} \label{eq:G/H}
|{\rm{Sp}}(E) \cap \mathcal{S}(G,x)| \leq Bs \cdot |\mathcal{S}(G/H,x^{1/|H|})|.
\end{equation}
By (b), one has $p < q$, that is, $(1/|H|) \cdot \alpha(G/H) < \alpha(G)$. Let $\epsilon >0$ be such that 
\begin{equation} \label{expo}
{\alpha(G/H) + \epsilon} < \alpha(G) \cdot |H|.
\end{equation}
Combining \eqref{eq:G/H} and the assumption that \eqref{conj:malle_upper} holds for the group $G/H$ then provides
\begin{equation} \label{eq:G/H2}
|{\rm{Sp}}(E) \cap \mathcal{S}(G,x)| \leq Bs \cdot c_2(G/H, \epsilon) \cdot x^{(\alpha(G/H) + \epsilon) / |H|}
\end{equation}
for some positive constant $c_2(G/H, \epsilon)$ and every $x \geq x(G/H, \epsilon)$. On the other hand, since \eqref{conj:malle_lower} has been assumed to hold for the group $G$, one has
\begin{equation} \label{eqref:G}
|\mathcal{S}(G,x)| \geq c_1(G) \cdot x^{\alpha(G)}
\end{equation}
for some positive constant $c_1(G)$ and every $x \geq x(G, \epsilon)$. Combine \eqref{eq:G/H2} and \eqref{eqref:G} to get
\begin{equation}\label{eq:b}
\frac{|{\rm{Sp}}(E) \cap \mathcal{S}(G,x)|}{|\mathcal{S}(G,x)|} = O(x^{(\alpha(G/H) + \epsilon)/|H|- \alpha(G)}), \quad x \rightarrow \infty.
\end{equation}
It then remains to combine \eqref{expo} and \eqref{eq:b} to conclude that the set ${\rm{Sp}}(E)$ has density 0.
\end{proof}

\begin{corollary} \label{coro:uniform}
Suppose the uniformity conjecture holds, the group $G$ is nilpotent, and one of the following two conditions is satisfied:

\vspace{0.5mm}

\noindent
{\rm{(a)}} $G$ is of even order but $|G| \not \in \{2^a3^b \, : \, a \geq 1, \, b \in \{0,1\}\}$,

\vspace{0.5mm}

\noindent
{\rm{(b)}} $G$ is of odd order and $|G|$ has at least two distinct prime factors.

\vspace{0.5mm}

\noindent
Then the set ${\rm{Sp}}(f)$ has density 0.
\end{corollary}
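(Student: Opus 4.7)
The plan is to reduce Corollary \ref{coro:uniform} to an application of Theorem \ref{thm:uniform} via a judicious choice of the normal subgroup $H$. Since $G$ is finite nilpotent, it decomposes as a direct product of its Sylow subgroups, so I would take $H$ to be the Sylow $p$-subgroup of $G$, where $p$ is the smallest prime divisor of $|G|$. Then $H$ is normal in $G$, the quotient $G/H$ is isomorphic to the product of the remaining Sylow subgroups, and $|G/H|$ is automatically coprime to $p$, which immediately establishes condition (b) of Theorem \ref{thm:uniform}.

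For condition (c) of Theorem \ref{thm:uniform}, both $G$ and $G/H$ are nilpotent (the latter as a quotient of the former), so I would invoke the Kl\"uners--Malle result for nilpotent groups \cite{KM04} to obtain both the lower bound \eqref{conj:malle_lower} for $G$ and the upper bound \eqref{conj:malle_upper} for $G/H$ at once.

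The main obstacle is verifying condition (a), which I would reduce to Remark \ref{rk:uniform}(b); this amounts to showing that $G/H$ is non-trivial, not of even order, and not of order $3$. The first two are fairly direct: in case (a) of the corollary, $p=2$, so $|G/H|$ equals the odd part of $|G|$, which is $>1$ exactly because $|G|$ is not a pure power of $2$; in case (b), $|G|$ is odd and has at least two prime factors, so $|G/H|$ is odd and non-trivial. The third condition requires a short case analysis: in case (a), the hypothesis $|G| \notin \{2^a \cdot 3 : a \geq 1\}$ is precisely what rules out the odd part of $|G|$ being equal to $3$; in case (b), if one had $|G/H|=3$, then $3$ would divide $|G|$, forcing $p=3$ since $|G|$ is odd, but this contradicts the fact that $|G/H|$ is coprime to $p$.

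With the three conditions of Theorem \ref{thm:uniform} verified in both cases, Theorem \ref{thm:uniform} applies and yields that ${\rm{Sp}}(f)$ has density $0$, as desired.
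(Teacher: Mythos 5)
Your proof is correct and follows the same overall strategy as the paper: reduce to Theorem \ref{thm:uniform} by exhibiting a suitable normal subgroup $H$, using \cite{KM04} for condition (c) and Remark \ref{rk:uniform}(b) for condition (a). The only difference is the choice of $H$: you take $H$ to be the Sylow $p$-subgroup for the \emph{smallest} prime $p$, so that $G/H$ is the full ``co-$p$'' part of $G$, whereas the paper quotients further down, to (essentially) a Sylow subgroup for the \emph{largest} prime, or to $\Zz/3\Zz\times\Zz/3\Zz$ in one residual case. Your choice is arguably cleaner, since it makes condition (b) of Theorem \ref{thm:uniform} automatic and replaces the paper's three-way case distinction by the single observation that the odd part of $|G|$ is non-trivial and not equal to $3$; both choices verify the hypotheses of Theorem \ref{thm:uniform} and the verifications you give (non-triviality, odd order, order $\neq 3$) are all accurate.
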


For example, Corollary \ref{coro:uniform} applies to the groups $\Zz/10\Zz$ and $\Zz/3\Zz \times \Zz/6\Zz$. Note that these groups have covers with four branch points and our results under the abc-conjecture cannot (a priori) apply to them.

\begin{proof}
As the group $G$ is nilpotent, \cite{KM04} may be used to show that the entire Malle conjecture holds for every quotient of $G$. By Theorem \ref{thm:uniform}, it then suffices to find a quotient of $G$ for which Conditions (a) and (b) of that theorem hold.

Set $G= P_1 \times \cdots \times P_s$ where $s \geq 1$ and $P_i$ is a non-trivial $p_i$-group for each $i \in \{1, \dots,s\}$. We assume $p_1 \leq \cdots \leq p_s$ and $|P_i| \leq |P_j|$ if $p_i=p_j$ ($i,j \in \{1, \dots, s\}$). If (a) holds, then $p_s \geq 5$ or ($p_s = 3$ and $|P_s| \geq 9$) or ($p_s=3$ and $P_s \times P_{s-1} \cong \Zz/3\Zz \times \Zz/3\Zz$). Then either $G/ (P_1 \times \cdots \times P_{s-1})$ (in the first two cases) or $G/ (P_1 \times \cdots \times P_{s-2})$ (in the third case) has odd order and it is not $\Zz/3\Zz$. In particular, Conditions (a) and (b) of Theorem \ref{thm:uniform} are fulfilled (see Remark \ref{rk:uniform}(b)). If (b) holds, then $p_1 < p_s$ and $p_s \geq 5$, and one concludes as in (a).
\end{proof}

\subsection{Proof of Theorem \ref{thm:abc_spec}} \label{ssec:proof}

The proof relies on this consequence of the abc-conjecture, due to Elkies, Langevin, and Granville (see, e.g., \cite[Theorem 5]{Gra98}):

\begin{theorem} \label{langevin}
Let $P(U,V) \in \zz[U,V]$ be a homogeneous polynomial of degree $d$ without any repeated factors. Assume the abc-conjecture holds. Then, for every $\epsilon>0$ and every couple $(u,v)$ of coprime integers, one has
$${\rm{rad}}(P(u,v)) \geq c_1 \cdot \max\{|u|, |v|\}^{d-2-\epsilon},$$
where $c_1$ is a positive constant depending only on $P$ and $\epsilon$.
\end{theorem}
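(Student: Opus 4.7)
My plan is to reduce Theorem~\ref{langevin} to the abc-conjecture over $\mathbb{Q}$ via the classical Elkies--Langevin--Granville strategy: factor $P$ over its splitting field into pairwise non-proportional linear forms, extract sum-zero triples from the resulting linear dependencies, and apply abc to each of them.

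\emph{Setup and main step.} I would first factor $P(U,V) = c_P \prod_{i=1}^d L_i(U,V)$ over the splitting field $K/\mathbb{Q}$ of $P(T,1)$, where each $L_i = \alpha_i U - \beta_i V \in \mathcal{O}_K[U,V]$ and no two $L_i$'s are proportional (by the no-repeated-factors hypothesis). Set $M := \max(|u|, |v|)$, so that $|L_i(u,v)| \ll M$ uniformly, with an implicit constant depending only on $P$. For any triple $(i,j,k)$ of distinct indices, three linear forms in two variables are $K$-linearly dependent, yielding nonzero constants $\mu_{ijk}, \nu_{ijk}, \rho_{ijk} \in K^\times$ (depending only on the $L_m$'s) with $\mu_{ijk} L_i + \nu_{ijk} L_j + \rho_{ijk} L_k \equiv 0$. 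Evaluating at $(u,v)$ produces a sum-zero triple of algebraic integers, to which I would apply the abc-conjecture over $K$ (a standard consequence of abc over $\mathbb{Q}$ via a norm reduction). This gives
$$M \ll_{\epsilon} \mathrm{rad}_K\bigl(L_i(u,v)\, L_j(u,v)\, L_k(u,v)\bigr)^{1+\epsilon}.$$
Chaining $d-2$ such applications along overlapping triples $(1,2,3), (1,2,4), \dots, (1,2,d)$ and multiplying them out yields the sharpened bound $M^{d-2} \ll_\epsilon \prod_{i=1}^d \mathrm{rad}_K(L_i(u,v))^{1+\epsilon}$.

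\emph{From $K$-radicals to $\mathrm{rad}(P(u,v))$.} Any prime ideal of $\mathcal{O}_K$ dividing both $L_i(u,v)$ and $L_j(u,v)$ for $i \ne j$ must divide the fixed nonzero resultant $\mathrm{Res}(L_i,L_j) \in \mathcal{O}_K$, so the contribution of such common primes is bounded by a constant depending only on $P$. Taking norms $N_{K/\mathbb{Q}}$ then shows $\prod_i \mathrm{rad}_K(L_i(u,v)) \le C(P) \cdot \mathrm{rad}(P(u,v))^{[K:\mathbb{Q}]}$, and feeding this back (after a minor adjustment of $\epsilon$) produces the claimed inequality $\mathrm{rad}(P(u,v)) \ge c_1 M^{d-2-\epsilon}$, with $c_1 = c_1(P,\epsilon) > 0$.

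\emph{Main obstacle.} The delicate step is the bootstrap to exponent $d-2$: a naive averaging over all $\binom{d}{3}$ triples only produces the weaker exponent $d/3$, so the sequence of chained triples must be chosen so that each form $L_i$ appears the correct number of times and the errors from the constants $\mu_{ijk}, \nu_{ijk}, \rho_{ijk}$ do not accumulate. An alternative and arguably cleaner route, due to Elkies, bypasses this bookkeeping via Belyi's theorem: one constructs a morphism $\phi: \mathbb{P}^1 \to \mathbb{P}^1$ ramified only above $\{0,1,\infty\}$ and sending every root of $P$ into $\{0,1,\infty\}$, then applies abc over $\mathbb{Q}$ directly to $\phi(u/v) + (1-\phi(u/v)) = 1$ (after clearing denominators), with Riemann--Hurwitz supplying the exponent $d-2-\epsilon$ in a single stroke.
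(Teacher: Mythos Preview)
The paper does not supply a proof of Theorem~\ref{langevin}; it simply invokes it as a known consequence of the abc-conjecture due to Elkies, Langevin, and Granville, citing \cite[Theorem~5]{Gra98}. Your sketch is in line with the arguments found in that literature, and the Belyi route you describe at the end is precisely Elkies' argument as presented in Granville's paper.

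One point of caution on your first approach: the assertion that abc over a number field $K$ is ``a standard consequence of abc over $\Qq$ via a norm reduction'' is too quick---taking norms of a sum-zero relation $a+b=c$ in $\mathcal{O}_K$ does not produce a sum-zero relation in $\Zz$, so deducing abc over $K$ from abc over $\Qq$ is genuinely non-trivial (and, in full generality, not known). This is exactly why the Belyi approach, which stays over $\Qq$ throughout and extracts the exponent $d-2$ from Riemann--Hurwitz in a single stroke, is the argument of record in the cited source. Your instinct to flag the chaining bookkeeping as the main obstacle in the linear-forms route is also correct: with the overlapping triples $(1,2,i)$ the factors $L_1,L_2$ appear $d-2$ times each, and absorbing this overcount is where the work lies.
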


We break the proof of Theorem \ref{thm:abc_spec} into three parts.

\subsubsection{Controlling the ramification of specializations of $f$}

The first part requires associating a homogeneous polynomial controlling the ramification behaviour in specializations of $f$, which is done via Theorem \ref{beckmann}. 

For each $i\in \{1, \dots ,r\}$, let $P_i(U,V)\in \zz[U,V]$ be the minimal polynomial of the branch point $t_i$. Set $$P(U,V) = \prod_{i \in I} P_i(U,V),$$
where the $t_i$'s, $i \in I$, build a set of representatives of $t_1, \dots, t_r$ modulo the action of G$_\Qq$. Moreover, set $a_i=|G|(1-1/e_i)$, where $e_i$ is as before the ramification index of $t_i$, for each $i \in I$ (so $a_i$ is the index of an inertia group generator at $t_i$, viewed as a permutation in the regular permutation action of $G$ \footnote{Recall that the {\it{index}} of a permutation $\sigma \in S_d$ is defined as $d$ minus the number of orbits of $\langle \sigma\rangle$.}). For $t_0 \in \Qq$, set $t_0 = u/v$, with $u$ and $v$ coprime integers, and denote the absolute discriminant of $E_{t_0}$ by $d_{t_0}$.

Let $\ell$ be a prime number, not contained in the finite exceptional set $\mathcal{S}_{\rm{exc}}$ from Theorem \ref{beckmann}. By that theorem, $\ell$ is (tamely) ramified in $E_{t_0}/\Qq$ with ramification index $e_i$ if $\ell$ divides $P_i(u,v)$ with positive multiplicity at most $q_i-1$, where $q_i$ is the smallest prime divisor of $e_i$. In that case, the exponent of $\ell$ in $d_{t_0}$ equals $a_i$. Therefore, we get the following lower bound:

$$ |d_{t_0}| \geq \prod_{i \in I} \prod_\ell \ell^{|G|(1-1/e_i)},$$
where, given $i \in I$, the second product is over all prime numbers $\ell$ which are not in $\mathcal{S}_{\rm{exc}}$ and which divide $P_i(u,v)$ with positive multiplicity at most $q_i-1$. As the finitely many elements of the set $\mathcal{S}_{\rm{exc}}$, as well as the numbers $q_i$, $i \in I$, are fixed and depend only on $f$, we have 
\begin{equation} \label{eq:disc1}
|d_{t_0}| \geq c_0 \cdot \prod_{i \in I} \prod_\ell \ell^{|G|(1-1/e_i)},
\end{equation}
for some positive constant $c_0$ depending only on $f$, and where, given $i \in I$, the second product is over {\textit{all}} prime numbers $\ell$ which divide $P_i(u,v)$ with positive multiplicity at most $q_i-1$. Combining \eqref{eq:disc1} and the definitions of $e_0$ and $q_0$ (see the beginning of \S\ref{sec:densityI}) yields the following lower bound:
\begin{equation} \label{eq:disc2}
|d_{t_0}| \geq  c_0 \cdot \prod_\ell \ell^{|G|(1-1/e_0)},
\end{equation}
where the product is over all primes dividing $P(u,v)$ with positive multiplicity at most $q_0-1$.

\subsubsection{Applying Theorem \ref{langevin}}

The second part consists in estimating the product of all prime numbers dividing a given value of $P(U,V)$ with positive multiplicity at most $q_0-1$.

Let $u, \, v$ be coprime integers and set $n = {\rm{max}} \{|u|, |v|\}$. Given $\epsilon >0$, since the abc-conjecture has been assumed to hold, we may apply Theorem \ref{langevin} to get this lower bound:
\begin{equation} \label{eq:disc2.5}
{\textrm{rad}}(P(u,v)) \geq c_1 \cdot n^{\deg(P)-2-\epsilon} = c_1 \cdot n^{r-2-\epsilon},
\end{equation}
where $c_1$ depends only on $P(U,V)$ and $\epsilon$. For $m \geq 1$, let $B_m$ be the product of all prime numbers dividing $P(u,v)$ exactly $m$ times. Setting $t_0=u/v$, \eqref{eq:disc2} can be rewritten as
\begin{equation} \label{eq:disc3}
|d_{t_0}| \geq c_0 \cdot \bigg(\prod_{m=1}^{q_0-1} B_m \bigg)^{|G|(1-1/e_0)}.
\end{equation}

Now, let $B_{\geq q_0}$ be the product of all $B_m$'s with $m \geq q_0$. Since ${\textrm{rad}}(P(u,v))$ is the product of all $B_m$'s with $m \geq 1$, one has
\begin{equation} \label{eq:disc4}
{\textrm{rad}}(P(u,v)) \leq \frac{|P(u,v)|}{\displaystyle{\prod_{m=q_0}^\infty} B_m^{m-1}} \leq \frac{|P(u,v)|}{B_{ \geq q_0}^{q_0-1}}.
\end{equation}
As $|P(u,v)| \leq  c_2 \cdot n^r$, with $c_2=c_2(P)$, the combination of \eqref{eq:disc2.5} and \eqref{eq:disc4} then yields
$$\frac{c_2 \cdot n^r}{B_{ \geq q_0}^{q_0-1}} \geq c_1 \cdot n^{r-2-\epsilon},$$
that is,
\begin{equation} \label{eq:disc5}
B_{ \geq q_0} \leq c_3 \cdot n^{(2+ \epsilon)/(q_0-1)}
\end{equation}
for some positive constant $c_3$ depending only on $f$ and $\epsilon$. Combining \eqref{eq:disc2.5}, \eqref{eq:disc3}, and \eqref{eq:disc5} then provides the following bound (up to replacing $\epsilon$ by $\epsilon|G|^{-1}(1-1/e_0)^{-1}(q_0-1)/q_0$):
\begin{equation} \label{eq:disc6}
|d_{t_0}|\geq c_0 \cdot \bigg(\frac{{\textrm{rad}}(P(u,v))}{B_{ \geq q_0}} \bigg)^{|G|(1-1/e_0)} \geq c_4 \cdot n^{|G|(1-1/e_0)(r-2-2/(q_0-1))-\epsilon},
\end{equation}
where $c_4$ is some positive constant depending only on $f$ and $\epsilon$.

\subsubsection{Conclusion}

Finally, we use the estimate \eqref{eq:disc6} to bound $|{\rm{Sp}}(f) \cap \overline{\mathcal{S}}(G,x)|$ from above. 

Let $\delta$ be any real number such that
$$\delta >\frac{r}{r - 2-2/(q_0-1)}.$$
By \eqref{eq1}, $\delta$ is well-defined and positive. Let $\epsilon$ be a positive real number. By \eqref{eq:disc6}, for every couple $(u,v)$ of coprime integers, one has
\begin{equation} \label{eq:disc7}
|d_{u/v}|  \geq c_4 \cdot {\rm{max}} \{|u|, |v|\}^{r \cdot |G|(1-1/e_0) \cdot \delta^{-1}-\epsilon}.
\end{equation}
Let $n$ be a sufficiently large integer (depending on $\epsilon$). The lower bound \eqref{eq:disc7} then allows to conclude that all specializations $E_{t_0}/\Qq$ of $E/\Qq(T)$ with $t_0 \in \Qq$ and such that $$|d_{t_0}| \leq  n^{r \cdot |G|(1-1/e_0) \cdot \delta^{-1}}$$ must come from values $t_0 = u/v$ with ${\rm{max}} \{|u|, |v| \} \leq n$. Setting $x=n^{r \cdot |G| (1-1/e_0)\cdot \delta^{-1}}$, we find that all specializations $E_{t_0}/\Qq$ of $E/\Qq(T)$ with $t_0 \in \Qq$ and such that $|d_{t_0}| \leq x$ must come from values $t_0 = u/v$ with $${\rm{max}} \{|u|, |v| \} \leq x^{\delta\cdot (r |G| (1-1/e_0))^{-1}}.$$ In particular, choosing 
$$\delta = \frac{r}{r - 2-2/(q_0-1)} + \frac{\epsilon}{2} \cdot r \cdot |G| \cdot \bigg( 1 - \frac{1}{e_0} \bigg)$$
and using the definition of our exponent $e$ given in \eqref{eq1.5}, we obtain
$${\rm{max}} \{|u|, |v| \} \leq x^{(e + \epsilon)/2}.$$
As there are at most $4 \cdot x^{e + \epsilon}$ such pairs of integers $(u,v)$, this concludes the proof.

\section{Unconditional results} \label{sec:densityII}

The aim of this section is to show unconditionally that the set of specializations of almost all regular $\Qq$-$G$-covers of $\mathbb{P}^1$, where $G=\Zz/2\Zz$ or $S_3$, has density zero.

\subsection{The quadratic case} \label{ssec:quadraticII}

We start with the case $G=\Zz/2\Zz$ and, for simplicity, use the function field extension language, which is strictly identical to the cover point of view. 

\subsubsection{Main result}

The following statement is a more precise version of Theorem \ref{thm:intro_3} from the introduction. Note that the unconditional upper bound in (b) is expectedly weaker than the one pro\-vided by Theorem \ref{thm:abc_spec} under the abc-conjecture. Recall that the sets $\mathcal{E}(r)$ and $\mathcal{E}(r,H)$, which occur in the statement below, are defined in \S\ref{ssec:basics_3}.

\begin{theorem} \label{thm:Z/2Z_even}
Let $r$ be an even positive integer. Then there exists a subset $S$ of $\mathcal{E}(r)$ which satisfies the following two conclusions.

\vspace{0.5mm}

\noindent
{\rm{(a)}} One has $$\frac{|S \cap \mathcal{E}(r,H)|}{|\mathcal{E}(r,H)|} = 1-O \bigg( \frac{{\log}(H)}{\sqrt{H}} \bigg), \quad H \rightarrow \infty.$$
In particular, the set $S$ has density 1.

\vspace{0.5mm}

\noindent
{\rm{(b)}} For every extension $E/\Qq(T)$ in $S$, there exists a positive constant $\alpha < 1$ such that
$${|{\rm{Sp}}(E) \cap \mathcal{S}(\Zz/2\Zz,x)|} = O(x \cdot {\log}^{- \alpha}(x)), \quad x \rightarrow \infty.$$
In particular, the set of specializations of every extension of $\Qq(T)$ in $S$ has density 0.
\end{theorem}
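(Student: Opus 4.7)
The plan is to isolate a density-$1$ subset $S \subseteq \mathcal{E}(r)$ using quantitative Hilbert irreducibility, and then, for each $E \in S$, to exploit a $q$-adic divisibility obstruction at each prime divisor $q$ of an admissible discriminant $d$, combined with Chebotarev and a Landau--Wirsing sieve, in order to rule out all but $O(x \log^{-\alpha}(x))$ such $d$'s. The argument is purely local, in the spirit of \S\ref{ssec:intro_4}.

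\textbf{Defining $S$.} By \S\ref{ssec:basics_3}, each $E \in \mathcal{E}(r)$ corresponds to a unique polynomial $P_E \in \mathcal{P}_2(2,r) \cup \mathcal{P}_2(2,r-1)$. I take $S$ to consist of those $E \in \mathcal{E}(r)$ for which $\deg P_E = r$ and the Galois group of $P_E$ over $\Qq$ is the full symmetric group $S_r$. By a Cohen-type quantitative Hilbert irreducibility theorem (in the spirit of the one used in the proof of \cite[Lemma~5.8]{Leg18b}), the number of $P \in \mathcal{P}_2(2,r,H)$ with non-full Galois group is $O(H^{r+1/2} \log H)$. Combined with the trivial bound $|\mathcal{P}_2(2,r-1,H)| = O(H^r)$ and the asymptotic $|\mathcal{E}(r,H)| \sim \alpha(r)\,H^{r+1}$ from Proposition~\ref{prop:card}, this yields conclusion~(a).

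\textbf{Local obstruction at primes dividing $d$.} Fix $E \in S$, set $P := P_E$, and let $\mathcal{S}_{\rm{exc}}$ be the finite set of primes dividing $2 \cdot \mathrm{lc}(P) \cdot \mathrm{disc}(P)$. Let $d \in \mathcal{N}_2$ satisfy $\Qq(\sqrt{d}) \in {\rm{Sp}}(E)$; by Proposition~\ref{prop:Z/2Z}(b), there exists $[y_0 : t_0 : z_0] \in C_{dP}(\Qq)$ with $y_0 \ne 0$. For each prime $q \mid d$ with $q \notin \mathcal{S}_{\rm{exc}}$, rescale $(t_0,z_0)$ so that $t_0,z_0 \in \Zz_q$ and $\min(v_q(t_0),v_q(z_0))=0$. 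Then $P(t_0,z_0) \in \Zz_q$, and the identity $y_0^2 = d \cdot P(t_0,z_0)$ in $\Qq_q$ yields $2 v_q(y_0) = 1 + v_q(P(t_0,z_0))$; the parity of the left-hand side forces $v_q(P(t_0,z_0))$ to be odd and at least $1$, so $\overline{P}$ vanishes at $(t_0 \bmod q : z_0 \bmod q) \in \mathbb{P}^1(\mathbb{F}_q)$. Writing $\mathcal{P}_P$ for the set of primes at which $P$ has a root in $\mathbb{P}^1(\mathbb{F}_q)$, I conclude that every prime factor of $d$ outside $\mathcal{S}_{\rm{exc}}$ must lie in $\mathcal{P}_P$.

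\textbf{Chebotarev and sieve.} By the Chebotarev density theorem applied to the splitting field of $P$, the set $\mathcal{P}_P$ has natural density $\eta_P$ equal to the proportion of elements of $S_r$ with at least one fixed point. Since $r \ge 2$, $S_r$ contains derangements (e.g.\ an $r$-cycle), and so $\eta_P < 1$. A classical Landau--Wirsing upper-bound sieve for integers supported on a set of primes of density $\eta_P$ gives
\[
\#\bigl\{d \in \mathcal{N}_2(x) : \text{every prime factor of $d$ outside $\mathcal{S}_{\rm{exc}}$ lies in $\mathcal{P}_P$}\bigr\} = O\bigl(x \cdot (\log x)^{\eta_P - 1}\bigr),
\]
so the previous step yields $|{\rm{Sp}}(E) \cap \mathcal{S}(\Zz/2\Zz,x)| = O(x \log^{-\alpha}(x))$ with $\alpha := 1 - \eta_P \in (0,1)$, which is (b). The main obstacle is Step~1: the specific error rate $O(\log(H)/\sqrt{H})$ requires a careful quantitative form of Hilbert irreducibility rather than merely $o(1)$; Steps~2 and~3 are, respectively, a short valuation computation and a textbook sieve.
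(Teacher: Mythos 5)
Your proposal is correct and follows essentially the same route as the paper: a density-one set of extensions singled out by requiring full Galois group $S_r$ for $P_E$ (via a Cohen-type quantitative Hilbert irreducibility count, exactly as in Lemma \ref{lemma_0.5} and Appendix \ref{app:proof_1}), then a valuation argument at each prime $q\mid d$ forcing all prime factors of $d$ (outside a finite exceptional set) to lie in the set of primes modulo which $P_E$ has a root, and finally Chebotarev plus the Landau--Serre estimate \cite[th\'eor\`eme 2.3]{Ser76} to get the $x\log^{-\alpha}(x)$ bound with $\alpha=1-\eta_P$. The paper phrases the same obstruction complementarily (a positive-density set of primes that are not prime divisors of $P_E$, any one of which dividing $d$ kills all non-trivial rational points), but the two formulations are equivalent.
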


\subsubsection{Proof of Theorem \ref{thm:Z/2Z_even}}

Our main tool is the case $n=2$ of the following diophantine result, which has its own interest and which shows that almost all twists of almost all superelliptic curves over $\Qq$ have only trivial $\Qq$-rational points, under a suitable assumption on the degree. Recall that the sets $\mathcal{P}(n,N)$ and $\mathcal{P}(n,N,H)$, and the sets $\mathcal{N}_n(P(T))$ and $\mathcal{N}_n(P(T),x)$, which occur in the statement below, are defined in \S\ref{ssec:basics_0} and \S\ref{ssec:basics_2}.3, respectively.

\begin{theorem} \label{thm:super_uncond}
Given two positive integers $n$ and $N$ such that $2 \leq n$ and $n$ divides $N$, there exists a subset $S'$ of $\mathcal{P}(n,N)$ such that the following two conclusions are satisfied.

\vspace{0.5mm}

\noindent
{\rm{(a)}} One has
\begin{equation} \label{eq:bound0}
\frac{|{S}' \cap \mathcal{P}(n,N,H)|}{|\mathcal{P}(n,N,H)|} = 1 - O \bigg(\frac{{\log}(H)}{\sqrt{H}} \bigg), \quad H \rightarrow \infty.
\end{equation}
In particular, the set ${S}'$ has density 1.

\vspace{0.5mm}

\noindent
{\rm{(b)}} For each $P(T) \in S'$, there exists a positive constant $\alpha < 1$ such that 
\begin{equation} \label{eq:bound}
|\mathcal{N}_n(P(T),x)|= O(x \cdot {\log}^{- \alpha}(x)), \quad x \rightarrow \infty.
\end{equation}
In particular, for each $P(T) \in S'$, the density of the subset $\mathcal{N}_n(P(T))$ of $\mathcal{N}_n$ is 0.
\end{theorem}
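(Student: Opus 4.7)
The proof plan is to establish $|\mathcal{N}_n(P(T), x)| = O(x \log^{-\alpha}(x))$ for ``generic'' $P$ via local obstructions to the existence of non-trivial $\Qq_p$-rational points on the twists $y^n = d P(t)$, combined with a sieve over $n$-free integers. The set $S'$ will then be defined by demanding that $P$ have sufficiently generic reductions modulo small primes, so that enough non-trivial local obstructions are available.

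The core mechanism is as follows. For a prime $p$ outside a finite exceptional set (essentially the primes dividing $n$, the discriminant of $P$, or its leading coefficient), a necessary condition for $d \in \mathcal{N}_n(P(T))$ is that the class of $d$ in $\Qq_p^* / (\Qq_p^*)^n$ lies in the subset $\Lambda_p(P)$ consisting of images of the non-zero values $P(t)$ for $t \in \Qq_p$. (The passage from rational $t$ to a homogeneous formulation relies on $n \mid N$, so that the factor $b^N$ arising from the denominator is automatically an $n$-th power.) Whenever $P \bmod p$ fails to cover every coset of $(\mathbb{F}_p^*)^n$ in $\mathbb{F}_p^*$---the generic situation, translatable into a Chebotarev condition on the splitting field of $P(T)$---the set $\Lambda_p(P)$ is proper and eliminates a proportion $\geq c > 0$ of $n$-free residues of $d$ modulo $p$. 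Call such $p$ \emph{good for $P$}, and put in $S'$ those $P \in \mathcal{P}(n,N)$ for which the good primes have positive lower density among all primes. A Brun--Selberg type sieve on the $n$-free integers, combined with Mertens-type estimates, then yields
$$
|\{d \in \mathcal{N}_n(x) : d \in \Lambda_p(P) \text{ for every good } p \leq z\}| = O\Bigl(x \prod_{\text{good } p \leq z} (1 - c/p) \Bigr) = O\bigl(x (\log z)^{-\alpha}\bigr)
$$
for some $0 < \alpha < 1$ depending on the density of good primes; choosing a suitable $z = z(x)$ produces \eqref{eq:bound}.

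To control the exceptional set, one shows that a polynomial $P$ whose reduction modulo every prime $p \leq \sqrt{H} / \log H$ fails to exhibit the above generic behaviour must lie in a sparse subset of $\mathcal{P}(n,N,H)$. Elementary counting yields that a random $P \in \mathcal{P}(n,N,H)$ is equidistributed modulo primes of moderate size, while a Weil-type bound shows that a random degree-$N$ polynomial over $\mathbb{F}_p$ covers every coset of $(\mathbb{F}_p^*)^n$ with probability bounded away from $1$, uniformly in $p$. A union bound over primes $p \leq \sqrt{H}/\log H$ then yields the rate $O(\log H / \sqrt{H})$ announced in \eqref{eq:bound0}. The main technical obstacle lies precisely in this quantitative step: one must balance the range of primes against the height $H$ so as to have both genuine equidistribution of $P \bmod p$ \emph{and} the existence of a good prime below the cutoff for all but a fraction $O(\log H / \sqrt{H})$ of $P$'s. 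The $\sqrt{H}$ threshold reflects the square-root barrier in the relevant character sum estimates for the distribution of $P$-values modulo $n$-th powers in $\mathbb{F}_p$, while the logarithmic factor stems from the union over primes below the cutoff.
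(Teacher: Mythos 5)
Your overall architecture (a density-one set $S'$ cut out by a quantitative Hilbert-irreducibility condition, local obstructions at a positive-density set of primes, and a sieve/Landau--Serre count of the surviving $d$) is the right one, and it is in fact the architecture of the paper's proof. But the local obstruction you chose does not exist in sufficient supply. You declare $p$ ``good'' when the values of $P$ modulo $p$ fail to cover every coset of $(\mathbb{F}_p^*)^n$ in $\mathbb{F}_p^*$, and you claim this is the generic situation, a Chebotarev condition, and that a Weil-type bound shows non-covering with probability bounded away from $1$ uniformly in $p$. This is backwards. Writing $g=\gcd(n,p-1)$, the multiplicative character sum estimate that follows from Weil gives, for each coset $c\,(\mathbb{F}_p^*)^g$, that $\#\{t\in\mathbb{F}_p : P(t)\in c\,(\mathbb{F}_p^*)^g\}=p/g+O(N\sqrt p)$, provided $P$ is not a constant times a perfect $e$-th power for some $e\mid g$, $e>1$. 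Hence for every such $P$ \emph{all} cosets are covered once $p\gg_{N,n}1$: your set of good primes is finite for each fixed non-degenerate $P$, so the set $S'$ you propose is essentially empty and the sieve saves at most a bounded factor, far from $O(x\log^{-\alpha}x)$. The probabilistic claim in your treatment of (a) fails for the same reason. Your write-up is also internally inconsistent on this point: a prime at which a proportion $\geq c>0$ of residue classes is forbidden would contribute a factor $(1-c)$, not $(1-c/p)$, to the sieve product.

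The obstruction that actually occurs with positive density lives at primes \emph{dividing} $d$ and is purely a valuation obstruction; this is what the paper uses. If $p\nmid a_0a_N$ and $p$ is not a prime divisor of $P$ (i.e., there is no $t_0\in\Qq$ with $v_p(P(t_0))>0$), then for every candidate non-trivial point one checks $v_p(P(t))\equiv 0\pmod n$ (the cases $v_p(t)<0$ and the point at infinity use $n\mid N$ through $v_p(a_Nt^N)=Nv_p(t)$), so $y^n=d\cdot P(t)$ forces $n\mid v_p(d)$ --- impossible for an $n$-free $d$ divisible by $p$. By Chebotarev, the primes $p$ that are not prime divisors of $P$ have density equal to the proportion of elements of ${\rm Gal}(L/\Qq)$ fixing no root of $P$, which is positive as soon as the point stabilizers do not cover the group (e.g., when the Galois group is $S_N$). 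Thus every $d\in\mathcal{N}_n(P(T))$ must avoid all primes of a fixed positive-density set, and \cite[th\'eor\`eme 2.3]{Ser76} gives the bound $O(x\cdot\log^{-\alpha}(x))$ with $\alpha$ that density. Part (a) then only requires that the Galois group be $S_N$ outside a proportion $O(\log(H)/\sqrt{H})$ of $\mathcal{P}(n,N,H)$, which is the standard quantitative Hilbert irreducibility count you already invoke; no equidistribution of $P$ modulo primes up to $\sqrt{H}/\log H$, and no balancing of that range against $H$, is needed.
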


\begin{proof}[Proof of Theorem \ref{thm:Z/2Z_even} under Theorem \ref{thm:super_uncond}]
Let $S'$ be a subset of $\mathcal{P}(2,r)$ as in Theorem \ref{thm:super_uncond} and $\overline{S'} = \mathcal{P}(2,r) \setminus S'$. Let $S$ be the subset of $\mathcal{E}(r)$ consisting of all regular $\Zz/2\Zz$-extensions of $\Qq(T)$ with $r$ branch points and whose associated polynomial lies in the set ${S'} \cap \mathcal{P}_2(2,r)$ \footnote{The set $\mathcal{P}_2(2,r)$ is defined in \S\ref{ssec:basics_0}.}. 

First, we prove (a). For every positive integer $H$, one has
$$\begin{array}{lll}
1- \frac{|S \cap \mathcal{E}(r,H)|}{|\mathcal{E}(r,H)|} &= & 1- \frac{|S' \cap \mathcal{P}_2(2,r,H)|}{|\mathcal{E}(r,H)|}\\
& = & 1  - \frac{|\mathcal{P}_2(2,r, H)|}{|\mathcal{E}(r,H)|}  +\frac{|\overline{S'} \cap \mathcal{P}_2(2,r, H)|}{|\mathcal{E}(r,H)|}\\
& \leq & 1  - \frac{|\mathcal{P}_2(2,r, H)|}{|\mathcal{E}(r,H)|}  +\frac{|\overline{S'} \cap \mathcal{P}(2,r, H)|}{|\mathcal{E}(r,H)|}\\
& = & 1  - \frac{|\mathcal{P}_2(2,r, H)|}{|\mathcal{E}(r,H)|}  +\frac{|\mathcal{P}(2,r, H)|}{|\mathcal{E}(r,H)|} - \frac{|{S'} \cap \mathcal{P}(2,r, H)|}{|\mathcal{E}(r,H)|}\\
& = & 1  - \frac{|\mathcal{P}_2(2,r, H)|}{|\mathcal{E}(r,H)|}  +  \frac{|\mathcal{P}(2,r, H)| - |{S'} \cap \mathcal{P}(2,r, H)|}{|\mathcal{P}(2,r, H)|} \cdot \frac{|\mathcal{P}(2,r, H)|}{|\mathcal{E}(r,H)|}.\\
\end{array}$$
By Theorem \ref{thm:super_uncond}(a), one has
$$\frac{|\mathcal{P}(2,r, H)| - |{S'} \cap \mathcal{P}(2,r, H)|}{|\mathcal{P}(2,r, H)|} = O \bigg( \frac{{\log}(H)}{\sqrt{H}} \bigg), \quad H \rightarrow \infty.$$
Moreover, by Proposition \ref{prop:card} and as $|\mathcal{P}(2,r, H)| \sim 2^{r+1} \cdot H^{r+1}$ as $H$ tends to $\infty$, one has
$$1  - \frac{|\mathcal{P}_2(2,r, H)|}{|\mathcal{E}(r,H)|} = O \bigg( \frac{1}{H} \bigg) \,\, \, \, \, {\rm{and}} \,\, \, \, \, 
\frac{|\mathcal{P}(2,r, H)|}{|\mathcal{E}(r,H)|} = O(1)$$
as $H$ tends to $\infty$. Hence, one has
$$1- \frac{|S \cap \mathcal{E}(r,H)|}{|\mathcal{E}(r,H)|} = O \bigg( \frac{1}{H} \bigg) + O \bigg( \frac{{\log}(H)}{\sqrt{H}} \bigg) \cdot O(1) = O \bigg( \frac{{\log}(H)}{\sqrt{H}} \bigg), \quad H \rightarrow \infty.$$

Now, we prove (b). Given $E/\Qq(T) \in S$, there is a unique polynomial $P_E(T)$ in $S'$ with 
$$E=\Qq(T)(\sqrt{P_E(T)}).$$
By Theorem \ref{thm:super_uncond}(b), there is a constant $\alpha \in ]0,1[$ with
$$|\mathcal{N}_2(P_E(T), x)| = O(x \cdot {\log}^{- \alpha}(x))$$
as $x$ tends to $\infty$. By applying Proposition \ref{prop:Z/2Z}(b), we get that $|\mathcal{N}_2(P_E(T), x)|$ is the cardinality of the subset of $\mathcal{N}_2(x)$ (see \S\ref{ssec:basics_0}) defined by the extra condition that $\Qq(\sqrt{d})/\Qq$ is in ${\rm{Sp}}(E)$. As the absolute discriminant of the number field $\Qq(\sqrt{d})$ is $d$ or $4d$ ($d \in \mathcal{N}_2$), we get 
$$|{\rm{Sp}}(E) \cap \mathcal{S}(\Zz/2\Zz,x)| \leq |\mathcal{N}_2(P_E(T), x)| = O(x \cdot {\log}^{- \alpha}(x))$$
as $x$ tends to $\infty$. It then remains to use that $$|\mathcal{S}(\Zz/2\Zz,x)| \geq |\mathcal{N}_2(x/4)| \sim \frac{x}{4 \cdot \zeta(2)}$$
\noindent
as $x$ tends to $\infty$ to get the desired density zero conclusion.
\end{proof}

\begin{proof}[Comments on proof of Theorem \ref{thm:super_uncond}]
The proof is similar to the arguments given in \cite[\S3.2 and \S4.2]{Leg18b}, which yield Theorem \ref{thm:super_uncond} with the weaker conclusion that almost all superelliptic curves over $\Qq$ have at least one twist with only trivial $\Qq$-rational points. For the convenience of the reader, we offer in Appendix \ref{app:proof_1} a full proof of Theorem \ref{thm:super_uncond} with the necessary adjustments to get the desired stronger conclusion.
\end{proof}

In Appendix \ref{app:proof_2}, we give two variants of Theorem \ref{thm:super_uncond} where we relax the assumption that $n$ divides $N$, at the cost of making the conclusion in (b) weaker.

\subsection{Symmetric groups} \label{sec:S3}

The aim of this subsection is to give evidence that, given $n \geq 2$, almost all regular $\Qq$-$S_n$-covers of $\mathbb{P}^1$ have a specialization set of density $0$, thus generalizing the conclusion of Theorem \ref{thm:Z/2Z_even}. We count those covers via degree $n$ polynomials with Galois group $S_n$ over $\qq(T)$. For $n=3$, we obtain an unconditional result, given in Theorem \ref{thm:S3}.

\subsubsection{Preliminaries} \label{ssec:prelim_S3}

First, we explain our way of counting covers via polynomials. Given $n \geq 2$, if $E/\Qq(T)$ denotes the function field extension associated with a regular $\qq$-$S_n$-cover of $\mathbb{P}^1$, then $E$ is the splitting field over $\Qq(T)$ of a degree $n$ polynomial $$Y^n + a_{n-1}(T) Y^{n-1} + \cdots + a_1(T) Y + a_0(T),$$ with $a_{0}(T), \dots, a_{n-1}(T) \in \zz[T]$. A natural way of counting covers is then to count the corresponding polynomials up to a bounded $T$-degree and bounded height.

Given $n \geq 2$ and $D \geq 1$, we therefore consider the set $\mathcal{Q}(n,D)$ of all polynomials $P(T,Y)\in \zz[T][Y]$ which are monic and of degree $n$ in $Y$, and which are also of degree at most $D$ in $T$. Given $i \in \{0, \dots, n-1\}$ and $j \in \{0, \dots, D\}$, let $a_{i,j}\in \zz$ denote the coefficient at $T^j$ of $a_i(T)$. We then count covers by fixing an integer $H \geq 1$ and considering the set
$$\mathcal{Q}(n,D,H) = \{P(T,Y) \in \mathcal{Q}(n,D) \ : \ |a_{i,j}| \leq H \text{ for all } i,j\}.$$ 

\subsubsection{Main result} \label{ssec:thm_S3}

Our eventual goal is to prove Theorem \ref{thm:S3} below, which is a statement about Galois covers with group $S_3$. Since most of the ingredients in the proof are not specific to the case $n=3$, we try to retain generality as long as possible.

\begin{lemma} \label{lemma_0}
Given $n \geq 2$ and $D \geq 1$, let $U_{n-1, D}, \dots, U_{n-1,0}, \dots, U_{1, D}, \dots, U_{1,0}, U_{0, D}, \dots, U_{0,0}$ be algebraically independent indeterminates, and denote by $\underline{U}$ the vector consisting of these variables.
Let $\Delta(T)\in \qq[\underline{U}][T]$ be the discriminant (with respect to $Y$) of the polynomial
$$F(\underline{U},T,Y)=Y^n + \bigg(\sum_{j=0}^D U_{n-1, j} T^j \bigg) Y^{n-1} + \cdots + \bigg(\sum_{j=0}^D U_{1,j} T^j \bigg) Y + \sum_{j=0}^D U_{0,j} T^j.$$
Then $\Delta(T)$ is irreducible over $\Qq(\underline{U})$.
\end{lemma}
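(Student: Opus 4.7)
The plan is to show irreducibility of the hypersurface $V(\Delta) \subset \mathbb{A}^{n(D+1)+1}_{T, \underline{U}}$ and then to rule out the possibility that $\Delta$ is a higher power of an irreducible factor via a well-chosen specialization. First I would introduce the incidence variety
$$W := \{(T,Y,\underline{U}) \, : \, F(\underline{U},T,Y) = 0 = F_Y(\underline{U},T,Y)\} \subset \mathbb{A}^{n(D+1)+2}.$$
Both defining equations are affine-linear in the $U_{i,j}$, since $F$ involves $U_{i,j}$ with coefficient $T^j Y^i$ (and constant part $Y^n$) while $F_Y$ involves $U_{i,j}$ with coefficient $i T^j Y^{i-1}$ (and constant part $n Y^{n-1}$). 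The coefficients of the pair $(U_{0,0}, U_{1,0})$ form the matrix $\begin{pmatrix} 1 & Y \\ 0 & 1 \end{pmatrix}$, whose determinant is identically $1$. Hence, for every $(T,Y)$ and every assignment of the remaining $U_{i,j}$, the system determines $(U_{0,0}, U_{1,0})$ uniquely; projection away from these two coordinates gives an isomorphism $W \cong \mathbb{A}^{n(D+1)}$, and in particular $W$ is irreducible.

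Next I would project $\pi : W \to \mathbb{A}^{n(D+1)+1}_{T, \underline{U}}$ by forgetting $Y$. A point $(T, \underline{U})$ lies in the image iff $F(\underline{U}, T, \cdot)$ admits a double root, iff $\Delta(T, \underline{U}) = 0$, so the Zariski closure of $\pi(W)$ equals $V(\Delta)$. Hence $V(\Delta)$ is irreducible, which forces $\Delta = c \cdot P^k$ for some $c \in \qq^\times$, some positive integer $k$, and some irreducible $P \in \qq[T, \underline{U}]$.

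To conclude that $k = 1$, I would specialize at $\underline{u}_0$ given by $U_{0,0} = U_{1,1} = 1$ and all other $U_{i,j} = 0$ (a valid choice since $n \geq 2$ and $D \geq 1$). The corresponding $F$ becomes the trinomial $Y^n + T Y + 1$, whose classical discriminant formula yields
$$\Delta(T, \underline{u}_0) = (-1)^{n(n-1)/2} \bigl( n^n + (-1)^{n-1} (n-1)^{n-1} T^n \bigr).$$
Since $n \geq 2$, both the leading and constant coefficients are non-zero, so $\Delta(T, \underline{u}_0)$ has $T$-degree $n$ and is squarefree (its roots are the $n$ distinct $n$-th roots of a non-zero number). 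If $k \geq 2$, then $P(T, \underline{u}_0)^k$ would either be a non-zero constant or have every root of multiplicity at least $2$, contradicting positive degree together with squarefreeness. Hence $k = 1$, so $\Delta$ is irreducible in $\qq[T, \underline{U}]$; since $\deg_T \Delta > 0$, Gauss's lemma then yields irreducibility in $\qq(\underline{U})[T]$. The main subtlety is the irreducibility of $W$, which hinges on the identically-unimodular $2 \times 2$ minor supplied by $(U_{0,0}, U_{1,0})$.
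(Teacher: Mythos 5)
Your proof is correct, but it takes a genuinely different route from the paper. The paper's argument is a two-step reduction: it cites the classical fact (from Gelfand--Kapranov--Zelevinsky) that the discriminant of the generic monic polynomial $Y^n+U_{n-1,0}Y^{n-1}+\cdots+U_{0,0}$ is irreducible in $\Qq[U_{0,0},\dots,U_{n-1,0}]$, and then observes that $F(\underline{U},T,Y)$ is obtained from that generic polynomial by the substitution $U_{i,0}\mapsto U_{i,0}+U_{i,1}T+\cdots+U_{i,D}T^D$, which is a ring automorphism of $\Qq[\underline{U},T,Y]$ and hence preserves irreducibility; Gauss's lemma finishes as in your write-up. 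Your argument instead re-proves the needed irreducibility from scratch in the parametrized setting: the incidence variety $W=\{F=F_Y=0\}$ is an affine space (hence geometrically irreducible) thanks to the identically unimodular minor in the coordinates $(U_{0,0},U_{1,0})$, so $V(\Delta)=\overline{\pi(W)}$ is irreducible and $\Delta=cP^k$; the specialization to the trinomial $Y^n+TY+1$, whose discriminant is squarefree of degree $n$ in $T$, forces $k=1$. Both proofs are sound. The paper's is shorter but leans on an external citation and the (easy but unstated) fact that an irreducible polynomial in a subset of the variables stays irreducible in the larger polynomial ring; yours is self-contained, at the cost of the standard bookkeeping about passing between irreducibility of the hypersurface over $\overline{\Qq}$ and the factorization of $\Delta$ over $\Qq$, and of needing the explicit trinomial discriminant to rule out a repeated factor --- a step that has no counterpart in the paper because the cited GKZ statement already gives irreducibility of the element, not merely of its zero locus.
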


\begin{proof}
It is well-known that the discriminant of the polynomial
$$Y^n + U_{n-1,0} Y^{n-1} + \cdots + U_{1,0} Y + U_{0,0}$$ is irreducible as an element of $\qq[U_{0,0}, U_{1,0}, \dots,U_{n-1,0}]$ (see, e.g., \cite[page 15]{GKZ94}). The polynomial $F(\underline{U}, T, Y)$ arises from this polynomial after applying the map sending 
$U_{i,0}$
to 
$$U_{i,0} + (U_{i, 1} T + U_{i,2} T^2 +  \cdots + U_{i,D} T^D)$$ 
for each $i \in \{0, \dots, n-1\}$ and fixing all other indeterminates. Since this corresponds to an automorphism of the ring $\qq[\underline{U}, T, Y]$,
the discriminant $\Delta(T)$ must still be irreducible as an element of $\qq[\underline{U}, T]$, and hence also inside $\qq(\underline{U})[T]$ by Gauss' lemma.
\end{proof}

\begin{lemma} \label{lemma_0.5}
Given $n \geq 2$ and $D \geq 1$, consider the set $S$ consisting of all polynomials 
$$P(T,Y) = Y^n + a_{n-1}(T) Y^{n-1} + \cdots + a_1(T) Y + a_0(T)$$
in $\mathcal{Q}(n,D)$ fulfilling the following three conditions:

\vspace{0.5mm}

\noindent
{\rm{(a)}} $P(T,Y)$ has Galois group $S_n$ over $\Qq(T)$,

\vspace{0.5mm}

\noindent
{\rm{(b)}} the discriminant $\Delta(T) \in \Zz[T]$ of $P(T,Y)$ is irreducible,

\vspace{0.5mm}

\noindent
{\rm{(c)}} the polynomial 
$$a_{n-1,D} Y^{n-1} + a_{n-2,D} Y^{n-2} + \cdots + a_{0,D}$$
has degree $n-1$ and is irreducible over $\Qq$, where $a_{i,D}$ denotes the coefficient of $a_i(T)$ at $T^D$ for each $i \in \{0, \dots, n-1\}$.

\vspace{0.5mm}

\noindent
Then one has
$$\frac{|S \cap \mathcal{Q}(n,D,H)|}{|\mathcal{Q}(n,D,H)|} = 1 - O \bigg( \frac{{\log}(H)}{\sqrt{H}} \bigg), \quad H \rightarrow \infty.$$
In particular, the set $S$ has density 1.
\end{lemma}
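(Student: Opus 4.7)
The plan is to prove, separately for each of the three conditions (a), (b), (c), that the set of polynomials in $\mathcal{Q}(n,D,H)$ violating that condition has cardinality $O(H^{n(D+1)-1/2} \log H)$; since $|\mathcal{Q}(n,D,H)| = (2H+1)^{n(D+1)} = \Theta(H^{n(D+1)})$, a union bound then yields the stated rate $O(\log(H)/\sqrt{H})$. The key black-box tool throughout is the quantitative form of Hilbert's irreducibility theorem (due to Cohen--Serre, via the large sieve): for an irreducible polynomial $f(\underline{U},Y) \in \Qq[\underline{U},Y]$ in $m$ indeterminates $\underline{U}$ beyond $Y$, the number of integer specializations $\underline{u} \in \Zz^m$ with $\|\underline{u}\|_\infty \le H$ for which $f(\underline{u},Y)$ becomes reducible in $\Qq[Y]$ (respectively, for which the Galois group collapses to a proper subgroup of the generic one) is $O(H^{m-1/2}\log H)$.

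For condition (a), I would first observe that $F(\underline{U},T,Y)$ has Galois group $S_n$ over $\Qq(\underline{U})(T)$. Indeed, specializing $T \mapsto 0$ yields the polynomial $Y^n + U_{n-1,0}Y^{n-1} + \cdots + U_{0,0}$, the classical generic polynomial of degree $n$, which has Galois group $S_n$ over $\Qq(U_{0,0},\ldots,U_{n-1,0})$; the Galois group of $F$ thus contains (and hence equals) $S_n$. Quantitative HIT with $m = n(D+1)$ then bounds the violating specializations by $O(H^{n(D+1)-1/2}\log H)$. For condition (b), Lemma \ref{lemma_0} directly provides the required irreducibility of $\Delta(T)$ over $\Qq(\underline{U})$, so HIT (applied to $\Delta$ viewed as a polynomial in $T$ over $\Qq(\underline{U})$) yields the same bound. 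One must additionally observe that the specialized integer polynomial $\Delta(T) \in \Zz[T]$ is primitive outside an exceptional set of size $O(H^{n(D+1)-1})$, so that $\Qq$-irreducibility lifts to $\Zz$-irreducibility by Gauss's lemma.

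For condition (c), I would apply HIT to the $n$ indeterminates $a_{0,D},\ldots,a_{n-1,D}$ and the polynomial $a_{n-1,D} Y^{n-1} + \cdots + a_{0,D}$, which is irreducible over $\Qq(a_{0,D},\ldots,a_{n-1,D})$ (witnessed, e.g., by the Eisenstein-irreducible specialization $Y^{n-1} - 2$). This yields $O(H^{n-1/2}\log H)$ bad leading tuples, while the codimension-one degree-drop locus $a_{n-1,D}=0$ contributes only $O(H^{n-1})$ further exceptions. Multiplying by the $O(H^{nD})$ free choices for the remaining coefficients $a_{i,j}$ with $j < D$ gives the claimed bound $O(H^{n(D+1)-1/2}\log H)$ for condition (c), completing the union bound.

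The routine step is the invocation of quantitative HIT for each of the three conditions; the only point requiring genuine care is the verification of the generic irreducibility statements underlying conditions (a) and (c), for which I would use explicit specializations as above. For condition (b) Lemma \ref{lemma_0} is doing all the real work. I do not expect any genuine obstacle beyond bookkeeping, since the main ingredient, the $\sqrt{H}\log H$-savings in quantitative HIT, is perfectly matched to the rate claimed by the lemma.
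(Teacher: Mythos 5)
Your proposal is correct and follows essentially the same route as the paper's proof: a union bound over the three conditions, with Cohen's quantitative Hilbert irreducibility theorem supplying the $O(H^{m-1/2}\log H)$ saving, Lemma \ref{lemma_0} handling the discriminant, and the specialization $T \mapsto 0$ witnessing the generic $S_n$ Galois group. The only imprecision is in (a), where you invoke HIT in all $n(D+1)$ coefficient variables for a conclusion about the Galois group over $\Qq(T)$ (a partial specialization not literally covered by your stated black box); the paper instead applies HIT to the $n$ constant coefficients of $P(0,Y)$ and multiplies by the $O(H^{nD})$ remaining choices, which is exactly the reduction your own $T \mapsto 0$ observation furnishes.
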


\begin{proof}
We estimate the size of the complement $\mathcal{Q}(n,D) \setminus S$. Let $S_{(1)}$ (resp., $S_{(2)}$, $S_{(3)}$) be the subset of $\mathcal{Q}(n,D)$ which consists of all polynomials $P(T,Y)$ which do not fulfill (a) (resp., (b), (c)). It is enough to show that
\begin{equation} \label{eq:set_gen}
\frac{|S_{(j)} \cap \mathcal{Q}(n,D,H)|}{|\mathcal{Q}(n,D,H)|} = O \bigg( \frac{{\log}(H)}{\sqrt{H}} \bigg), \quad H \rightarrow \infty,
\end{equation}
for each $j \in \{1, 2, 3\}$. This is mainly obtained by making use of a sufficiently precise version of Hilbert's irreducibility theorem (namely, \cite[Theorem 2.1]{Coh81b}).

Given algebraically independent indeterminates $T_0, \dots, T_{n-1}$, the polynomial
$$Y^n + T_{n-1} Y^{n-1} + \cdots + T_1 Y + T_0$$
has Galois group $S_n$ over $\Qq(T_0, T_1, \dots, T_{n-1})$. Apply \cite[Theorem 2.1]{Coh81b} to get that the number of tuples $(t_0, t_1, \dots, t_{n-1})$ of integers of absolute value at most $H$ such that 
$$Y^n + t_{n-1} Y^{n-1} + \cdots + t_1 Y + t_0$$
does not have Galois group $S_n$ over $\Qq$ is $O(H^{n-1/2} \cdot {\rm{log}}(H))$ as $H$ tends to $\infty$. Combine this and the fact that if $P(T,Y)$ is such that $P(0,Y)$ has Galois group $S_n$ over $\Qq$, then $P(T,Y)$ has Galois group $S_n$ over $\Qq(T)$ to get that \eqref{eq:set_gen} holds for $j=1$. In the same way, \eqref{eq:set_gen} also holds if $j=2$ (use Lemma \ref{lemma_0}), and if $j=3$.
\end{proof}

\begin{lemma} \label{lemma_1}
Let $n \geq 2$ and $D \geq 1$ be integers. Let $S'$ be the subset of $\mathcal{Q}(n,D)$ which consists of all polynomials $P(T,Y)$ fulfilling the following two conditions:

\vspace{0.5mm}

\noindent
{\rm{(a)}} $P(T,Y)$ defines a regular $\Qq$-$S_n$-cover $f: X \rightarrow \mathbb{P}^1$ of branch points $t_1, \dots, t_r$,

\vspace{0.5mm}

\noindent
{\rm{(b)}} $t_1,\dots, t_r$ are algebraically conjugate.

\vspace{0.5mm}

\noindent
Then one has
$$\frac{|S' \cap \mathcal{Q}(n,D,H)|}{|\mathcal{Q}(n,D,H)|} = 1 - O \bigg( \frac{{\log}(H)}{\sqrt{H}} \bigg), \quad H \rightarrow \infty.$$
In particular, the set $S'$ has density 1.
\end{lemma}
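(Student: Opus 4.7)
The plan is to deduce Lemma~\ref{lemma_1} from Lemma~\ref{lemma_0.5} by showing the inclusion $S\subseteq S'$, where $S$ is the density-$1$ subset of $\mathcal{Q}(n,D)$ produced by Lemma~\ref{lemma_0.5}; the quantitative estimate on $|S'\cap\mathcal{Q}(n,D,H)|/|\mathcal{Q}(n,D,H)|$ then follows at once. Throughout, fix $P(T,Y)\in S$ and let $E$ be its splitting field over $\Qq(T)$, so ${\rm Gal}(E/\Qq(T))=S_n$ by (a) of Lemma~\ref{lemma_0.5}.

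The first step is to verify that all branch points of the associated cover are algebraically conjugate. The finite branch points are precisely the roots of the $Y$-discriminant $\Delta(T)$, and by (b) of Lemma~\ref{lemma_0.5} they form a single ${\rm G}_{\Qq}$-orbit. It therefore suffices to check that $T=\infty$ is not a branch point. Performing the substitution $Y=T^D W$ and studying the limit $T\to\infty$, one sees that the fiber above $\infty$ consists of $n-1$ bounded branches, one at each root of the polynomial $a_{n-1,D}Y^{n-1}+\cdots+a_{0,D}$ appearing in condition (c) of Lemma~\ref{lemma_0.5}, together with a single unbounded branch satisfying $Y\sim -a_{n-1,D}\,T^D$. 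Condition (c) guarantees that this polynomial is irreducible (hence separable, since we are in characteristic zero) and of exact degree $n-1$, so the $n$ branches over $\infty$ are pairwise distinct and each is unramified by Hensel's lemma.

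The second step is to prove regularity. Let $L=E\cap\overline{\Qq}$, so that ${\rm Gal}(L/\Qq)$ is a quotient of ${\rm Gal}(E/\Qq(T))=S_n$. The case $L=E$ is ruled out because a constant extension is unramified everywhere, contradicting the nonempty branch locus afforded by the nonconstancy of $\Delta(T)$. The case ${\rm Gal}(L/\Qq)=\Zz/2\Zz$ would force $L(T)$ to coincide with the unique quadratic subextension $\Qq(T,\sqrt{\Delta(T)})$ of $E/\Qq(T)$, so that $\Delta(T)$ would differ from a square in $\Qq(T)$ by a nonsquare constant in $\Qq^*$; this is incompatible with the irreducibility of $\Delta(T)$ of positive degree. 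For $n=3$ and $n\ge 5$ these exhaust all nontrivial possibilities for ${\rm Gal}(L/\Qq)$, so $L=\Qq$. For the exceptional case $n=4$, one must also exclude the $S_3$-quotient of $S_4$ arising from the Klein four subgroup, which can be handled by adjoining to $S$ the density-$1$ condition that the cubic resolvent of $P(T,Y)$ has non-constant $T$-coefficients, thereby preventing the corresponding resolvent extension from being a constant extension.

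The main obstacle in executing this plan is the regularity step in the exceptional small-$n$ case, which requires a Galois-theoretic detour (through the cubic resolvent for $n=4$) and a separate verification that the added condition still holds on a density-$1$ subset; this last point is a routine dimension count in the coefficient space $\mathcal{Q}(n,D,H)$, completely analogous to the arguments used in the proof of Lemma~\ref{lemma_0.5}. Everything else reduces to Lemma~\ref{lemma_0.5} combined with the local analysis at $T=\infty$ sketched above.
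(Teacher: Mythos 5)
Your overall route is the same as the paper's: take the density-one set $S$ produced by Lemma \ref{lemma_0.5}, show it is contained in $S'$, and let the quantitative estimate follow; your local analysis at $T=\infty$ via condition (c) is essentially the paper's substitution $U=1/T$ in $Q(U,Y)=U^DP(1/U,Y)$. Where you diverge is the regularity step. The paper uses a single uniform argument: the geometric Galois group $\overline{G}={\rm Gal}(E\overline{\Qq}/\overline{\Qq}(T))$ is a \emph{normal} subgroup of ${\rm Gal}(E/\Qq(T))=S_n$, and condition (b) forces $\Delta(T)$ to be a non-square in $\overline{\Qq}(T)$ (an irreducible polynomial of positive degree is separable, hence squarefree over $\overline{\Qq}$), so $\overline{G}\not\subseteq A_n$; since every proper normal subgroup of $S_n$ lies in $A_n$ (including $V_4\subseteq A_4$), $\overline{G}=S_n$ for all $n$ at once. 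Your enumeration of the quotients of $S_n$ is the same idea run from the top, but it forces a special treatment of $n=4$, and that is where the proposal fails.

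The $n=4$ patch is a genuine gap. If $\overline{G}=V_4$, then the splitting field of the cubic resolvent is a constant extension $L(T)$ with ${\rm Gal}(L/\Qq)\cong S_3$; its roots lie in $\mathcal{O}_L[T]$ and can perfectly well be non-constant polynomials, so the resolvent's coefficients can be non-constant (e.g. a cubic with roots $\sigma(\alpha)+T$, $\sigma$ ranging over the conjugates of a cubic irrationality $\alpha$, has non-constant coefficients and constant splitting field). Hence "the resolvent has non-constant $T$-coefficients" does not exclude the $S_3$-quotient, and your added condition does not close the case. No extra condition is needed anyway: $V_4\subseteq A_4$, so $\overline{G}=V_4$ would make $\Delta(T)$ a square in $\overline{\Qq}(T)$, which condition (b) forbids. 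Two smaller imprecisions in the same step: the finite branch points are \emph{contained in}, not equal to, the set of roots of $\Delta(T)$ (containment suffices, since a nonempty ${\rm G}_\Qq$-stable subset of the roots of an irreducible polynomial is the full set of roots); and ruling out $L=E$ by "non-constancy of $\Delta$ gives a nonempty branch locus" is not quite right, since a root of $\Delta$ need not be a branch point of the normalized cover --- but if $E\overline{\Qq}=\overline{\Qq}(T)$ then $P$ splits into linear factors over $\overline{\Qq}[T]$ and $\Delta$ is a square there, again contradicting (b).
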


\begin{proof}
It suffices to show that the set $S$ provided by Lemma \ref{lemma_0.5} is a subset of $S'$. Let $P(T,Y)$ be in $S$ and $\Delta(T) \in \Zz[T]$ its discriminant. By Condition (b) of Lemma \ref{lemma_0.5}, $\Delta(T)$ cannot be a square in $\overline{\Qq}(T)$, i.e., the Galois group $\overline{G}$ of $P(T,Y)$ over $\overline{\Qq}(T)$ is not contained in $A_n$. Since $\overline{G} \trianglelefteq S_n$ (by Condition (a) of Lemma \ref{lemma_0.5}), we can conclude that $\overline{G} =S_n$, thus leading to (a). As for (b), it suffices to show that $\infty$ is not a branch point of $f$ (by the second part of Condition (b) of Lemma \ref{lemma_0.5}), since every finite branch point of $f$ is a root of $\Delta(T)$. 

Setting $U=1/T$, consider the polynomial
$$Q(U,Y)=U^D P(1/U,Y) =U^D Y^n + b_{n-1}(U) Y^{n-1} + \cdots + b_1(U) Y + b_0(U),$$
where $b_i(U) = a_i(1/U) U^D$, $i \in \{0, \dots, n-1\}$. Since $Q(0,Y)$ is separable (even irreducible) of degree $n-1$ (by Condition (c) of Lemma \ref{lemma_0.5}), $U=0$ has $n$ distinct preimages under
the degree $n$ regular $\Qq$-cover of $\mathbb{P}^1$ defined by $P(T,Y)$ (namely, $n-1$ distinct points with finite $Y$-coordinate, and one more infinite point). It is therefore unramified at $U=0$, as is its Galois closure $f$. This concludes the proof.
\end{proof}

\begin{lemma} \label{lemma_2}
Let $E/\Qq(T)$ be a regular $G$-extension all of whose branch points are algebraically conjugate. Then there exists a positive density set $\mathcal{S}_0$ of prime numbers such that all specializations of $E/\Qq(T)$ are unramified at all the primes in $\mathcal{S}_0$.
\end{lemma}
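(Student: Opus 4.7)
The plan is to combine Theorem \ref{beckmann} with the Chebotarev density theorem: since all branch points of $E/\Qq(T)$ are Galois conjugates, they share one irreducible minimal polynomial $P(U,V) \in \Zz[U,V]$, and Beckmann's theorem reduces the lemma to the existence of positively many primes $p$ for which $P$ has no projective $\mathbb{F}_p$-root. To set up, if $G = \{1\}$ the statement is vacuous, so I assume $G \ne \{1\}$, which forces $r \geq 2$ branch points by the Riemann existence theorem. Because the $t_i$ form a single $\mathrm{G}_\Qq$-orbit, and no Galois orbit of size $\geq 2$ can contain the $\Qq$-rational point $\infty$, the branch points are all finite and share a single irreducible homogeneous minimal polynomial $P(U,V) \in \Zz[U,V]$ of degree $r$.

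Next, I would invoke Theorem \ref{beckmann}(a) with this common $P$: every prime $p \notin \mathcal{S}_{\mathrm{exc}}$ that ramifies in some specialization $E_{t_0}/\Qq$ must divide $P(u,v)$, where $t_0 = u/v$ is in lowest terms. Thus, excluding the finitely many primes in $\mathcal{S}_{\mathrm{exc}}$ together with those dividing the leading $U$- and $V$-coefficients of $P$, every prime $p$ such that $P(X,1) \in \mathbb{F}_p[X]$ has no root in $\mathbb{F}_p$ is unramified in every specialization and hence lies in the desired set $\mathcal{S}_0$.

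To produce such primes in positive density, I let $L/\Qq$ be the splitting field of $P(X,1)$, set $\Gamma = \mathrm{Gal}(L/\Qq)$, and note that $\Gamma$ acts transitively on the $r \geq 2$ roots of $P(X,1)$. By a classical theorem of Jordan, any transitive action of a finite group on a set of size $\geq 2$ admits a fixed-point-free element $\sigma \in \Gamma$. The Chebotarev density theorem then guarantees that the primes $p$ unramified in $L$ with Frobenius class equal to that of $\sigma$ form a set of positive density; for every such $p$, the factorization of $P(X,1) \bmod p$ mirrors the cycle structure of $\sigma$ and so has no linear factor, meaning $P(X,1)$ has no $\mathbb{F}_p$-root. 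These primes lie in $\mathcal{S}_0$, completing the argument. No step looks genuinely hard: the only care needed is in bookkeeping the finitely many excluded primes, and in correctly matching the Beckmann condition ``$p \mid P(u,v)$ for some coprime $(u,v)$'' with the existence of a linear factor of $P(X,1) \bmod p$ (which, after removing primes dividing the $U$- and $V$-leading coefficients, is automatic).
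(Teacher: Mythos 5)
Your proposal is correct and follows essentially the same route as the paper: reduce via Beckmann's theorem (Theorem \ref{beckmann}) to showing that a positive density set of primes are not prime divisors of the common minimal polynomial of the branch points, then produce such primes by combining Chebotarev with a fixed-point-free element of the transitive Galois group of its splitting field (Jordan's theorem). Your additional bookkeeping (excluding $\infty$ as a branch point, handling leading coefficients and the finitely many exceptional primes) is sound and merely makes explicit what the paper leaves implicit.
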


\begin{proof}
Let $R(T) \in \Qq[T]$ be the minimal polynomial over $\Qq$ of the branch points of $E/\Qq(T)$, $F$ the splitting field of $R(T)$ over $\Qq$, and $G={\rm{Gal}}(F/\Qq)$. Then $G$ is transitive, and so there exists an element $\sigma$ of $G$ fixing no branch point of $E/\Qq(T)$. Let $\mathcal{S}_0$ denote the set of all prime numbers $p$ such that the Frobenius associated with $p$ in $F/\Qq$ is conjugate in $G$ to $\sigma$. By the Chebotarev density theorem, $\mathcal{S}_0$ has density $\alpha=|C_\sigma|/|G| \in ]0,1[$, with $C_\sigma$ the conjugacy class of $\sigma$ in $G$. Moreover, by the definition of $\mathcal{S}_0$, no prime number $p \in \mathcal{S}_0$ (possibly up to finitely many exceptions) is a {\it{prime divisor}} of $R(T)$, that is, there exist no $t_0 \in \Qq$ such that $v_p(R(t_0)) >0$. Theorem \ref{beckmann} then yields that, for every prime number $p \in \mathcal{S}_0$ (possibly up to finitely many exceptions), no specialization of $E/\Qq(T)$ ramifies at $p$.
\end{proof}

A ``moral" implication of Lemma \ref{lemma_2} is that, for covers $f$ as in Lemma \ref{lemma_1}, the set ${\textrm{Sp}}(f)$ cannot be too large. Turning this into a precise statement depends on precise knowledge about the distribution of $S_n$-extensions of $\Qq$, which, in general, is a very difficult problem. For the special case $n=3$, however, we have the following result:

\begin{theorem} \label{thm:S3}
Given a positive integer $D$, consider the set $S$ of all polynomials $P(T,Y) \in \mathcal{Q}(3,D)$ fulfilling the following two conditions:

\vspace{0.5mm}

\noindent
{\rm{(a)}} $P(T,Y)$ defines a regular $\qq$-$S_3$-cover $f : X \rightarrow \mathbb{P}^1$,

\vspace{0.5mm}

\noindent
{\rm{(b)}} there exists a positive constant $\alpha$ such that
$$\frac{|{\rm{Sp}}(f) \cap \mathcal{\overline{S}}(S_3,x)|}{|\mathcal{\overline{S}}(S_3,x)|}=O({\log}^{-\alpha}(x))$$
as $x$ tends to $\infty$ (in particular, the set ${\rm{Sp}}(f)$ has density 0).

\vspace{0.5mm}

\noindent
Then one has
$$\frac{|S \cap \mathcal{Q}(3,D,H)|}{|\mathcal{Q}(3,D,H)|} = 1 - O \bigg( \frac{{\log}(H)}{\sqrt{H}} \bigg)$$
as $H$ tends to $\infty$. In particular, the set $S$ has density 1.
\end{theorem}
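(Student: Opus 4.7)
The plan is to combine Lemmas \ref{lemma_1} and \ref{lemma_2} with the Davenport--Heilbronn theorem on the density of non-cyclic cubic fields of $\qq$, together with a uniform version with local (ramification) conditions, to pass from the positive-density set of ``forbidden'' ramified primes produced by Lemma \ref{lemma_2} to a quantitative density-zero conclusion on ${\rm Sp}(f)$.

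First, apply Lemma \ref{lemma_1} with $n=3$: outside a subset of $\mathcal{Q}(3,D,H)$ of proportion $O(\log(H)/\sqrt{H})$, every polynomial $P(T,Y)$ defines a regular $\qq$-$S_3$-cover $f:X\to \mathbb{P}^1$ whose finitely many branch points form a single orbit under $G_\qq$. For each such cover, Lemma \ref{lemma_2} provides a set $\mathcal{S}_0=\mathcal{S}_0(f)$ of prime numbers of positive Chebotarev density $\delta=\delta(f)>0$ such that no specialization of $f$ ramifies at any prime in $\mathcal{S}_0$. Hence ${\rm Sp}(f)$ is contained in the subset of $\mathcal{S}(S_3,\cdot)$ consisting of those $S_3$-extensions of $\qq$ that are unramified at every prime of $\mathcal{S}_0$; since $S_3$ is a complete group, one passes between $\mathcal{S}(S_3,x)$ and $\overline{\mathcal{S}}(S_3,x)$ up to an absolute multiplicative constant, so it suffices to work with $\mathcal{S}(S_3,x)$.

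The core content is then the following density statement: for every set $\mathcal{S}_0$ of primes of positive density $\delta>0$, there exists $\alpha=\alpha(\delta)>0$ such that, as $x\to\infty$,
$$\frac{|\{F\in\mathcal{S}(S_3,x)\ :\ F/\qq\text{ is unramified at every } p\in\mathcal{S}_0\}|}{|\mathcal{S}(S_3,x)|}=O(\log^{-\alpha}(x)).$$
By Davenport--Heilbronn, the denominator is $\asymp x$. For the numerator, one invokes a uniform version of Davenport--Heilbronn with local conditions (as available through work of Belabas--Bhargava--Pomerance and Taniguchi--Thorne): for any finite set $T$ of primes with $\max T\leq x^{\kappa}$ (for some fixed $\kappa>0$),
$$|\{F\in\mathcal{S}(S_3,x)\ :\ F/\qq \text{ unramified at every } p\in T\}|\ll x\cdot\prod_{p\in T}(1-\tau_p),$$
where $\tau_p>0$ is the local proportion of ramified $S_3$-cubic \'etale algebras over $\qq_p$ and satisfies $\tau_p=c/p+O(1/p^2)$ for some absolute constant $c>0$. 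Taking $T=\mathcal{S}_0\cap[2,Y]$ with $Y$ a suitable small power of $x$, Chebotarev's density theorem yields $\sum_{p\in T}1/p\sim\delta\log\log Y$, so a Mertens-type computation gives $\prod_{p\in T}(1-\tau_p)\ll(\log Y)^{-c\delta}\ll(\log x)^{-\alpha}$ for a suitable $\alpha>0$, producing the desired bound.

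The principal obstacle is ensuring that the uniform Davenport--Heilbronn estimate with local conditions is valid on a range of primes $p\leq Y$ large enough (as a power of $x$) to yield a genuine logarithmic, rather than double-logarithmic, saving; in effect, this is precisely what separates Theorem \ref{thm:S3} from a ``soft'' density-zero statement. Fortunately, the cited refinements of Davenport--Heilbronn are precisely of the required strength. Once this is granted, combining Lemma \ref{lemma_1}, Lemma \ref{lemma_2}, and the above density estimate produces the announced proportion $1-O(\log(H)/\sqrt{H})$ of ``good'' polynomials in $\mathcal{Q}(3,D,H)$ with the desired quantitative density-zero conclusion on ${\rm Sp}(f)$.
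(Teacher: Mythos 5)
Your proposal is correct and follows essentially the same route as the paper: Lemma \ref{lemma_1} to reduce to covers whose branch points form a single Galois orbit, Lemma \ref{lemma_2} to produce a positive-density set $\mathcal{S}_0$ of primes avoided by all specializations, and then a quantitative count of $S_3$-cubic fields unramified at every prime of $\mathcal{S}_0$, yielding a saving of $(\log x)^{-\alpha}$ via a product $\prod_{p\in\mathcal{S}_0,\,p\le \cdot}(1-c/p)$. The only (cosmetic) difference is in the final counting input: the paper invokes the Bhargava mass-formula principle for $S_3$ (as established by Bhargava--Wood) together with Serre's th\'eor\`eme 2.3 to handle the simultaneous local conditions at all primes of $\mathcal{S}_0$ up to $x$, whereas you cite uniform Davenport--Heilbronn with local conditions plus a Mertens computation -- the point you flag about uniformity in the range of primes is exactly the issue that the Selberg--Delange-type result of Serre is used to absorb.
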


\begin{proof}
We choose $S'$ as in Lemma \ref{lemma_1} in the case $n=3$. Given $P(T,Y) \in S'$, it suf\-fices to show that (b) holds for the regular $\Qq$-$S_3$-cover $f : X \rightarrow \mathbb{P}^1$ defined by $P(T,Y)$. Indeed, one then has $S' \subseteq S$ and the desired conclusion then follows from Lemma \ref{lemma_1}. Let $\mathcal{S}_0$ be the set of prime numbers provided by Lemma \ref{lemma_2}. Given $x \geq 1$, denote by $\mathcal{S}'(S_3,x)$ the set of all extensions $F/\Qq$ in $\mathcal{S}(S_3,x)$ which ramify only at prime numbers not in $\mathcal{S}_0$. The asymptotic behaviour of the ratio
$$\frac{|\mathcal{S'}(S_3,x)|}{|\mathcal{S}(S_3,x)|}$$
depends on the Bhargava principle (see \cite{Bha07}), which has been established for $S_3$-extensions of $\Qq$ in \cite{BW08}. 
A consequence of the mass formulae featuring in the principle is that, given a prime number $p$, the set of $S_3$-extensions ramifying tamely at $p$ is (either empty or)\footnote{This first case clearly does not happen, as every prime number ramifies tamely in a suitable $S_3$-extension.} of density at least $c/p$, for some positive constant $c$ not depending on $p$. Furthermore, the principle implies that the probabilities of local behaviours of $S_3$-extensions at any given finite set of prime numbers are independent. This yields
$$\frac{|\mathcal{S'}(S_3,x)|}{|\mathcal{S}(S_3,x)|} = O \bigg(\prod_{\substack{p \leq x\\ {p \in \mathcal{S}_0}}} \bigg( 1 - \frac{1}{p} \bigg) \bigg), \quad x \rightarrow \infty.$$
Then, by Lemma \ref{lemma_2} and \cite[th\'eor\`eme 2.3]{Ser76}, there exists some constant $\alpha >0$ such that
$$\frac{|{\rm{Sp}}(E) \cap \mathcal{{S}}(S_3,x)|}{|\mathcal{{S}}(S_3,x)|} \leq \frac{|\mathcal{S'}(S_3,x)|}{|\mathcal{S}(S_3,x)|} = O({\log}^{- \alpha}(x)), \quad x \rightarrow \infty,$$
where $E/\Qq(T)$ denotes the regular $S_3$-extension associated with the cover $f$. Since the map from ${\rm{Sp}}(f)$ to ${\rm{Sp}}(E)$, mapping a morphism to the fixed field of its kernel, has finite fibers of bounded cardinality (with the bound depending only on the order of the underlying Galois group, which is $6$ here), conclude that (b) holds.
\end{proof}

\begin{remark} \label{rem:general_spaces}
The above way of counting covers is not canonical, since the map between polynomials and covers is not $1$-to-$1$. It does however allow natural generalizations. In particular, assume a family of regular $\Qq$-$G$-covers $X \rightarrow \mathbb{P}^1$ is parameterized by an irreducible polynomial $P(T_1, \dots ,T_k,T,Y)$ with algebraically independent indeterminates $T_1, \dots, T_k$. Such a situation occurs whenever the Hurwitz space of covers of a given ramification type happens to be a rational variety. If, in addition, the branch points of such covers can be chosen such that some element of G$_{\mathbb{Q}}$ permutes them {\textit{without fixed point}}, then our arguments apply in the same way. This idea was used in \cite{Koe17a} to show that most rational translates of a fixed regular $\Qq$-$G$ cover of $\mathbb{P}^1$ have a smaller specialization set than the original cover.
\end{remark}

\section{On a local-global principle for specializations} \label{sec:hasse}

This section deals with our local-global principle for specializations, as alluded to in \S\ref{ssec:intro_4}.

\subsection{Statement of the main result}

We first need some terminology and notation.

Given a prime $\mathfrak{p}$ (possibly infinite) of a number field $k$, denote the restriction map ${\rm{G}}_{k_\mathfrak{p}} \rightarrow {\rm{G}}_k$ by ${\rm{res}}_\mathfrak{p}$ (with $k_\mathfrak{p}$ the completion of $k$ at $\mathfrak{p}$). Given a finite group $G$ and an epimorphism $\varphi : {\rm{G}}_k\rightarrow G$, the composed map $\varphi \circ {\rm{res}}_\mathfrak{p} : {\rm{G}}_{k_\mathfrak{p}} \rightarrow G$ is denoted by $\varphi_\mathfrak{p}$.

\begin{definition} \label{def:loc}
Let $f : X \rightarrow \mathbb{P}^1$ be a regular $\Qq$-$G$-cover and $\varphi : {\rm{G}}_{\Qq} \rightarrow G$ an epimorphism.

\vspace{0.5mm}

\noindent
{\rm{(a)}} Say that $\varphi$ is a specialization morphism of $f$ {\it{everywhere locally}} if $\varphi_p$ is a specialization morphism of $f \otimes_\Qq \Qq_p$ for every prime $p$.

\vspace{0.5mm}

\noindent
{\rm{(b)}} Say that $(f, \varphi)$ {\it{fulfills the local-global principle}} if the following implication holds:
$$\varphi \in {\rm{Sp}}(f)^{\rm{loc}} \Longrightarrow \varphi \in {\rm{Sp}}(f),$$
where ${\rm{Sp}}(f)^{\rm{loc}}$ denotes the set of all epimorphisms ${\rm{G}}_{\Qq} \rightarrow G$ as in (a).
\end{definition}

The existence of an epimorphism $\varphi : {\rm{G}}_{\Qq} \rightarrow G$ such that $(f, \varphi)$ does not fulfill the local-global principle means that $\varphi$ does not occur as a specialization morphism of $f$ but this cannot be detected by local considerations. Moreover, note that a similar principle for specializations of regular $G$-extensions of $\Qq(T)$ could be defined.

This theorem is our main contribution to our local-global principle for specializations:

\begin{theorem} \label{thm:hasse_ab}
Let $f:X\to \mathbb{P}^1$ be a regular $\Qq$-$G$-cover with branch points $t_1, \dots, t_r$. Assume the inertia group at some $t_i$ intersects the center of $G$ non-trivially. Let $q$ be the least prime number such that a central element of order $q$ lies in the inertia group at some $t_i$, and let
\begin{equation} \label{eq:beta}
\beta=\frac{q}{(q-1) |G|}.
\end{equation}
Then the following three conclusions hold.

\vspace{0.5mm}

\noindent
{\rm{(a)}} This inequality holds for some positive constant $C(f)$ and every sufficiently large $x$:
$$|{\rm{Sp}}(f)^{\rm{loc}} \cap \overline{\mathcal{S}}(G,x)| \geq C(f) \cdot {x^{\beta} \cdot \log^{-1}(x)}.$$

\vspace{0.5mm}

\noindent
{\rm{(b)}} Assume the abc-conjecture holds and $r \geq 8$.
Then one has
$$\lim_{x\to \infty} \frac{|{\rm{Sp}}(f) \cap \overline{\mathcal{S}}(G,x)|}{|{\rm{Sp}}(f)^{\rm{loc}} \cap \overline{\mathcal{S}}(G,x)|} = 0.$$
In particular, for some positive constant $C'(f)$ and every sufficiently large integer $x$, the number of epimorphisms $\varphi \in \overline{\mathcal{S}}(G,x)$ such that $(f, \varphi)$ does not fulfill the local-global principle is at least $C'(f) \cdot x^{\beta} \cdot \log^{-1}(x)$.

\vspace{0.5mm}

\noindent
{\rm{(c)}} Assume the abc-conjecture and \eqref{eq2} hold (with $S$ equal to the set of all branch points of $f$), and that the inertia group at some $t_i$ contains a central element of order equal to the least prime divisor of $|G|$. Then one has $\beta=\alpha(G)$, where $\alpha(G)$ is defined in \eqref{eq:intro_3}\footnote{In the general case, one has $\alpha(G) \geq \beta > 1/|G| \geq \alpha(G)/2$.}, and 
$$\lim_{x\to \infty} \frac{|{\rm{Sp}}(f) \cap \overline{\mathcal{S}}(G,x)|}{|{\rm{Sp}}(f)^{\rm{loc}} \cap \overline{\mathcal{S}}(G,x)|} = 0.$$
In particular, for some positive constant $C'(f)$ and every sufficiently large integer $x$, the number of epimorphisms $\varphi \in \overline{\mathcal{S}}(G,x)$ such that $(f, \varphi)$ does not fulfill the local-global principle is at least $C'(f) \cdot x^{\alpha(G)} \cdot \log^{-1}(x)$.
\end{theorem}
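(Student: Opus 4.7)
The plan is to prove the three parts in sequence: (a) by an explicit twisting construction that produces many local specialization morphisms, and then (b), (c) by comparing the resulting lower bound against the conditional upper bounds for $|{\rm{Sp}}(f)\cap\overline{\mathcal{S}}(G,x)|$ already established in \S\ref{sec:densityI}.

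For part (a), by Hilbert's irreducibility theorem fix a rational point $t_0 \in \mathbb{P}^1(\Qq) \setminus \{t_1,\dots,t_r\}$ with $\varphi_0 := f_{t_0}$ surjective. Let $z$ be a central element of order $q$ contained in the inertia group at some $t_i$. For every continuous character $\chi:{\rm{G}}_\Qq \to \langle z \rangle \cong \Zz/q\Zz$, set $\varphi_\chi(\sigma):=\varphi_0(\sigma)\cdot \chi(\sigma)$; centrality of $z$ makes $\varphi_\chi$ a group homomorphism, surjective for all but finitely many $\chi$. The key claim is that $\varphi_\chi \in {\rm{Sp}}(f)^{\rm{loc}}$ for every such $\chi$. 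At a prime $p$ unramified in $\chi$, the character $\chi_p$ is an unramified twist which can be absorbed by perturbing $t_0$ inside its $p$-adic residue disk. At a prime $p$ ramified in $\chi$, choose a $p$-adic approximation $t_p$ of the branch point $t_i$: Theorem \ref{beckmann} together with the centrality of $z$ shows that $f_{t_p}$ differs from $(\varphi_0)_p$ by a character of ${\rm{G}}_{\Qq_p}$ valued in $\langle z \rangle$, and this character can be tuned to $\chi_p$ by choosing $t_p$ within a suitable $p$-adic coset. To count, a character $\chi$ of conductor $N$ produces ramification exponent $|G|(1-1/q)$ at each prime dividing $N$ in the field cut out by $\varphi_\chi$, so $|d_{\varphi_\chi}| \ll N^{|G|(q-1)/q}$. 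Since there are $\gg N^{q-1}/\log(N)$ order-$q$ characters of conductor $\leq N$ (classical estimates for $\Zz/q\Zz$-extensions), the number of $\varphi_\chi$ of discriminant at most $x$ is $\gg x^{q/((q-1)|G|)}/\log(x) = x^\beta/\log(x)$, as claimed.

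For part (b), combine (a) with Theorem \ref{thm:abc_spec}: under the abc-conjecture one has $|{\rm{Sp}}(f)\cap\overline{\mathcal{S}}(G,x)| \leq x^{e+\epsilon}$, with $e = 2|G|^{-1}(1-1/e_0)^{-1}(r-2-2/(q_0-1))^{-1}$. The inequality $e < \beta$ rearranges to
$$r - 2 - \frac{2}{q_0-1} \ > \ \frac{2(q-1)\,e_0}{q\,(e_0-1)}.$$
Using $q \geq q_0 \geq 2$ and $e_0 \geq q_0$, the right-hand side equals $2(1-1/q)\cdot e_0/(e_0-1) < 4$ for any finite $q$ and $e_0$, while the left-hand side is at least $r - 4$; thus $r \geq 8$ suffices, and \eqref{eq1} holds so Theorem \ref{thm:abc_spec} applies. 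The ratio in (b) tends to $0$, and the lower bound on counterexamples to the local-global principle follows directly from (a). For part (c), the additional assumption that $z$ may be chosen of order equal to the smallest prime divisor $p$ of $|G|$ gives $q = p$, whence $\beta = p/((p-1)|G|) = \alpha(G)$; assumption \eqref{eq2} is exactly the hypothesis of Corollary \ref{coro:abc+malle}, which under abc supplies $|{\rm{Sp}}(f)\cap\overline{\mathcal{S}}(G,x)| = O(x^{e+\epsilon})$ with $e < \alpha(G)$, and the conclusion follows as in (b).

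The main obstacle is the local claim underlying (a): for each prime $p$, every character $\chi_p:{\rm{G}}_{\Qq_p}\to\langle z\rangle$ must be realised as the ``difference'' between two local specialization morphisms of $f$ at $p$. Morally this is a consequence of Beckmann's description of inertia in specializations, but making it precise requires analysing how $f_{t_p}$ varies as $t_p$ moves through the $p$-adic disk around $t_i$, and showing that the orbit of local specializations under multiplication by $\langle z\rangle$-valued characters is realised geometrically. Centrality of $z$ is essential so that the global pointwise product $\varphi_0\cdot\chi$ is a homomorphism; the local realisability step is the substantive geometric input on which the whole argument hinges.
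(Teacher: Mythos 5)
Your overall architecture coincides with the paper's: fix a Hilbertian specialization $\varphi_0=f_{t_0}$, multiply it by characters valued in a central cyclic subgroup $\langle z\rangle$ of order $q$ coming from an inertia group, count the resulting epimorphisms in $\overline{\mathcal{S}}(G,x)$, and then derive (b) and (c) by comparison with Theorem \ref{thm:abc_spec}. Your reductions for (b) and (c) are correct: the computation showing $e<\beta$ for $r\ge 8$ (via $q\ge q_0\ge 2$, $e_0\ge 2$) and the identification $\beta=\alpha(G)$ under the extra hypothesis of (c) match the paper.

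However, the key local claim in (a) is both overstated and essentially unproved, and this is exactly where the paper does its real work. You assert that $\varphi_\chi=\varphi_0\cdot\chi$ lies in ${\rm{Sp}}(f)^{\rm{loc}}$ for \emph{every} order-$q$ character $\chi$. This is too much: at primes in the exceptional set of Theorem \ref{beckmann} (e.g.\ primes dividing $|G|$ or of bad reduction) and at primes where $\varphi_0$ already ramifies, there is no mechanism to ``absorb'' even an unramified twist by $\chi_\ell$, let alone a ramified one, and the set of local specialization morphisms at such a prime need not be stable under multiplication by $\langle z\rangle$-valued characters. The paper avoids this by using only characters $\varphi(p)$ ramified at a single new prime $p$, with $p$ subject to splitting conditions that force the character to be \emph{trivial} (not merely unramified) at every problematic place, including $\infty$; at all remaining unramified places one invokes \cite[Theorem 1.2]{DG12} (Proposition \ref{prop:DGKLN}(a)). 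At the one ramified prime $p$, your appeal to Theorem \ref{beckmann} plus ``tuning $t_p$ in a suitable coset'' does not suffice: Beckmann only controls the inertia subgroup up to conjugacy, whereas one must realize a specific totally ramified morphism with image exactly $\langle z\rangle$. The paper needs (i) the residue-field splitting condition at $t_i$ to control the unramified part, (ii) a pullback by $V=U^{q/e_i}$ and Abhyankar's lemma to shrink the inertia group at the branch point from $\langle\tau_i\rangle$ down to $\langle z\rangle$, and (iii) the finer specialization result \cite[Theorem 4.4]{KLN19} (Proposition \ref{prop:DGKLN}(b)), followed by a final correction by an automorphism of $\langle z\rangle$ because that result only matches kernels. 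None of this is recoverable from the ingredients you cite, and you yourself flag this step as the unproved hinge. Separately, your count of ``$\gg N^{q-1}/\log N$ order-$q$ characters of conductor $\le N$'' is wrong for $q\ge 3$ (the correct order is about $N$, or $N/\log N$ if one restricts to prime conductors as the paper does); your final exponent $\beta$ is the right one, but it follows from the correct count, not from the one you state.
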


\subsection{Proof of Theorem \ref{thm:hasse_ab}} \label{ssec:proof_hasse}

We break the proof into four parts.

\subsubsection{Preliminaries} 

The proof is based on investigation of the local behaviour of specializations of the regular $G$-extension of $\Qq(T)$ associated with $f$. We shall make use of the following general result, stemming from the two papers \cite{DG12} and \cite{KLN19}:

\begin{proposition} \label{prop:DGKLN}
Let $k$ be a number field, $G$ a finite group, $g : X \rightarrow \mathbb{P}^1$ a regular $k$-$G$-cover, $E/k(T)$ the regular $G$-extension associated with $g$, and $t_1, \dots,t_r$ the branch points of $g$. For $1 \leq i \leq r$, let $(E(t_i))_{t_i}/k(t_i)$ be the residue extension of $E(t_i)/k(t_i)(T)$ at the prime ideal generated by $T - t_i$. Then there exists a finite set $\mathcal{S}_{\rm{exc}}$ of prime ideals of the ring of integers of $k$, containing those prime ideals dividing $|G|$, such that, for every prime ideal $\mathfrak{p}$ not contained in $\mathcal{S}_{\rm{exc}}$ and every epimorphism $\varphi: {\rm{G}}_{k} \to G$, the following conclusions hold.

\vspace{0.5mm}

\noindent
{\rm{(a)}} If $\varphi_\mathfrak{p}$ is unramified, then $g \otimes_k k_\mathfrak{p}$ specializes to $\varphi_\mathfrak{p}$.

\vspace{0.5mm}

\noindent
{\rm{(b)}} If $\varphi_\mathfrak{p}$ is totally ramified with image equal to the inertia group at some $t_i$ and if $\mathfrak{p}$ splits completely in the extension $(E(t_i))_{t_i}/k$, then $g \otimes_k k_\mathfrak{p}$ specializes to some homomorphism $\varphi'(\mathfrak{p}) : {\rm{G}}_{k_\mathfrak{p}} \rightarrow G$ such that $\varphi_\mathfrak{p}$ and $\varphi'(\mathfrak{p})$ have the same kernels.
\end{proposition}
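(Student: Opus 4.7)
The plan is to set up $\mathcal{S}_{\rm{exc}}$ large enough to contain (i) all primes dividing $|G|$, (ii) all primes of bad reduction of $g$ (so the normalization of $\mathbb{P}^1_{\mathcal{O}_k}$ in $E$ is \'etale over the complement of the branch locus), and (iii) all primes ramifying in the compositum of $k$ with the residue fields $(E(t_i))_{t_i}$. Outside $\mathcal{S}_{\rm{exc}}$, all ramification in sight will be tame and the conclusions of Theorem \ref{beckmann} are available. I then treat the two parts separately.

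For (a), suppose $\varphi_\mathfrak{p}$ is unramified, so it factors through ${\rm{G}}_{k_\mathfrak{p}}^{\rm{ur}} \cong \hat{\mathbb{Z}}$ and is determined by $\sigma := \varphi_\mathfrak{p}({\rm{Frob}}_\mathfrak{p}) \in G$. The reduction $\overline{g}: \overline{X} \to \mathbb{P}^1_{\kappa(\mathfrak{p})}$ is a regular $\kappa(\mathfrak{p})$-$G$-cover. A geometric Chebotarev / Lang--Weil estimate (valid once $N(\mathfrak{p})$ exceeds a bound depending only on $g$, which I absorb into $\mathcal{S}_{\rm{exc}}$) shows that every $G$-conjugacy class is realized as the Frobenius at some $\kappa(\mathfrak{p})$-rational point of $\mathbb{P}^1_{\kappa(\mathfrak{p})}$ outside the branch locus. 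Hensel's lemma lifts such a point to $t_0 \in k_\mathfrak{p}$; the good reduction hypothesis then guarantees that the specialization of $g\otimes_k k_\mathfrak{p}$ at $t_0$ is unramified with Frobenius $G$-conjugate to $\sigma$, and hence equals $\varphi_\mathfrak{p}$ as a specialization morphism (which is by definition only well-defined up to $G$-conjugation).

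For (b), the assumption that $\mathfrak{p}$ splits completely in $(E(t_i))_{t_i}/k$ implies that $t_i \in k_\mathfrak{p}$ and that the arithmetic data attached to an inertia generator $\tau_i$ of $E\overline{k}/\overline{k}(T)$ at $T - t_i$ is defined over $k_\mathfrak{p}$. For any $t_0 = t_i + \pi u$ with $\pi$ a uniformizer of $k_\mathfrak{p}$ and $u \in \mathcal{O}_{k_\mathfrak{p}}^*$, Theorem \ref{beckmann}(b) gives that the specialization $(E \otimes_k k_\mathfrak{p})_{t_0}/k_\mathfrak{p}$ is totally tamely ramified at $\mathfrak{p}$ with inertia group $\langle \tau_i \rangle = I_i$, where $I_i$ is the inertia group at $t_i$. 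The associated specialization morphism $\varphi'(\mathfrak{p})$ thus surjects onto $I_i$, and matching its kernel to that of $\varphi_\mathfrak{p}$ reduces to matching two totally tamely ramified cyclic extensions of $k_\mathfrak{p}$ of degree $e_i$. Since $\gcd(e_i, N(\mathfrak{p})) = 1$, such extensions are parameterized by $\mathcal{O}_{k_\mathfrak{p}}^*/(\mathcal{O}_{k_\mathfrak{p}}^*)^{e_i}$; using the explicit local description from \cite{DG12} of how the residue field of the specialization depends on $u$, the assignment $u \mapsto \ker\varphi'(\mathfrak{p})$ surjects onto this parameter set, so a suitable choice of $u$ achieves the prescribed kernel.

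The main obstacle will be the surjectivity claim in (b): one must verify that varying $u$ genuinely sweeps out every class in $\mathcal{O}_{k_\mathfrak{p}}^*/(\mathcal{O}_{k_\mathfrak{p}}^*)^{e_i}$ compatibly with the fixed inertia generator $\tau_i$. This is precisely the content of the explicit local formulas from \cite{DG12}, refined in \cite{KLN19}, and is where the nontrivial arithmetic input of the proposition is concentrated. Part (a) is by comparison routine, packaging good reduction with a geometric Chebotarev statement.
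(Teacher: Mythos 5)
Your proposal reconstructs from scratch the two results that the paper merely cites: the paper's own proof is two lines, deducing (a) directly from \cite[Theorem 1.2]{DG12} and (b) from \cite[Theorem 4.4]{KLN19} (the splitting hypothesis on $\mathfrak{p}$ playing the role of the condition $N^{(\mathfrak{p})}=k_\mathfrak{p}$ there), together with the remark that \cite{KLN19} works with fields rather than morphisms, which is exactly why (b) only matches kernels. Your sketch of (a) --- good reduction outside a finite set, a geometric Chebotarev/Lang--Weil count realizing every conjugacy class as a Frobenius at an unramified point of $\mathbb{P}^1_{\kappa(\mathfrak{p})}$, Hensel lifting --- is indeed the mechanism behind \cite[Theorem 1.2]{DG12} and is sound, granted that ``specializes to $\varphi_\mathfrak{p}$'' is read up to conjugation in $G$, as you note. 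Your treatment of (b) is in the right spirit but has two soft spots. First, Theorem \ref{beckmann}(b) controls only the \emph{inertia} group of the specialization at $t_0=t_i+\pi u$; it does not by itself yield that the specialization is \emph{totally} ramified. For that one must argue that the unramified part of the local specialization is governed by the residue extension $(E(t_i))_{t_i}/k$ at the branch point, so that complete splitting of $\mathfrak{p}$ forces the decomposition group to equal the inertia group; you gesture at this (``the arithmetic data \dots is defined over $k_\mathfrak{p}$'') but credit the conclusion to Beckmann, which is not where it comes from. Second, the decisive claim --- that as $u$ ranges over units the kernel of the resulting specialization morphism sweeps out every totally tamely ramified cyclic degree-$e_i$ extension with the prescribed inertia generator, i.e.\ every class in $\mathcal{O}_{k_\mathfrak{p}}^*/(\mathcal{O}_{k_\mathfrak{p}}^*)^{e_i}$ --- is asserted with a pointer to the explicit local formulas of \cite{DG12} and \cite{KLN19} rather than proved, and you say so yourself. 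Since the paper outsources precisely this content to \cite[Theorem 4.4]{KLN19}, your argument is at the same level of rigor as the paper's; as a self-contained proof, however, it leaves the main arithmetic input of (b) unestablished.
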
 

\begin{proof}
(a) follows directly from \cite[Theorem 1.2]{DG12}. As for (b), it is a special case of \cite[Theorem 4.4]{KLN19} (namely, with the assumption $N^{(\mathfrak{p})} = k_\mathfrak{p}$ in the notation there). Note that specialization is worded in terms of fields rather than morphisms in \cite{KLN19}, hence the above conclusion replacing $\varphi_\mathfrak{p}$ by some other morphism with the same kernel.
\end{proof}

We need some notation. Denote the regular $G$-extension of $\Qq(T)$ as\-sociated with $f$ by $E/\Qq(T)$. For $1 \leq i \leq r$, let $(E(t_i))_{t_i}/\Qq(t_i)$ be the residue extension of $E(t_i)/\Qq(t_i)(T)$ at the prime ideal generated by $T - t_i$. 

Let $t_0 \in \mathbb{P}^1(\Qq) \setminus \{t_1, \dots, t_r\}$ be such that the specialization morphism $f_{t_0} : {\rm{G}}_\Qq \rightarrow G$ is surjective; such a $t_0$ exists by Hilbert's irreducibility theorem. The general idea of the proof is to construct, by slightly changing the epimorphism $f_{t_0}$, sufficiently many epimorphisms $\varphi: {\rm{G}}_{\qq}\to G$ that occur as a specialization morphism of $f$ everywhere locally. More precisely, our epimorphisms $\varphi$ will have only one more ramified prime number, compared to $f_{t_0}$. In order to reach the required amount of epimorphisms of bounded discriminant, the newly ramified prime number furthermore needs to have ``small" ramification index. Let $i \in \{1, \dots, r\}$ and $g$ an element of the center of $G$ of order $q$, where $q$ is defined in Theorem \ref{thm:hasse_ab}, such that $g$ is contained in the inertia group at $t_i$.

\subsubsection{Construction of suitable epimorphisms $\varphi : {\rm{G}}_\Qq \rightarrow G$} \label{sssec:construction}

Let $\mathcal{S}_{\rm{exc}}$ be the finite set of prime numbers provided by Proposition \ref{prop:DGKLN}, when applied to the $\Qq$-$G$-cover $f$, $\mathcal{S}$ an arbitrary finite set of prime numbers containing $\mathcal{S}_{\rm{exc}}$, $\mathcal{S}_1$ the set of all prime numbers which ramify in $E_{t_0}/\Qq$, and $p$ a prime number satisfying the following three properties (which depend on $\mathcal{S}$):

\noindent
(i) $p\notin \mathcal{S} \cup \mathcal{S}_1$,

\noindent
(ii) $p$ splits completely in the extension $(E(t_{i}))_{t_{i}} /\Qq$,

\noindent
(iii) $p$ splits completely in $F(\{\sqrt[q]{\ell}  | \ell \in \mathcal{S} \cup \mathcal{S}_1\})/\mathbb{Q}$, where $F=\begin{cases}E_{t_0}(e^{2 i \pi/q}) \text{ if } q\ge 3 \\ E_{t_0}(i) \text{ if } q=2\end{cases}$.

\noindent
In particular, one has $p\equiv 1$ mod $q$ due to (iii).

Let $\varphi(p) :  {\rm{G}}_{\qq}\to \langle g\rangle$ be an epimorphism such that if $L_{(p)}$ denotes the fixed field of the kernel of $\varphi(p)$ in $\overline{\Qq}$, then $L_{(p)}$ is the unique degree $q$ subfield of $\qq(e^{2 i \pi /p})$. Note that the field $L_{(p)}$ embeds into $\Rr$ and the extension $L_{(p)}/\Qq$ ramifies only at $p$ \footnote{In the case $q=2$, we use the fact that $p$ splits in $\mathbb{Q}(i)/\mathbb{Q}$ to ensure that $2$ is unramified in $L_{(p)}/\qq$.}.

Since the ramification loci of $E_{t_0}/\Qq$ and $L_{(p)}/\Qq$ are disjoint (by (i)), the fields $E_{t_0}$ and $L_{(p)}$ are linearly disjoint over $\Qq$. We can therefore consider the direct product homomorphism $\psi(p) = f_{t_0} \times \varphi(p)$; this is an epimorphism from ${\rm{G}}_\qq$ onto $G\times \langle g\rangle$. Let $\Delta$ be the diagonal subgroup of $G\times \langle g\rangle$ generated by $(g,g)$. Note that $\Delta$ is normal as $g$ lies in the center of $G$. Consider the composed map ${\rm{pr}} \circ \psi(p)$, with ${\rm{pr}}$ the canonical projection from $G\times \langle g\rangle$ onto $(G\times \langle g\rangle) / \Delta$. As this quotient group equals $G$ up to canonical isomorphism $g'\mapsto (g',1)\cdot \Delta$, one obtains an epimorphism $\varphi'(p) : {\rm{G}}_\Qq \rightarrow G$.

This  lemma asserts that, up to choosing a suitable set $\mathcal{S}$ and a suitable epimorphism $\varphi(p)$, the above epimorphism $\varphi'(p)$ occurs as a specialization morphism of $f$ everywhere locally:

\begin{lemma} \label{lemma:key}
For some finite set $\mathcal{S} \supseteq \mathcal{S}_{\rm{exc}}$ of prime numbers, depending only on $f$, the following holds. Let $p$ be a prime number satisfying {\rm{(i)}}, {\rm{(ii)}}, and {\rm{(iii)}}. Then there exists an epimorphism $\varphi(p) :  {\rm{G}}_{\qq}\to \langle g\rangle$ with fixed field $L_{(p)}$ as above, and for which the associated epimorphism $\varphi'(p) : {\rm{G}}_\Qq \rightarrow G$ is such that $f \otimes_\Qq \Qq_\ell$ specializes to $\varphi'(p)_\ell$ for every prime $\ell$.
\end{lemma}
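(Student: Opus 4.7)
The plan is to choose $\mathcal{S}$ large enough to contain $\mathcal{S}_{\rm{exc}}$ from Proposition \ref{prop:DGKLN}, all primes dividing $|G|$, plus whatever additional finite set of primes is needed so that each residue extension $(E(t_j))_{t_j}/\Qq$ is unramified outside it; this $\mathcal{S}$ depends only on $f$. We then verify the conclusion one prime $\ell$ at a time, grouping the arguments into four cases: $\ell \notin \mathcal{S} \cup \mathcal{S}_1 \cup \{p\}$, $\ell \in (\mathcal{S} \cup \mathcal{S}_1) \setminus \{p\}$, $\ell = \infty$, and $\ell = p$.

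First I would extract the local consequences of conditions (i)--(iii). Since $F \supseteq E_{t_0}$, condition (iii) forces $p$ to split completely in $E_{t_0}/\Qq$, so $f_{t_0,p}$ is the trivial homomorphism. Next, since $F$ also contains $\Qq(\zeta_q)$ (respectively $\Qq(i)$ when $q=2$), Kummer theory and condition (iii) imply that every $\ell \in \mathcal{S} \cup \mathcal{S}_1$ distinct from $p$ is a $q$-th power modulo $p$; this is equivalent to $\ell$ splitting completely in the unique degree $q$ subfield $L_{(p)}$ of $\Qq(\zeta_p)$, so $\varphi(p)_\ell$ is trivial. Since $L_{(p)} \subseteq \Rr$, the morphism $\varphi(p)_\infty$ is also trivial.

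With these in hand, the easy cases are immediate. For $\ell \notin \mathcal{S} \cup \mathcal{S}_1 \cup \{p\}$, both $f_{t_0,\ell}$ and $\varphi(p)_\ell$ are unramified, so $\varphi'(p)_\ell$ is unramified; since $\ell \notin \mathcal{S}_{\rm{exc}}$, Proposition \ref{prop:DGKLN}(a) yields the claim. For $\ell \in (\mathcal{S} \cup \mathcal{S}_1) \setminus \{p\}$ and for $\ell = \infty$, $\varphi(p)_\ell$ is trivial, hence $\psi(p)_\ell = (f_{t_0,\ell},1)$; a direct computation using the quotient identification $(G \times \langle g \rangle)/\Delta \cong G$ sending $(g',1)\cdot \Delta$ to $g'$ then gives $\varphi'(p)_\ell = f_{t_0,\ell}$, which is a local specialization morphism because $f_{t_0}$ is a global one.

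The heart of the argument is the case $\ell = p$. Here $f_{t_0,p}$ is trivial, so $\varphi'(p)_p$ factors through $\varphi(p)_p$, has image $\langle g \rangle$, and is totally ramified of degree $q$ (as $L_{(p)}/\Qq$ is totally ramified at $p$). One then wants to invoke Proposition \ref{prop:DGKLN}(b), using condition (ii) that $p$ splits completely in $(E(t_i))_{t_i}/\Qq$, to produce a specialization morphism of $f \otimes_\Qq \Qq_p$ with the same kernel as $\varphi'(p)_p$. The main obstacle is that $\langle g \rangle$ is in general a proper central subgroup of the (cyclic) inertia group at $t_i$, so the literal form of Proposition \ref{prop:DGKLN}(b) does not directly apply. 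I would therefore appeal to the more flexible framework of \cite[Theorem 4.4]{KLN19}: concretely, deform $t_i$ within $\Qq_p$ to a point $t' \in \mathbb{P}^1(\Qq_p)$ for which, by Theorem \ref{beckmann}(b), the inertia at $p$ of the specialization $f_{t'}$ is precisely the desired subgroup $\langle g \rangle$, and then arrange, by choosing $\varphi(p)$ appropriately among its $q-1$ twists with fixed field $L_{(p)}$, that the resulting totally ramified $\langle g \rangle$-extension of $\Qq_p$ coincides with $(L_{(p)})_p/\Qq_p$ on the nose rather than merely up to kernel.
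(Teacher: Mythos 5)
Your proposal is correct in outline and coincides with the paper's argument for every prime $\ell \neq p$: the paper isolates exactly your three easy cases in a preliminary lemma (Lemma \ref{lemma:prelim}), using Proposition \ref{prop:DGKLN}(a) for $\ell \notin \mathcal{S}\cup\mathcal{S}_1\cup\{p\}$ and the triviality of $\varphi(p)_\ell$ (via condition (iii), with quadratic reciprocity when $q=2$ and the Kummer/$q$-th power argument when $q$ is odd) for $\ell\in\mathcal{S}\cup\mathcal{S}_1$ and $\ell=\infty$, so that $\varphi'(p)_\ell=(f_{t_0})_\ell$. Where you diverge is the case $\ell=p$. You correctly identify the obstacle (the image $\langle g\rangle$ of $\varphi(p)_p$ is in general a proper subgroup of the inertia group at $t_i$), but you resolve it by appealing to an unspecified ``more flexible'' form of \cite[Theorem 4.4]{KLN19} together with Theorem \ref{beckmann}(b); note that the latter, as stated, concerns specializations at points of $\mathbb{P}^1(\Qq)$ and controls only the inertia group, not the full decomposition group, so it cannot by itself produce the required $t'\in\mathbb{P}^1(\Qq_p)$. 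The paper instead builds a concrete bridge to the \emph{stated} special case of Proposition \ref{prop:DGKLN}(b): after a change of variable it pulls $f$ back along $V\mapsto V^{e_i/q}$ so that, by Abhyankar's lemma, $\langle g\rangle$ becomes \emph{equal to} the inertia group at $\infty$ of the pullback, then base-changes to $k=(E(t_i))_{t_i}$ (where condition (ii) gives $k_{\mathfrak p}=\Qq_p$) and applies Proposition \ref{prop:DGKLN}(b) there, finally absorbing the kernel ambiguity by replacing $\varphi(p)$ with $\sigma\circ\varphi(p)$ exactly as you suggest. This pullback construction also explains why $\mathcal{S}$ must be enlarged to contain the exceptional primes of the auxiliary cover over $k$, a point your choice of $\mathcal{S}$ does not anticipate. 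So your route is viable in spirit, but to make it rigorous you would either have to quote and verify the hypotheses of the general version of \cite[Theorem 4.4]{KLN19} for the subgroup $\langle\tau_i^{e_i/q}\rangle$, or reproduce something like the paper's pullback device; as written, the crucial step at $\ell=p$ rests on an assertion you have not actually justified.
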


\subsubsection{Proof of Theorem \ref{thm:hasse_ab} under Lemma \ref{lemma:key}}

Let $\mathcal{S}$ be a finite set of prime numbers as given by Lemma \ref{lemma:key}. To prove (a), we estimate the number of epimorphisms $\varphi'(p) : {\rm{G}}_\Qq \rightarrow G$ provided by Lemma \ref{lemma:key}, when $p$ runs through the set of all prime numbers satisfying (i), (ii), and (iii). Let $p$ be such a prime number. Since $E_{t_0}$ and $L_{(p)}$ are linearly disjoint over $\Qq$ and as the discriminants $d_{E_{t_0}}$ and $d_{L_{(p)}}$ of $E_{t_0}$ and $L_{(p)}$, respectively, are coprime, one has 
$$|d_{E_{t_0} L_{(p)}}| = |d_{E_{t_0}}|^q \cdot |d_{L_{(p)}}|^{|G|}.$$
Moreover, as $L_{(p)}/\Qq$ is Galois of degree $q$ and ramifies only at $p$, one has $|d_{L_{(p)}}| = p^{q-1}.$ Combine this equality and the fact that $g$ has order $q$ to get that if $L'_{(p)}$ denotes the fixed field of ${\rm{ker}}(\varphi'(p))$ in $\overline{\Qq}$, then $$|d_{L'_{(p)}}| \leq C_1 \cdot p^{\frac{q-1}{q}|G|},$$ 
where $C_1 = |d_{E_{t_0}}|$ depends only on $f$. Furthermore, as $L'_{(p)}/\Qq$ ramifies at $p$ (with ramification index $q$) and is unramified outside $\mathcal{S}_1 \cup \{p\}$, one has 
$L'_{(p_1)} \not= L'_{(p_2)}$
for distinct prime numbers $p_1$ and $p_2$ as above. Finally, as the set of all prime numbers $p$ fulfilling (i), (ii), and (iii) is a positive density subset of the set of all prime numbers, there are asymptotically at least 
$C_2\cdot x \cdot \log^{-1}(x)$
such epimorphisms $\varphi'(p)$ with $p \leq x$, for some positive constant $C_2$ depending only on $f$. In total, the number of such epimorphisms $\varphi'(p)$ with $|d_{L'_{(p)}}| \leq x$ is then asymptotically at least 
$$C_3 \cdot x^{\beta} \cdot {\rm{log}}^{-1}(x),$$
where $C_3$ is a positive constant depending only on $f$. This completes the proof of (a).

As for (b), suppose the abc-conjecture holds and $r \geq 8$. From the latter assumption and the definition of $\beta$, the exponent $e$ defined in \eqref{eq1.5} (with $S$ equal to the set of all branch points of $f$) satisfies $e < \beta(G)$. Pick $\epsilon >0$ with $e + \epsilon < \beta(G)$. Then combine (a) and Theorem \ref{thm:abc_spec} to obtain that
$$\frac{|{\rm{Sp}}(f) \cap \overline{\mathcal{S}}(G,x)|}{|{\rm{Sp}}(f)^{\rm{loc}} \cap \overline{\mathcal{S}}(G,x)|} = O({\log(x)} \cdot x^{e+ \epsilon - \beta(G)}) = o(1), \quad x \rightarrow \infty.$$

Finally, under the assumptions in (c), $q$ is the smallest prime divisor of $|G|$ and one has $\beta=\alpha(G)$. Moreover, in this case, it suffices to check $e < \alpha(G)$ in the proof of (b) above to get the desired conclusion. As seen in the proof of Corollary \ref{coro:abc+malle}, this inequality holds if and only if \eqref{eq2} holds.

\subsubsection{Proof of Lemma \ref{lemma:key}} 

We first prove the following statement:

\begin{lemma} \label{lemma:prelim}
Fix a finite set $\mathcal{S}$ of prime numbers containing $\mathcal{S}_{\rm{exc}}$, a prime number $p$ satisfying {\rm{(i)}}, {\rm{(ii)}}, and {\rm{(iii)}}, and an epimorphism $\varphi(p) :  {\rm{G}}_{\qq}\to \langle g\rangle$ with fixed field $L_{(p)}$ as in \S\ref{sssec:construction}. Then the associated epimorphism $\varphi'(p) : {\rm{G}}_\Qq \rightarrow G$ is such that $\varphi'(p)_\ell$ is a specialization morphism of $f \otimes_\Qq \Qq_\ell$ for every prime $\ell \ne p$.
\end{lemma}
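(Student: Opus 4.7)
The plan is to verify, prime by prime, that $\varphi'(p)_\ell$ lies in $\mathrm{Sp}(f\otimes_\Qq\Qq_\ell)$ for every $\ell\neq p$. The key structural observation that drives everything is: whenever the ``twist factor'' $\varphi(p)_\ell$ happens to be trivial, unwinding the definition $\varphi'(p)=\mathrm{pr}\circ(f_{t_0}\times\varphi(p))$ through the canonical isomorphism $(G\times\langle g\rangle)/\Delta\cong G$, $(g',1)\Delta\mapsto g'$, yields the equality $\varphi'(p)_\ell=(f_{t_0})_\ell$; and the right-hand side is manifestly a specialization morphism of $f\otimes_\Qq\Qq_\ell$ via the point $t_0\in\Qq\subseteq\Qq_\ell$. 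So my task reduces to (a) showing that $\varphi(p)_\ell$ is trivial at the ``bad'' primes, and (b) verifying that $\varphi'(p)_\ell$ is unramified at the remaining finite primes so that Proposition \ref{prop:DGKLN}(a) applies.

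For the finite primes $\ell\in(\mathcal{S}\cup\mathcal{S}_1)\setminus\{p\}$, condition (iii) forces $p$ to split completely in $\Qq(\zeta_q,\sqrt[q]{\ell})$. Using that $p\equiv 1\bmod q$ is already built into (iii), standard Kummer theory identifies this with the statement that $\ell$ is a $q$-th power residue modulo $p$. But $L_{(p)}$ is precisely the fixed field inside $\Qq(\zeta_p)$ of the index-$q$ subgroup of $q$-th power residues of $(\Zz/p\Zz)^\times$, so under this identification the Frobenius at $\ell$ in $L_{(p)}/\Qq$ is trivial, i.e.\ $\ell$ splits completely in $L_{(p)}$ and $\varphi(p)_\ell=1$. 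The structural observation then delivers $\varphi'(p)_\ell=(f_{t_0})_\ell\in\mathrm{Sp}(f\otimes_\Qq\Qq_\ell)$. For the finite primes $\ell\notin\mathcal{S}\cup\mathcal{S}_1\cup\{p\}$, the morphism $(f_{t_0})_\ell$ is unramified (since $\ell\notin\mathcal{S}_1$) and $\varphi(p)_\ell$ is unramified (since $L_{(p)}/\Qq$ is ramified only at $p$); hence $\varphi'(p)_\ell$ is unramified, and as $\ell\notin\mathcal{S}_{\mathrm{exc}}\subseteq\mathcal{S}$, Proposition \ref{prop:DGKLN}(a) settles the claim.

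Finally, for $\ell=\infty$ I would show that $L_{(p)}$ is totally real, which forces $\varphi(p)_\infty=1$ and reduces again to $\varphi'(p)_\infty=(f_{t_0})_\infty$. For odd $q$ this is automatic, since complex conjugation acts on $\mathrm{Gal}(L_{(p)}/\Qq)\cong\Zz/q\Zz$ through an element of order dividing $\gcd(2,q)=1$; for $q=2$, the unique quadratic subfield of $\Qq(\zeta_p)$ is totally real iff $p\equiv 1\bmod 4$, which is precisely what the splitting of $p$ in $\Qq(i)\subseteq F$ from (iii) delivers. The only nontrivial input beyond bookkeeping is the Kummer-theoretic identification used in the first case above, which is what ties condition (iii) to the triviality of the local twist $\varphi(p)_\ell$; everything else is a direct unwinding of the definition of $\varphi'(p)$ together with a single invocation of Proposition \ref{prop:DGKLN}(a).
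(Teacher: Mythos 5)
Your proof is correct and follows essentially the same route as the paper's: split off the primes $\ell\notin\mathcal{S}\cup\mathcal{S}_1\cup\{\infty\}$, where unramifiedness of $\varphi'(p)_\ell$ lets Proposition \ref{prop:DGKLN}(a) apply, and at the remaining primes show $\varphi(p)_\ell$ is trivial via condition (iii) so that $\varphi'(p)_\ell=(f_{t_0})_\ell$. The only cosmetic difference is that you treat $q=2$ uniformly with the cyclotomic/Kummer argument (and spell out the reality of $L_{(p)}$ at $\infty$), where the paper invokes quadratic reciprocity separately.
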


\begin{proof}
Let $\ell$ be a prime $\not=p$. Firstly, assume $\ell$ is finite and $\ell \not \in \mathcal{S} \cup \mathcal{S}_1$. By our construction, $\ell$ is unramified in $E_{t_0} L_{(p)}/ \Qq$, and the same holds in the subextension $L'_{(p)}/\Qq$. Therefore, by Proposition \ref{prop:DGKLN}(a) (which can be applied as $\mathcal{S}$ contains $\mathcal{S}_{\rm{exc}}$), $f \otimes_\Qq \Qq_\ell$ specializes to $\varphi'(p)_\ell$.

Secondly, assume $\ell$ is infinite or $\ell \in \mathcal{S} \cup \mathcal{S}_1$. Then $\varphi(p)_\ell$ is trivial. Indeed, for $\ell = \infty$, this is clear since $L_{(p)} \subseteq \mathbb{R}$ by construction. Assume then that $\ell$ is finite. If $q=2$, then it follows from (iii) and the quadratic reciprocity that $\ell$ is totally split in the extension $\Qq(\sqrt{p})/\Qq = L_{(p)}/\Qq$. One may then assume that $q$ is odd. By (iii), $Y^{q} - \ell$ splits completely in $\qq_p$. This means that $\ell$ is a $q$th power in $\qq_p$. In other words, the multiplicative order of $\ell$ in $\mathbb{F}_p$ is a divisor of $(p-1)/q$. Consequently, the Frobenius of $\Qq(e^{2 i \pi /p})/\Qq$ at $\ell$ is of order dividing $(p-1)/{q}$. As the elements of ${\textrm{Gal}}(\qq(e^{2 i \pi /p})/\qq)$ of order dividing $({p-1})/{q}$ act trivially on $L_{(p)}$, we get that the Frobenius of $L_{(p)}/\qq$ at $\ell$ is trivial, thus proving the claim. Therefore, one has $(f_{t_0})_\ell = \psi(p)_\ell= \varphi'(p)_\ell$. In particular, $f \otimes_\Qq \Qq_\ell$ specializes to $\varphi'(p)_\ell$.
\end{proof}

We now proceed to the proof of Lemma \ref{lemma:key}. By Lemma \ref{lemma:prelim}, it suffices to show that $f \otimes_\Qq \Qq_p$ specializes to $\varphi'(p)_p$, under a suitable choice of $\mathcal{S}$ and $\varphi(p)$. This is done by reducing to Proposition \ref{prop:DGKLN}(b). At this stage, choose $\mathcal{S}$ and $\varphi(p)$ arbitrary as above. 

By the definition of $\varphi(p)$ and (i), the induced epimorphism $\varphi(p)_p: {\rm{G}}_{\qq_p}\to \langle g\rangle$ is totally (tamely) ramified. Its image $\langle g\rangle$ is not necessarily the inertia group at some branch point of $f$. However, this holds for a suitable pullback of $f$. 

Indeed, up to applying a change of variable at the beginning of \S\ref{ssec:proof_hasse}, we may assume $\infty \not \in \{t_1, \dots, t_r\}$. With $U=1/(T-t_{i})$, one sees that $\infty$ is a branch point of $E(t_{i})/\Qq(t_{i})(U)$ but 0 is not. Let $e_i$ be the ramification index at $t_i$ and $V=U^{q/e_i}$. Since the extensions $E \overline{\qq}/\overline{\qq}(U)$ and $\overline{\qq}(V)/\overline{\qq}(U)$ have only one common branch point (namely, $\infty$), the fields $E \overline{\qq}$ and $\overline{\qq}(V)$ are linearly disjoint over $\overline{\qq}(U)$. Thus, $E \overline{\qq}(V)/\overline{\qq}(V)$ is still a regular $G$-extension, and the same holds, in particular, for  $E(t_i)(V)/\qq(t_i)(V)$. Let $f' : X' \rightarrow \mathbb{P}^1$ be the associated regular $\Qq(t_{i})$-$G$-cover. By Abhyankar's lemma, $\langle g \rangle$ is the inertia group of $f'$ at $\infty$.

Set $k= (E(t_{i}))_{t_{i}}$ and denote the cover $f' \otimes_{\Qq(t_{i})} k$ by $f''$. If $\mathfrak{p}$ is any prime ideal lying over $p$ in $k/\Qq$, then the completion $k_\mathfrak{p}$ is equal to $\Qq_p$, due to the splitting assumption in (ii). Moreover, as $p$ is totally split in $E_{t_0}/\Qq$ (by (iii)), the restriction $({f_{t_0}})_p$ is trivial, that is, $\varphi(p)_p = \psi(p)_p = \varphi'(p)_p$. Hence, since every specialization morphism of $f'' \otimes_k \Qq_p$ (at a point $t \in \mathbb{P}^1(\Qq_p))$ is a specialization morphism of $f \otimes_\Qq \Qq_p$ (namely, at $1/t^{e_i/q} + t_{i}$), it suffices to show that $f'' \otimes_k \Qq_p$ specializes to $\varphi(p)_p$. 

Choose $\mathcal{S}$ as the set of all prime numbers $\ell$ which are in the already defined set $\mathcal{S}_{\rm{exc}}$ or such that some prime ideal lying over $\ell$ in the extension $k/\Qq$ belongs to the exceptional set provided by Proposition \ref{prop:DGKLN}, when applied to the cover $f''$. By Proposition \ref{prop:DGKLN}(b), $f'' \otimes_k \Qq_p$ specializes to some homomorphism $\varphi''(p) : {\rm{G}}_{\mathbb{Q}_p} \rightarrow G$ such that $\varphi(p)_p$ and $\varphi''(p)$ have the same kernels. In particular, the image of $\varphi''(p)$ is equal to $\langle g \rangle$ and $\varphi''(p) = \sigma \circ \varphi(p)_p$ for some automorphism $\sigma : \langle g \rangle \rightarrow \langle g \rangle$. Then consider the epimorphism $\sigma \circ \varphi(p) : {\rm{G}}_\Qq \rightarrow \langle g \rangle$. The fixed field of the kernel of this epimorphism is equal to that of $\varphi(p)$ and $f'' \otimes_k \Qq_p$ specializes to $(\sigma \circ \varphi(p))_p$, as $(\sigma \circ \varphi(p))_p = \sigma \circ \varphi(p)_p = \varphi''(p)$. Conclude that the lemma holds.

\subsection{On the assumptions of Theorem \ref{thm:hasse_ab}} \label{ssec:hasse_covers}

We exhibit below several explicit situations where covers as in Theorem \ref{thm:hasse_ab} can be constructed.

\subsubsection{Abelian groups} \label{sssec:abelian}

If $G$ is an arbitrary finite abelian group, then the condition on inertia groups in Theorem \ref{thm:hasse_ab}(c) is satisfied for every regular $\Qq$-$G$ cover $f$ of $\mathbb{P}^1$, as the inertia groups at the branch points of $f$ generate $G$. Morever, if $f$ has at least 7 branch points, then \eqref{eq2} holds (see Remark \ref{rk:eq3}). Hence, the conclusion of Theorem \ref{thm:intro_6} follows from Theorem \ref{thm:hasse_ab}(c).

\subsubsection{Extension to some non-abelian groups} \label{sssec:non-ab}

In fact, the same applies for some non-abelian groups $G$ as well. Here are some examples:

\noindent
(a) $G =H \times \zz/2^k\zz$, where $H$ is an arbitrary finite group, and $2^k$ is strictly larger than the highest $2$-power occurring as an element order in $H$,

\noindent
(b) $G=Q_8^{n} \times H$, where $Q_8$ is the quaternion group, $n \geq 1$, and $H$ is abelian.

Indeed, for (a), if $(h_1, g_1), \dots, (h_r, g_r)$ generate $G$, with $h_1, \dots, h_r \in H$ and $g_1, \dots, g_r \in \Zz/2^k \Zz$, then we may assume $g_1$ is of order $2^k$. Thanks to our assumption on $k$, there exists $m \geq 1$ with $h_1^{2^{k-1} + m 2^{k}}=1$. As for (b), suppose $(g_1, h_1), \dots, (g_r, h_r)$ generate $G$, with $g_1, \dots, g_r \in Q_8^n$ and $h_1, \dots, h_r \in H$. We may assume $g_1$ is of order 4. Then $(g_1^2, h_1^2)$ has even order, say $2m$ with $m \geq 1$. Hence, $(g_1^{2m}, h_1^{2m})$ has order 2 and is in the center of $G$.

\subsubsection{Regular Galois groups over $\Qq$ with non-trivial center} \label{sssec:hasse_regular}

Let $G$ be a regular Galois group over $\Qq$ with non-trivial center. Then there exists a regular $\Qq$-$G$-cover of $\mathbb{P}^1$ whose inertia group at some branch point intersects the center of $G$ non-trivially.

Indeed, let $f : X \rightarrow \mathbb{P}^1$ be a regular $\Qq$-$G$-cover, $g$ an element of the center of $G$ of prime order, and $f': X' \rightarrow \mathbb{P}^1$ a regular $\Qq$-$\langle g \rangle$-cover. Up to applying a suitable change of variable, we may assume that the sets of branch points of $f$ and $f'$ are disjoint. Denote the function field extensions of the covers $f$ and $f'$ by $E/\Qq(T)$ and $E'/\Qq(T)$, respectively. Then the fields $E \overline{\Qq}$ and $E' \overline{\Qq}$ are linearly disjoint over $\overline{\Qq}$, that is, the extension $EE'/\Qq(T)$ is a regular $(G \times \langle g \rangle)$-extension. If $E"$ denotes the fixed field of $\langle g \rangle \times \langle g \rangle$ in $EE'$,
then $E"/\Qq(T)$ is a regular $G$-extension, each branch point $t$ of $E'/\Qq(T)$ is a branch point of $E"/\Qq(T)$, and the inertia group of $E"/\Qq(T)$ at $t$ is equal to $\langle g \rangle$. Consequently, the regular $\Qq$-$G$-cover $f"$ of $\mathbb{P}^1$ associated with the extension $E"/\Qq(T)$ satisfies the desired conclusion.

We note for later use that we can simultaneously require that no branch point of $f"$ is $\Qq$-rational and the total number of branch points of $f"$ is arbitrarily large (in particular, at least 8). Indeed, up to replacing $f$ by a suitable pullback of $f$, we may assume no branch point of $f$ is $\Qq$-rational. Moreover, as a consequence of the rigidity method, we may assume the same holds for the cover $f'$ and the number of branch points of $f'$ is arbitrarily large.

\section{Diophantine aspects} \label{sec:diophantine}

In this section, we discuss diophantine aspects of our results, as already alluded to in \S\ref{ssec:intro_5}.

\subsection{Preliminaries} \label{ssec:diophantine_prelim}

Our first aim is to briefly recall the definition and the main properties of the twisted cover from \cite{Deb99a}. See, e.g., \cite[\S2.2]{DG12} for more details. We use below the notation introduced in \S\ref{ssec:basics_1}.

Let $k$ be a field of characteristic zero, $\overline{k}$ an algebraic closure of $k$, $G$ a finite group, $f : X \rightarrow \mathbb{P}^1$ a regular $k$-$G$-cover of branch points $t_1, \dots, t_r$, and $\varphi :{\rm{G}}_k \rightarrow G$ a homomorphism. Denote the right-regular (resp., left-regular) representation of $G$ by $\delta: G \rightarrow S_{|G|}$ (resp., by $\gamma: G \rightarrow S_{|G|}$). Define $\varphi^* : {\rm{G}}_k \rightarrow G$ by $\varphi^*(\sigma) = \varphi(\sigma)^{-1}$ ($\sigma \in {\rm{G}}_k$). Denote the restriction map $\pi_1(\mathbb{P}^1 \setminus \{t_1, \dots,t_r\},t)_k \rightarrow {\rm{G}}_k$ by ${\rm{res}}$
and the multiplication in $S_{|G|}$ by $\times$.

If $\phi : \pi_1(\mathbb{P}^1 \setminus \{t_1, \dots,t_r\},t)_k \rightarrow G$ is the epimorphism corresponding to $f$, consider 
$$\widetilde{\phi}^\varphi : \left \{ \begin{array} {ccc}
\pi_1(\mathbb{P}^1 \setminus \{t_1, \dots,t_r\},t)_k & \longrightarrow & S_{|G|} \\
\theta & \longmapsto & \widetilde{\phi}^\varphi(\theta) = \gamma \circ \phi (\theta) \times \delta \circ \varphi^* \circ {\rm{res}} (\theta).
\end{array} \right. $$
Then the map $\widetilde{\phi}^\varphi$ is a homomorphism with the same restriction to $\pi_1(\mathbb{P}^1 \setminus \{t_1, \dots,t_r\},t)_{\overline{k}}$ as $\phi$, hence corresponds to a regular $k$-cover (not Galois in general), denoted by $\widetilde{f}^{\varphi} : \widetilde{X}^\varphi \rightarrow \mathbb{P}^1$ and called the {\it{twisted cover}} of $f$ by $\varphi$, which satisfies $f \otimes_k \overline{k} = \widetilde{f}^\varphi \otimes_k \overline{k}$. In particular, the covers $f$ and $\widetilde{f}^\varphi$ have the same branch points. 

The following proposition (see \cite[Twisting Lemma 2.1]{DG12}) contains the main property of the twisted cover:

\begin{proposition} \label{tl}
For every $t_0 \in \mathbb{P}^1(k) \setminus \{t_1, \dots, t_r\}$, the following conditions are equivalent:

\noindent
{\rm{(a)}} there exists a $k$-rational point $x_0$ on $\widetilde{X}^\varphi$ such that $\widetilde{f}^\varphi(x_0) = t_0$,

\vspace{0.5mm}

\noindent
{\rm{(b)}} there exists $\omega \in G$ such that the specialization morphism $f_{t_0}$ equals ${\rm{conj}}(\omega) \circ \varphi$.
\end{proposition}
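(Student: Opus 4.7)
The plan is to reduce both conditions (a) and (b) to the same statement about fixed points of a ${\rm{G}}_k$-action on the fiber $(\widetilde{f}^\varphi)^{-1}(t_0)$. The starting observation, a standard feature of the dictionary between finite \'etale covers of $\mathbb{P}^1 \setminus \{t_1,\dots,t_r\}$ and permutation representations of $\pi_1(\mathbb{P}^1\setminus\{t_1,\dots,t_r\},t)_k$, is that for a regular $k$-cover classified by a homomorphism $\psi : \pi_1(\mathbb{P}^1\setminus\{t_1,\dots,t_r\},t)_k\to S_N$ (with $S_N$ acting on a chosen $N$-element set $F$), the $k$-rational points lying over $t_0$ correspond bijectively to the fixed points in $F$ of $\psi\circ s_{t_0}:{\rm{G}}_k\to S_N$. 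First I would recall this dictionary and apply it to $\widetilde{f}^\varphi$, taking $F=G$ and $\psi=\widetilde{\phi}^\varphi$; the identification of the geometric fiber of $\widetilde{f}^\varphi$ with $G$ is inherited from the equality $\widetilde{f}^\varphi\otimes_k\overline{k}=f\otimes_k\overline{k}$, whose right-hand side carries the canonical free $G$-action.

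Next I would make the action explicit. Since $s_{t_0}$ is a section of the restriction map $\pi_1(\mathbb{P}^1\setminus\{t_1,\dots,t_r\},t)_k\to{\rm{G}}_k$, one has ${\rm{res}}\circ s_{t_0}={\rm{id}}_{{\rm{G}}_k}$, and $\phi\circ s_{t_0}=f_{t_0}$ by the definition of the specialization morphism. Unwinding the formula defining $\widetilde{\phi}^\varphi$, using that $\gamma(g)$ is left multiplication by $g$ on $G$ while $\delta(g)$ is right multiplication, and that these two permutations commute inside $S_{|G|}$, I obtain, for every $\sigma\in{\rm{G}}_k$ and $\omega\in G$,
\[
(\widetilde{\phi}^\varphi\circ s_{t_0}(\sigma))(\omega)=\bigl(\gamma(f_{t_0}(\sigma))\times\delta(\varphi(\sigma)^{-1})\bigr)(\omega)=f_{t_0}(\sigma)\,\omega\,\varphi(\sigma)^{-1}.
\]
Consequently, $\omega$ is a simultaneous fixed point of all the $\widetilde{\phi}^\varphi\circ s_{t_0}(\sigma)$, $\sigma\in{\rm{G}}_k$, if and only if $f_{t_0}(\sigma)=\omega\varphi(\sigma)\omega^{-1}=({\rm{conj}}(\omega)\circ\varphi)(\sigma)$ for every $\sigma\in{\rm{G}}_k$; this is exactly condition (b), and both implications $(a)\Leftrightarrow(b)$ are proved at once.

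The main obstacle is bookkeeping. One must track carefully the distinction between the left- and right-regular representations (swapping them would introduce the wrong inverse and interchange the roles of $f_{t_0}$ and $\varphi$), and one must remember that both the section $s_{t_0}$ and the identification of the geometric fiber with $G$ are canonical only up to conjugation by an element of $\pi_1(\mathbb{P}^1\setminus\{t_1,\dots,t_r\},t)_{\overline{k}}$. This inner indeterminacy is precisely what produces the ``up to ${\rm{conj}}(\omega)$'' clause in (b): a different admissible choice of section replaces $f_{t_0}$ by ${\rm{conj}}(\omega_0)\circ f_{t_0}$ for some $\omega_0\in G$, so the existence of a $k$-rational point on $\widetilde{X}^\varphi$ above $t_0$ is equivalent not to the equality $f_{t_0}=\varphi$ on the nose but only to such an equality up to an inner automorphism of $G$, which is exactly how (b) is phrased.
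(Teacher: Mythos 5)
Your argument is correct and is essentially the standard proof of the Twisting Lemma: the paper itself gives no proof of Proposition \ref{tl}, citing it from \cite[Twisting Lemma 2.1]{DG12}, and your reduction of both conditions to the fixed points of $\widetilde{\phi}^\varphi\circ s_{t_0}$ on the fiber identified with $G$, together with the computation $(\widetilde{\phi}^\varphi\circ s_{t_0}(\sigma))(\omega)=f_{t_0}(\sigma)\,\omega\,\varphi(\sigma)^{-1}$, is exactly the argument of that reference. Your closing remarks on the convention for $\delta$ and on the inner indeterminacy of $s_{t_0}$ correctly identify the only delicate points (note in particular that $\delta$ taken as $\omega\mapsto\omega g$ is an anti-homomorphism, which is precisely why $\varphi^*$ rather than $\varphi$ appears in the definition of $\widetilde{\phi}^\varphi$).
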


Furthermore, the twisting operation commutes with extension of scalars: if $k' \supseteq k$, then the twisted cover of $f \otimes_k k'$ by the restriction of $\varphi$ to ${\rm{G}}_{k'}$ \footnote{i.e., by the homomorphism $\varphi \circ {\rm{res}}' : {\rm{G}}_{k'} \rightarrow G$, where ${\rm{res}}' : {\rm{G}}_{k'} \rightarrow {\rm{G}}_k$ is the restriction map.}
is the regular $k'$-cover $\widetilde{f}^\varphi \otimes_k k'$.

Condition (a) of Proposition \ref{tl} leads us to the following terminology:

\begin{definition} \label{def:trivial}
Let $f : X \rightarrow \mathbb{P}^1$ be a regular $k$-cover. Say that a $k$-rational point $x$ on $X$ is {\it{trivial}} if $f(x)$ is a ($k$-rational) branch point of $f$, and {\it{non-trivial}} otherwise.
\end{definition}

\begin{example} \label{ex:quadra}
Let $f : X \rightarrow \mathbb{P}^1$ be a regular $\Qq$-$\Zz/2\Zz$-cover and $P(T) \in \Zz[T]$ separable such that $X$ is the hyperelliptic curve $C_{P(T)} : y^2=P(t)$. Then the set of all epimorphisms $\varphi : {\rm{G}}_{\Qq} \rightarrow \Zz/2\Zz$ is in 1-to-1 correspondence with the set $\mathcal{N}_2$ of all squarefree integers. Given such an integer $d$, the associated twisted curve $\widetilde{X}^{\varphi}$ is the hyperelliptic curve $C_{d \cdot P(T)} : y^2= d \cdot P(t)$. Moreover, trivial points in the sense of Definition \ref{def:trivial} correspond to those defined in \S\ref{ssec:basics_2}, and Proposition \ref{prop:Z/2Z}(b) corresponds to the quadratic case of Proposition \ref{tl}.
\end{example}

\subsection{Global aspects} \label{ssec:diophantine_global}

Let $G$ be a finite group and $f : X \rightarrow \mathbb{P}^1$ a regular $\Qq$-$G$-cover. By \S\ref{ssec:diophantine_prelim}, the set ${\rm{Sp}}(f)$ is the set of all 
homomorphisms $\varphi : {\rm{G}}_\Qq \rightarrow G$ such that the twisted curve $\widetilde{X}^\varphi$ has a non-trivial $\Qq$-rational point. Hence, Theorem \ref{thm:abc_spec} can be rephrased as follows: 

\begin{theorem} \label{thm:abc_spec2}
Let $S \subseteq \mathbb{P}^1(\overline{\qq})$ be a non-empty subset of the set of branch points of $f$, closed under the action of {\rm{G}}$_\Qq$. Assume the abc-conjecture and \eqref{eq1} hold. Then, for every $\epsilon>0$ and every sufficiently large $x$, the number $h(x)$ of all epimorphisms $\varphi : {\rm{G}}_\Qq \rightarrow G$ in $\overline{\mathcal{S}}(G,x)$ such that the twisted curve $\widetilde{X}^\varphi$ has a non-trivial $\Qq$-rational point satisfies 
$$h(x) \leq x^{e + \epsilon},$$
where $e$ is defined in \eqref{eq1.5}.
\end{theorem}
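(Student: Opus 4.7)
The plan is to observe that Theorem \ref{thm:abc_spec2} is essentially a geometric reformulation of Theorem \ref{thm:abc_spec} via the twisted cover formalism, so the proof reduces to matching up the two countings.

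First I would invoke Proposition \ref{tl} (the Twisting Lemma). For a fixed homomorphism $\varphi : {\rm{G}}_\Qq \rightarrow G$, this says that $\widetilde{X}^\varphi$ admits a non-trivial $\Qq$-rational point if and only if there exists $t_0 \in \mathbb{P}^1(\Qq) \setminus \{t_1,\dots,t_r\}$ and some $\omega \in G$ such that the specialization morphism $f_{t_0}$ equals ${\rm{conj}}(\omega) \circ \varphi$. In other words, the class of $\varphi$ modulo the conjugation equivalence used to define $\overline{\mathcal{S}}(G,x)$ coincides with the class of some $f_{t_0} \in {\rm{Sp}}(f)$.

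Next, I would observe that the specialization morphisms $f_{t_0}$ in the definition of ${\rm{Sp}}(f)$ are themselves only defined up to conjugation by elements of $\pi_1(\mathbb{P}^1 \setminus \{t_1,\dots,t_r\}, t)_{\overline{\Qq}}$, and since the epimorphism $\phi$ attached to $f$ is surjective onto $G$, this conjugation descends to conjugation by elements of $G$. Consequently, the set ${\rm{Sp}}(f)$ is naturally a subset of the set of equivalence classes $\overline{\mathcal{S}}(G,x) \cup \{\textrm{non-surjective classes}\}$ modulo precisely the equivalence defining $\overline{\mathcal{S}}(G,x)$. Combining this with the equivalence given by the previous paragraph yields the identity
\[
h(x) = |{\rm{Sp}}(f) \cap \overline{\mathcal{S}}(G,x)|.
\]

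Finally, under the abc-conjecture and condition \eqref{eq1}, Theorem \ref{thm:abc_spec} gives $|{\rm{Sp}}(f) \cap \overline{\mathcal{S}}(G,x)| \leq x^{e+\epsilon}$ for every $\epsilon > 0$ and every sufficiently large $x$, which yields the claimed bound $h(x) \leq x^{e+\epsilon}$. There is no genuine new difficulty here: the only subtlety is being careful that ``non-trivial $\Qq$-rational point'' in Definition \ref{def:trivial} excludes exactly the preimages of the $\Qq$-rational branch points, which is consistent with the fact that the point $t_0$ in Proposition \ref{tl} is chosen outside $\{t_1,\dots,t_r\}$; hence no spurious contribution from branch points enters the count $h(x)$, and the identification with $|{\rm{Sp}}(f) \cap \overline{\mathcal{S}}(G,x)|$ is exact.
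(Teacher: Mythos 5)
Your proposal is correct and coincides with what the paper does: the paper treats Theorem \ref{thm:abc_spec2} as a direct rephrasing of Theorem \ref{thm:abc_spec}, using exactly the identification via Proposition \ref{tl} that $\widetilde{X}^\varphi$ has a non-trivial $\Qq$-rational point if and only if $\varphi$ agrees, up to conjugation in $G$, with some specialization morphism $f_{t_0}$, so that $h(x) = |{\rm{Sp}}(f) \cap \overline{\mathcal{S}}(G,x)|$. Your care about non-trivial points corresponding precisely to $t_0 \notin \{t_1,\dots,t_r\}$ and about conjugation classes is exactly the (implicit) content of \S\ref{ssec:diophantine_global}.
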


Similarly, all other results from \S\ref{sec:densityI} and \S\ref{sec:densityII} with a density zero conclusion can be rewritten with the above diophantine flavour. We leave this to the interested reader.

In the case $G=\Zz/2\Zz$, Theorem \ref{thm:abc_spec2} yields this corollary, which is \cite[Corollary 1]{Gra07}:

\begin{corollary} \label{coro:granville}
Let $P(T) \in \Zz[T]$ be a separable polynomial of degree $ \geq 5$ and $g$ the genus of the hyperelliptic curve $C_{P(T)} : y^2=P(t)$. Assume the abc-conjecture holds. Then, for every $\epsilon>0$ and every sufficiently large $x$, the number $h(x)$ of all squarefree integers $d \in \llbracket -x , x \rrbracket$ \footnote{Recall that $\llbracket -x , x \rrbracket$ denotes the set of all integers between $-x$ and $x$.} such that the twisted curve $C_{d \cdot P(T)} : y^2 = d \cdot P(t)$ has a non-trivial $\Qq$-rational point satisfies 
$$h(x) \leq x^{(1/(g-1)) + \epsilon}.$$
\end{corollary}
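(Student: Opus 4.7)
The plan is to deduce this corollary directly from the quadratic case of Theorem \ref{thm:abc_spec2}, applied to the regular $\qq$-$\zz/2\zz$-cover $f: X \to \mathbb{P}^1$ associated with the function field extension $\qq(T)(\sqrt{P(T)})/\qq(T)$, with $S$ taken to be the full set of branch points of $f$.

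First I would read off the relevant numerical invariants of $f$. By Proposition \ref{prop:Z/2Z}(a), the number $r$ of branch points of $f$ equals $\deg P$ if $\deg P$ is even and $\deg P + 1$ if $\deg P$ is odd, so in either case $r = 2g+2$ by the standard genus formula for the hyperelliptic curve $y^2 = P(t)$. Since every inertia group has order $2$, one has $|G| = e_0 = q_0 = 2$, and formula \eqref{eq1.5} then specialises to $e = 2/(r-4) = 1/(g-1)$. Hypothesis \eqref{eq1} reduces to $r > 4$, i.e., $g \ge 2$, which is precisely ensured by the assumption $\deg P \ge 5$ (cf.\ also Remark \ref{rk:stup}(b)(1)).

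Next I would translate the diophantine count to an epimorphism count. By Example \ref{ex:quadra}, squarefree integers $d$ are in natural bijection with non-trivial continuous epimorphisms $\varphi: \mathrm{G}_\qq \to \zz/2\zz$, the twisted curve $\widetilde X^\varphi$ is precisely $C_{d\cdot P(T)}$, and by Proposition \ref{prop:Z/2Z}(b) (equivalently Proposition \ref{tl}) it carries a non-trivial $\qq$-rational point if and only if $\varphi \in \mathrm{Sp}(f)$. For any squarefree $d$ with $|d| \le x$, the absolute discriminant of $\qq(\sqrt d)$ is at most $4|d| \le 4x$, so the corresponding $\varphi$ lies in $\overline{\mathcal S}(\zz/2\zz, 4x)$; conjugation inside $\zz/2\zz$ is trivial, so the passage to the quotient in the definition of $\overline{\mathcal S}$ is inert here.

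Theorem \ref{thm:abc_spec2} then yields, under the abc-conjecture, for every $\epsilon > 0$ and every sufficiently large $x$,
$$h(x) \;\le\; \bigl|\mathrm{Sp}(f) \cap \overline{\mathcal S}(\zz/2\zz, 4x)\bigr| \;\le\; (4x)^{e + \epsilon/2} \;\le\; x^{1/(g-1) + \epsilon},$$
which is the announced bound. No genuine obstacle is expected: the only point requiring mild care is the mismatch between the bound $|d| \le x$ and the bound on the absolute discriminant of $\qq(\sqrt d)$, which is harmlessly absorbed into the $\epsilon$-slack through the factor $4$.
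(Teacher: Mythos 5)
Your proposal is correct and follows essentially the same route as the paper's own proof: both apply Theorem \ref{thm:abc_spec2} to the cover defined by $Y^2-P(T)$, use Proposition \ref{prop:Z/2Z}(a) together with Example \ref{ex:quadra} to translate between squarefree twists and epimorphisms, and compute $e=2/(r-4)=1/(g-1)$ via $r=2g+2$ (Riemann--Hurwitz). Your extra remark on absorbing the factor $4$ from the discriminant of $\Qq(\sqrt d)$ into the $\epsilon$-slack is a detail the paper leaves implicit, but it is handled correctly.
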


\begin{proof}
Let $f: X \rightarrow \mathbb{P}^1$ be the regular $\Qq$-$\Zz/2\Zz$-cover given by the polynomial $Y^2-P(T)$. By Proposition \ref{prop:Z/2Z}(a), $f$ has $r \geq 6$ branch points. Hence, by Remark \ref{rk:stup}(b), Example \ref{ex:quadra}, and Theorem \ref{thm:abc_spec2}, it suffices to show that the exponent $e$ (with $S$ the set of all branch points of $f$) is equal to $1/(g-1)$. By \eqref{eq1.5}, one has $e=2/(r-4)$. Moreover, one has $2(g-1)=r-4$ by the Riemann-Hurwitz formula. Conclude that the desired equality holds.
\end{proof}

\subsection{On the local-global principle for specializations} \label{ssec:diophantine_local}

For a regular $\Qq$-$G$-cover $f : X \rightarrow \mathbb{P}^1$, \S\ref{ssec:diophantine_prelim} shows that the set ${\rm{Sp}}(f)^{\rm{loc}} \setminus {\rm{Sp}}(f)$, with ${\rm{Sp}}(f)^{\rm{loc}}$ introduced in Definition \ref{def:loc}, is the set of all epimorphisms $\varphi : {\rm{G}}_\Qq \rightarrow G$ such that the twisted curve $\widetilde{X}^\varphi$ has a non-trivial $\Qq_p$-rational point for every prime $p$ but only trivial $\Qq$-rational points. As above, Theorem \ref{thm:hasse_ab} may be worded with this diophantine flavour. We leave this to the interested reader.

Let us rather give an application of our result to the Hasse principle. Recall that a curve $C$ over $\Qq$ fulfills the Hasse principle if the following implication holds:

\vspace{1.5mm}

\centerline{{\it{$C$ has a $\Qq_p$-rational point for every prime $p$}} $\Longrightarrow$ {\it{$C$ has a $\Qq$-rational point.}}}

\vspace{1.5mm}

\noindent
The sole difference between the Hasse principle and the diophantine analog of our local-global principle for specializations is that, since we are interested in covers rather than just abstract curves, we have to disallow rational points extending branch points. However, if we start with a cover with no $\Qq$-rational branch point, then the twisted curves provided by (the diophantine version of) Theorem \ref{thm:hasse_ab} do not fulfill the Hasse principle.

For example, by combining Theorem \ref{thm:hasse_ab} and \S\ref{ssec:hasse_covers}.1-2, we obtain Corollary \ref{thm:hasse_exp} below, which makes Theorem \ref{thm:intro_7} more precise:

\begin{corollary} \label{thm:hasse_exp}
Let $G$ be a finite abelian group or a finite group as in \S\ref{sssec:non-ab}, and let $f:X\to \mathbb{P}^1$ be a regular $\Qq$-$G$ cover with no $\Qq$-rational branch point. Assume the abc-conjecture holds and $f$ has at least 7 branch points. Then, for some positive constant $C(f)$ and  every sufficiently large $x$, the number of epimorphisms $\varphi \in \overline{\mathcal{S}}(G,x)$ such that $\widetilde{X}^\varphi$ does not fulfill the Hasse principle is at least 
$$C(f) \cdot x^{\alpha(G)} \cdot \log^{-1}(x),$$
where $\alpha(G)$ is defined in \eqref{eq:intro_3}.
\end{corollary}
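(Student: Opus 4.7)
The plan is to reduce the statement to Theorem \ref{thm:hasse_ab}(c) via the diophantine dictionary from \S\ref{ssec:diophantine_prelim}. The key observation is that, under the branch point hypothesis on $f$, failure of the local-global principle for a pair $(f,\varphi)$ coincides with failure of the Hasse principle for the twisted curve $\widetilde{X}^\varphi$. So the substantive work is just translating the output of Theorem \ref{thm:hasse_ab}(c) into Hasse-principle language, after verifying that its hypotheses are in force in the two cases at hand.

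First I would spell out the translation. By Proposition \ref{tl}, $\varphi \in \mathrm{Sp}(f)^{\rm{loc}}$ means that $\widetilde{X}^\varphi$ has a non-trivial $\Qq_p$-rational point for every prime $p$, so in particular $\widetilde{X}^\varphi(\Qq_p) \ne \emptyset$ for every $p$. Similarly, $\varphi \notin \mathrm{Sp}(f)$ means that $\widetilde{X}^\varphi$ has no non-trivial $\Qq$-rational point. Since the branch points of $\widetilde{f}^\varphi$ coincide with those of $f$ (see \S\ref{ssec:diophantine_prelim}) and none of these is $\Qq$-rational by hypothesis, any $\Qq$-rational point of $\widetilde{X}^\varphi$ would automatically be non-trivial in the sense of Definition \ref{def:trivial}; hence $\widetilde{X}^\varphi(\Qq) = \emptyset$, and $\widetilde{X}^\varphi$ violates the Hasse principle.

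Next I would check that Theorem \ref{thm:hasse_ab}(c) applies. Since $r \ge 7$, Remark \ref{rk:eq3}(a) ensures that condition \eqref{eq2} holds with $S$ equal to the full branch locus of $f$. It remains to exhibit an inertia group containing a central element of order $p$, where $p$ is the smallest prime divisor of $|G|$. If $G$ is abelian, every element is central, and since the inertia groups at the branch points generate $G$, at least one of them contains an element of order divisible by $p$; a suitable power of that element has order exactly $p$. If $G$ is one of the non-abelian groups from \S\ref{sssec:non-ab}, the required central inertia element has already been produced there.

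Finally, I would invoke Theorem \ref{thm:hasse_ab}(c), which gives a lower bound of the form $C'(f) \cdot x^{\alpha(G)} \cdot \log^{-1}(x)$ for the number of $\varphi \in \overline{\mathcal{S}}(G,x)$ such that $(f,\varphi)$ fails the local-global principle; the equivalence of the first step then transports this bound verbatim to the number of $\varphi$ for which $\widetilde{X}^\varphi$ fails the Hasse principle, yielding the claim with $C(f) = C'(f)$. The only real subtlety is the branch-point bookkeeping needed to identify ``no non-trivial $\Qq$-point'' with ``no $\Qq$-point at all''; once the hypothesis on branch points is used for that, the rest is a direct application of the heavy machinery already in place.
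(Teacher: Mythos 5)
Your proposal is correct and follows essentially the same route as the paper: verify the central-inertia-element and branch-point-count hypotheses of Theorem \ref{thm:hasse_ab}(c) (for abelian $G$ via the fact that the inertia groups generate $G$, for the groups of \S\ref{sssec:non-ab} via the elements constructed there), and then use the twisting dictionary of \S\ref{ssec:diophantine_prelim} together with the absence of $\Qq$-rational branch points to identify failure of the local-global principle for $(f,\varphi)$ with failure of the Hasse principle for $\widetilde{X}^\varphi$. The bookkeeping you highlight (trivial versus arbitrary $\Qq$-points) is exactly the point the paper makes in \S\ref{ssec:diophantine_local}, so nothing is missing.
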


In the special case $G=\Zz/2\Zz$, we have this corollary, which is \cite[Theorem 2]{CW18} and which follows from Corollary \ref{thm:hasse_exp} as Corollary \ref{coro:granville} follows from Theorem \ref{thm:abc_spec2}:

\begin{corollary} \label{coro:CW18}
Let $P(T) \in \Zz[T]$ be a separable polynomial of even degree at least 8 and without any root in $\Qq$. Suppose the abc-conjecture holds. Then there exists a positive constant $C$, depending only on $P(T)$, which satisfies the following. For every sufficiently large $x$, the number of all squarefree integers $d \in \llbracket -x , x \rrbracket$ such that the twisted hyperelliptic curve $C_{d \cdot P(T)} : y^2 = d \cdot P(t)$ does not fulfill the Hasse principle is at least $C \cdot x \cdot \log^{-1}(x)$.
\end{corollary}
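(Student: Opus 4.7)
The plan is to derive this from Corollary \ref{thm:hasse_exp} applied to $G = \Zz/2\Zz$, in direct analogy with the way Corollary \ref{coro:granville} is extracted from Theorem \ref{thm:abc_spec2}. First, I would associate to $P(T)$ the regular $\Qq$-$\Zz/2\Zz$-cover $f : X \to \mathbb{P}^1$ attached to $Y^2 = P(T)$ and then verify the hypotheses of Corollary \ref{thm:hasse_exp}: $\Zz/2\Zz$ is of course abelian, so the only remaining points are that $f$ has no $\Qq$-rational branch point and that $f$ has at least $7$ (indeed $8$) branch points. Both follow at once from Proposition \ref{prop:Z/2Z}(a): since $\deg(P)$ is even and $\geq 8$, the branch points of $f$ are exactly the roots of $P(T)$ (in particular $\infty$ is not a branch point), and by assumption none of these roots lies in $\Qq$.

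Next, I would apply Corollary \ref{thm:hasse_exp}. With $p = 2 = |G|$, the formula \eqref{eq:intro_3} gives $\alpha(\Zz/2\Zz) = 1$, so the corollary provides a positive constant $C(f)$, depending only on $f$ (equivalently, only on $P(T)$), such that for every sufficiently large $x$ the number of epimorphisms $\varphi \in \overline{\mathcal{S}}(\Zz/2\Zz, x)$ for which the twisted curve $\widetilde{X}^\varphi$ fails the Hasse principle is at least $C(f) \cdot x \cdot \log^{-1}(x)$.

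Finally, I would translate this count from the language of epimorphisms to that of squarefree integers via Example \ref{ex:quadra}: epimorphisms $\varphi : \mathrm{G}_\Qq \to \Zz/2\Zz$ are in bijection with squarefree integers $d$, the associated twisted curve is $\widetilde{X}^\varphi = C_{d \cdot P(T)}$, and the fixed field of $\ker(\varphi)$ is $\Qq(\sqrt{d})$, whose absolute discriminant equals $d$ or $4d$. In particular, the condition $\varphi \in \overline{\mathcal{S}}(\Zz/2\Zz, x)$ forces $|d| \leq x$, so each such $\varphi$ contributes a distinct squarefree integer in $\llbracket -x, x \rrbracket$ with the desired failure of the Hasse principle, delivering the claimed lower bound. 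No genuine obstacle arises in the argument; the only point requiring any bookkeeping is the discriminant-versus-$|d|$ comparison in the last step, which is entirely routine.
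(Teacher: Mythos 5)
Your proposal is correct and follows essentially the same route as the paper, which itself states only that Corollary \ref{coro:CW18} follows from Corollary \ref{thm:hasse_exp} in the same way that Corollary \ref{coro:granville} follows from Theorem \ref{thm:abc_spec2}; you have simply made that analogy explicit (branch points via Proposition \ref{prop:Z/2Z}(a), $\alpha(\Zz/2\Zz)=1$, and the dictionary of Example \ref{ex:quadra} together with the discriminant-versus-$|d|$ comparison). No gaps.
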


\begin{remark}
If $P(T)$ is of odd degree, then the conclusion fails trivially as the trivial point $[0:1:0]$ lies on every quadratic twist of $C_{P(T)}$. This actually gives an example where the Hasse principle holds but our local-global principle fails.

Namely, consider a separable polynomial $P(T) \in \Zz[T]$ of odd degree. Then $C_{P(T)}: y^2=P(t)$ has a non-trivial $\Qq_p$-rational point for every prime $p$ (an easy consequence of Hensel's lemma). Consequently, if $d$ denotes an arbitrary squarefree integer, then the twisted hyperelliptic curve $C_{d \cdot P(T)} : y^2 = d \cdot P(t)$ has a non-trivial $\Qq_p$-rational point for every prime $p$. However, by Corollary \ref{coro:granville}, if $P(T)$ has degree at least 7 and the abc-conjecture holds, then $C_{d \cdot P(T)}$ has only trivial $\Qq$-rational points for almost all squarefree integers $d$. Note that this last conclusion does hold unconditionally for infinitely many squarefree integers $d$ in some situations (see, e.g., the upcoming Proposition \ref{prop:hyper_uncond}).

Even though the above situation seems like an artificial creation of a failure of Hasse principle (by disallowing trivial points), it is important from our point of view of specializations, since it yields a case of a regular $\Qq$-$G$-cover $f : X \rightarrow \mathbb{P}^1$ where {\it{every}} epimorphism $\varphi : {\rm{G}}_\Qq \rightarrow G$ is a specialization morphism of $f$ everywhere locally, but not all such $\varphi$'s occur as specialization morphisms of $f$. In particular, it provides a conditional example where the ratio \eqref{eq:ratio_2} tends to $0$, whereas the ratio in \eqref{eq:ratio} does not.
\end{remark}

In fact, Corollary \ref{thm:hasse_exp} holds if $f$ an arbitrary regular $\Qq$-$G$-cover of $\mathbb{P}^1$ with 8 branch points or more, none of them is $\Qq$-rational, and such that some geometric inertia group contains a non-trivial central element\footnote{at the cost of replacing $\alpha(G)$ by the smaller constant $\beta$ defined in \eqref{eq:beta}.}. In particular, this corollary, which relies on  \S\ref{sssec:hasse_regular}, allows to conditionally generate many more curves over $\Qq$ failing the Hasse principle:

\begin{corollary} \label{coro:hasse_existence}
Let $G$ be a regular Galois group over $\Qq$ with non-trivial center. Assume the abc-conjecture holds. Then there exist a curve $C$ over $\Qq$, with a regular $\Qq$-$G$-cover to $\mathbb{P}^1$, and ``many" $\Qq$-curves $C'$, which are isomorphic to $C$ up to base change from $\Qq$ to $\overline{\Qq}$ and which do not fulfill the Hasse principle.
\end{corollary}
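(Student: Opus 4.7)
The plan is to combine the construction from \S\ref{sssec:hasse_regular} with the diophantine reformulation given just before the statement of Corollary \ref{thm:hasse_exp}, applied in the more general setting where only the center of $G$ (not the whole group) is required to be abelian. First, I would take $G$ to be an arbitrary regular Galois group over $\Qq$ with non-trivial center and invoke the construction in \S\ref{sssec:hasse_regular} to produce a regular $\Qq$-$G$-cover $f'' : X'' \to \mathbb{P}^1$ such that (i) the inertia group at some branch point of $f''$ contains an element of prime order lying in the center of $G$, (ii) no branch point of $f''$ is $\Qq$-rational, and (iii) the number of branch points of $f''$ is at least $8$. As explicitly noted at the end of \S\ref{sssec:hasse_regular}, conditions (ii) and (iii) can be arranged simultaneously: one replaces the initial cover $f$ by a suitable pullback to remove $\Qq$-rational branch points, and one uses the rigidity-method freedom on $f'$ to make the total number of branch points as large as desired.

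Next, I would apply Theorem \ref{thm:hasse_ab}(b) to $f''$. Since $f''$ has at least $8$ branch points and its inertia group at some branch point contains a non-trivial central element, the hypotheses of that theorem are satisfied, and the abc-conjecture is assumed. We therefore obtain, for every sufficiently large $x$, at least $C'(f'') \cdot x^{\beta} \cdot \log^{-1}(x)$ epimorphisms $\varphi \in \overline{\mathcal{S}}(G,x)$ such that $(f'', \varphi)$ fails the local-global principle for specializations, where $\beta$ is the positive constant defined in \eqref{eq:beta}.

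The translation to the Hasse principle is then immediate, following the recipe used to derive Corollary \ref{thm:hasse_exp} from Theorem \ref{thm:hasse_ab}. By the twisting lemma (Proposition \ref{tl}) and the discussion in \S\ref{ssec:diophantine_local}, the condition $\varphi \in {\rm{Sp}}(f'')^{\rm{loc}} \setminus {\rm{Sp}}(f'')$ means exactly that the twisted curve $\widetilde{X''}^\varphi$ has a non-trivial $\Qq_p$-rational point for every prime $p$ (including $p = \infty$) but has only trivial $\Qq$-rational points. Since no branch point of $f''$ is $\Qq$-rational, and since $f''$ and $\widetilde{f''}^\varphi$ have the same branch points (defined over $\overline{\Qq}$ but not over $\Qq$), a ``trivial'' $\Qq$-rational point in the sense of Definition \ref{def:trivial} cannot exist on $\widetilde{X''}^\varphi$ at all. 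Consequently, $\widetilde{X''}^\varphi$ has local points everywhere but no $\Qq$-rational point, i.e.\ it genuinely fails the Hasse principle. Setting $C := X''$, each such curve $C' := \widetilde{X''}^\varphi$ is isomorphic to $C$ over $\overline{\Qq}$ since $\widetilde{f''}^\varphi \otimes_\Qq \overline{\Qq} = f'' \otimes_\Qq \overline{\Qq}$, as recalled in \S\ref{ssec:diophantine_prelim}.

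The main obstacle, or rather the only real point to verify carefully, is the consistent matching of the constructions in \S\ref{sssec:hasse_regular} with the hypotheses of Theorem \ref{thm:hasse_ab}: we need to ensure that the central element we exhibit indeed sits inside the inertia group at a branch point of $f''$ (and not merely inside $G$ abstractly), and that the pullback procedure removing $\Qq$-rational branch points does not destroy this property. Both are guaranteed by the explicit recipe at the end of \S\ref{sssec:hasse_regular}: the inertia group of $E''/\Qq(T)$ at each branch point of $E'/\Qq(T)$ is $\langle g\rangle$, and a change of variable $T \mapsto aT+b$ with suitable $a, b \in \Qq$ moves all branch points off $\mathbb{P}^1(\Qq)$ without affecting inertia groups or the total number of branch points. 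The quantitative output recorded in the conclusion follows with $\beta$ in place of $\alpha(G)$, precisely as in the statement of Theorem \ref{thm:hasse_ab}(b).
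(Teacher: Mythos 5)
Your proposal is correct and follows essentially the same route as the paper, which obtains this corollary by feeding the cover $f''$ constructed in \S\ref{sssec:hasse_regular} (at least $8$ branch points, none $\Qq$-rational, a central element of prime order in some inertia group) into the generalized form of Corollary \ref{thm:hasse_exp}, i.e.\ Theorem \ref{thm:hasse_ab}(b), and then translating the failure of the local-global principle for specializations into a failure of the Hasse principle via the twisting dictionary of \S\ref{ssec:diophantine_local}. One small slip in your final paragraph: a change of variable $T \mapsto aT+b$ with $a,b \in \Qq$ preserves the $\Qq$-rationality of branch points, so it cannot ``move all branch points off $\mathbb{P}^1(\Qq)$''; the correct mechanism --- which you already invoked correctly in your first paragraph --- is to replace $f$ by a suitable pullback and to choose $f'$ via the rigidity method so that it has no $\Qq$-rational branch points and arbitrarily many branch points.
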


Note that our arguments indeed yield infinitely many pairwise non-isomorphic (over $\mathbb{Q}$) such curves $C'$. This is because isomorphic curves over $\mathbb{Q}$ have isomorphic function fields, whereas it is easy to see that twists $\widetilde{f}^{\varphi_1}$ and $\widetilde{f}^{\varphi_2}$ of the same cover $f$ have non-isomorphic function fields as soon as the kernels of $\varphi_1$ and $\varphi_2$ have distinct fixed fields.

\appendix

\section{Parametric extensions with few branch points} \label{sec:parametric}

The aim of this section is to use various tools from previous papers to prove the following conditional result about parametric extensions with at most three branch points:

\begin{theorem} \label{prop:r_small}
Let $G$ be a non-trivial finite group and let $E/\Qq(T)$ be a regular $G$-extension with $r \leq 3$ branch points.

\vspace{0.25mm}

\noindent
{\rm{(a)}} Suppose $r=2$. Then the following three conditions are equivalent:

\vspace{0.25mm}

{\rm{(1)}} the extension $E/\Qq(T)$ is generic,

\vspace{0.25mm}

{\rm{(2)}} the extension $E/\Qq(T)$ is parametric,

\vspace{0.25mm}

{\rm{(3)}} either $E=\Qq(\sqrt{T})$, up to a applying a change of variable (that is, $G=\Zz/2\Zz$ and each

branch point of $E/\Qq(T)$ is $\Qq$-rational), or $G=\Zz/3\Zz$.

\vspace{0.25mm}

\noindent
{\rm{(b)}} Suppose all finite groups occur as Galois groups over $\Qq$ and $r=3$. Then the following three conditions are equivalent:

\vspace{0.25mm}

{\rm{(1)}} the extension $E/\Qq(T)$ is generic,

\vspace{0.25mm}

{\rm{(2)}} the extension $E/\Qq(T)$ is parametric,

\vspace{0.25mm}

{\rm{(3)}} the field $E$ is equal to the splitting field over $\Qq(T)$ of the polynomial $Y^3+TY+T$ (in

which case $G=S_3$), up to applying a change of variable.
\end{theorem}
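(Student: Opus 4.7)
The plan is to prove, in each of parts (a) and (b), the chain (1) $\Rightarrow$ (2) $\Rightarrow$ (3) $\Rightarrow$ (1), noting that (1) $\Rightarrow$ (2) is immediate from the definition of genericity.

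For part (a), the starting observation is that a regular $G$-extension of $\Qq(T)$ with $r=2$ branch points has geometric monodromy generated by a pair $(g_1, g_2)$ satisfying $g_1 g_2 = 1$, so that $g_2 = g_1^{-1}$ and $G$ is forced to be cyclic, say $G = \Zz/n\Zz$. The Branch Cycle Lemma then pins down the field of definition of $\{t_1, t_2\}$: either both branch points are $\Qq$-rational---in which case the compatibility $g_1 = g_1^{-1}$ forces $n=2$ and a M\"obius change of variable moves them to $\{0, \infty\}$, producing $E = \Qq(T)(\sqrt{T})$---or $\{t_1, t_2\}$ form an algebraic conjugate pair over the fixed field of the kernel of the cyclotomic character modulo $n$. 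For (3) $\Rightarrow$ (1), one verifies genericity of the two listed families directly: for $E=\Qq(T)(\sqrt{T})$ the quadratic extension $k'(\sqrt{d})/k'$ of any overfield $k' \supseteq \Qq$ arises from the specialization $T \mapsto d$; for the $\Zz/3\Zz$ case, the Branch Cycle Lemma forces a one-parameter normal form from which one verifies by direct Kummer-theoretic descent that every cyclic cubic extension of every overfield arises as a specialization.

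The substantial content of (a) is (2) $\Rightarrow$ (3), where one must rule out $n=2$ with non-$\Qq$-rational branch points, as well as all $n \geq 4$. In the former case, an explicit change of variable normalizes $E$ into the form $\Qq(T)(\sqrt{T^2 - d})$, so that specializations $t_0 \mapsto \alpha$ yield only the quadratic extensions $\Qq(\sqrt{\alpha^2 - d})$, a thin family missing, for instance, $\Qq(\sqrt{p})$ for almost all primes $p$. For $n \geq 4$, the main lever is Theorem \ref{beckmann}: specializations of a fixed $2$-branch-point cyclic cover can ramify only at primes appearing in the values $P_i(u,v)$ for coprime integers $u,v$, and for $n \geq 4$ one invokes a Grunwald--Wang type argument, or a direct construction, to produce $\Zz/n\Zz$-extensions of $\Qq$ ramifying outside any prescribed finite set of primes. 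For part (b), the direction (3) $\Rightarrow$ (1) is the classical fact that $Y^3 + TY + T$ defines a generic $S_3$-extension over every field of characteristic zero; and (2) $\Rightarrow$ (3) is obtained by combining the non-parametricity results of \cite{KL18} and \cite[\S7]{KLN19}, which under the hypothesis that every finite group occurs as a Galois group over $\Qq$ rule out all but the group $S_3$ from admitting a parametric $r=3$ extension, followed by a Branch Cycle Lemma analysis of the admissible ramification types for $S_3$-covers with three branch points to classify the cover up to change of variable.

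The main obstacle is expected to be (2) $\Rightarrow$ (3) in part (a) for $n \geq 4$, where one must exhibit, uniformly in the given $2$-branch-point extension, a $\Zz/n\Zz$-extension of $\Qq$ outside its specialization set; here a Grunwald--Wang flavoured arithmetic rigidity argument, respecting the Beckmann ramification bookkeeping, seems unavoidable. A secondary difficulty is that (3) $\Rightarrow$ (1) in the $\Zz/3\Zz$ case of part (a) demands genericity rather than just parametricity, which forces the Kummer descent to be carried out over an arbitrary overfield $k' \supseteq \Qq$ rather than over $\Qq$ alone.
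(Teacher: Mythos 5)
Your overall skeleton for part (a) (Riemann existence forces $G$ cyclic, Branch Cycle Lemma analysis of the branch points, genericity of $\Qq(\sqrt{T})$ and of the $\Zz/3\Zz$ family) matches the paper, but the step ruling out $n\geq 4$ is wrong as stated. You propose to note that specializations ramify only at primes dividing values $P_i(u,v)$ and then to ``produce $\Zz/n\Zz$-extensions of $\Qq$ ramifying outside any prescribed finite set of primes.'' The set of primes dividing some value $P_i(u,v)$ is not finite --- it is the set of prime divisors of the minimal polynomial of the branch points, which has positive density --- so avoiding a finite set buys nothing. The argument must run the other way: since the Branch Cycle Lemma forces $n\in\{2,3,4,6\}$ and, for $n\in\{4,6\}$, forces the two branch points to be algebraically conjugate and non-$\Qq$-rational, Lemma \ref{lemma_2} (Chebotarev applied to the splitting field of the branch points) produces a \emph{positive-density} set of primes that divide no value $P_i(u,v)$ and hence ramify in no specialization; one then exhibits a $\Zz/n\Zz$-extension of $\Qq$ ramified at such a prime. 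Your claim that the $n=2$, non-rational-branch-points case misses $\Qq(\sqrt{p})$ for many $p$ is correct but needs the local solvability analysis of the conic $t^2-d=ps^2$ (the paper instead cites \cite[Proposition 3.1]{Leg15}).

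Part (b) has a more serious gap: the non-parametricity results of \cite{KL18} and \cite[\S 7]{KLN19} do \emph{not} rule out all $2$-generated groups other than $S_3$ --- the paper itself notes in Remark \ref{rk:abc_explicit}(b) that those methods fail for groups of prime order, and regular $\Zz/3\Zz$-extensions of $\Qq(T)$ with three branch points exist and must be excluded. The paper's actual mechanism, and the only place the inverse-Galois hypothesis enters, is: by \cite[Proposition 1]{KM01} there is a totally real $G$-extension of $\Qq$, and by \cite[Proposition 1.2]{DF90} a totally real extension cannot be a specialization of a three-branch-point cover unless $G$ is dihedral of order $2n$ with $n\in\{2,3,4,6\}$; the cases $n\in\{2,4,6\}$ (which contain $\Zz/2\Zz\times\Zz/2\Zz$) are then killed by the local obstruction of \cite[Theorem 6.2]{KLN19} together with the local-to-global lifting result of \cite{DLN17}. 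Your $S_3$ endgame also omits a needed step: before rigidity of $(C_2,C_2,C_3)$ pins down the cover, one must show the three branch points are $\Qq$-rational, which the paper does by applying part (a) to the quadratic subextension and using that every quadratic field embeds into an $S_3$-extension.
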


\begin{proof}
(a) First, assume $r=2$. By the Riemann Existence Theorem, one has $G= \Zz/n\Zz$ for some $n \geq 2$. As in the proof of Corollary \ref{coro:abc_explicit}, the Branch Cycle Lemma yields $2=r \geq \varphi(n)$, where $\varphi$ denotes the Euler totient function, that is, $n \in \{2,3,4,6\}$. First, assume $n=2$. Then $E/\Qq(T)$ is parametric if and only if each branch point is $\Qq$-rational \cite[Proposition 3.1]{Leg15}, and it is clear that $E/\Qq(T)$ is generic if the latter holds. Now, assume $n=3$. Then, as a consequence of, e.g., \cite[Proposition 5.3]{DKLN18}, the extension $E/\Qq(T)$ is generic. Finally, assume $n \in \{4,6\}$. Then, by the Branch Cycle Lemma, none of the branch points of $E/\Qq(T)$ is $\Qq$-rational. In particular, there exist infinitely many prime numbers which ramify in no specialization of $E/\Qq(T)$; see Lemma \ref{lemma_2}. However, for all prime numbers $p$, there are $\Zz/n\Zz$-extensions of $\Qq$ which ramify at $p$. Conclude that $E/\Qq(T)$ is not parametric.

\vspace{0.5mm}

\noindent
(b) Now, we suppose $r=3$. As all finite groups have been assumed to be Galois groups over $\Qq$, one may use \cite[Proposition 1]{KM01} to get that there exists a totally real $G$-extension of $\Qq$. By \cite[Proposition 1.2]{DF90}, such a $G$-extension of $\Qq$ cannot occur as a specialization of $E/\Qq(T)$, unless $G$ is dihedral of order $2n$ with $n \in \{2,3,4,6\}$. 

First, assume $G$ is dihedral of order $2n$ with $n \in \{2,4,6\}$. In each case, $G$ has a non-cyclic abelian subgroup (namely, $\Zz/2\Zz \times \Zz/2\Zz$). Then recall that, in this situation, \cite[Theorem 6.2]{KLN19} shows that the extension $E/\Qq(T)$ cannot be ``locally parametric". That is, for infinitely many prime numbers $p$, there exists a finite Galois extension $F_p/\Qq_p$ whose Galois group embeds into $G$ and which does not occur as a specialization of $E\Qq_p/\Qq_p(T)$. Since $G$ is dihedral, up to dropping finitely many such primes, such a Galois extension $F_p/\Qq_p$ can be lifted to a $G$-extension $F/\Qq$, that is, the field $F_p$ is the completion of $F$ at $p$; see \cite[Theorem 1.1]{DLN17}. In particular, the extension $F/\Qq$ cannot occur as a specialization of $E/\Qq(T)$. 

Now, assume $G=S_3$. One easily checks that the ramification indices of the branch points of $E/\Qq(T)$ are 2, 2, and 3, i.e., the inertia groups of the branch points are generated by a 2-cycle, a 2-cycle, and a 3-cycle. Let $C_2$ (resp., $C_3$) be the conjugacy class in $S_3$ of the 2-cycles (resp., of the 3-cycles). If the first two branch points are not $\Qq$-rational, then, by (a), the quadratic subextension of $E/\qq(T)$ is not parametric. Since every quadratic number field embeds into an $S_3$-extension of $\qq$, this implies that $E/\qq(T)$ cannot be parametric either. So all three branch points can be assumed to be $\Qq$-rational. Since $(C_2, C_2, C_3)$ is a rigid triple of rational conjugacy classes of the centerless group $S_3$, there is only one regular $S_3$-extension of $\Qq(T)$ with 3 $\Qq$-rational branch points, up to change of variable. See, e.g., \cite[Chapters 7 and 8]{Ser92} for more details. Let $E'$ be the splitting field of $Y^3+TY+T$ over $\Qq(T)$. Since $E'/\Qq(T)$ is a regular $S_3$-extension and its set of branch points is $\{0, \infty, -27/4\}$, it is the only regular $S_3$-extension of $\Qq(T)$ with 3 $\Qq$-rational branch points. As this extension is known to be generic (see, e.g., \cite[\S2.1]{JLY02} or \cite[Proposition 5.3]{DKLN18}), we are done. 
\end{proof}

\begin{remark}
(a) We do not know whether the equivalence between ``$E/\Qq(T)$ parametric" and ``$E/\Qq(T)$ generic" holds without assuming the number of branch points is at most 3 and every finite group occurs as a Galois group over $\Qq$ \footnote{Over larger number fields $k$, examples of regular $G$-extensions of $k(T)$ which are parametric but not generic are known, under the Birch and Swinnerton-Dyer conjecture. See \cite[\S5.4]{DKLN18} for more details.}. Note that this result would imply that only the subgroups of $S_3$ have a parametric extension over $\Qq$, since this last conclusion holds with ``parametric" replaced by ``generic"; see \cite{JLY02} and \cite[Corollary 5.4]{DKLN18}.

\vspace{0.5mm}

\noindent
(b) Given a finite group $G$, every regular $G$-extension of $\Qq(T)$ of genus 0 has at most 3 branch points (by the Riemann-Hurwitz formula). Hence, Theorem \ref{prop:r_small} shows that, under a positive answer to the inverse Galois problem, any given regular $G$-extension $E/\Qq(T)$ of genus 0 which is parametric is generic. This weaker conclusion actually holds unconditionally.

Indeed, denote the number of branch points of $E/\Qq(T)$ by $r$. Since $E/\Qq(T)$ has genus 0, one of these conditions holds:

(1) $G$ is cyclic of order $n \in \{2,3,4,6\}$ and $r=2$,

(2) $G$ is dihedral of order $2n$ with $n \in \{2,3,4,6\}$ and $r=3$,

(3) $G=A_4$ and $r=3$,

(4) $G=S_4$ and $r=3$ \footnote{Indeed, since $E/\Qq(T)$ is of genus 0, the group $G$ embeds into ${\rm{PGL}}_2(\overline{\Qq})$ and, by \cite[Chapter I, Theorem 6.2]{MM18}, we get that one of the following five conditions holds: (1) $G$ is cyclic and $r=2$, (2) $G$ is dihedral and $r=3$, (3) $G=A_4$ and $r=3$, (4) $G=S_4$ and $r=3$, and (5) $G=A_5$ and $r=3$. First, as in the proof of Theorem \ref{prop:r_small}(a), (1) can happen only if $n \in \{2,3,4,6\}$, by the Branch Cycle Lemma. Now, (5) cannot happen. Indeed, the ramification indices of the branch points of $E/\Qq(T)$ should be 2, 3, and 5 (see \cite[Chapter I, Theorem 6.2]{MM18}), thus violating the Branch Cycle Lemma since $A_5$ has two conjugate conjugacy classes of 5-cycles. Finally, in the case of dihedral groups, similar arguments show that (2) can happen only if $n \in \{2,3,4,6\}$.}.

\noindent
If (1) holds, then $E/\Qq(T)$ is generic iff it is parametric, by Theorem \ref{prop:r_small}(a). If (2) (with $n \not=3$) or (3) or (4) holds, then $G$ has a non-cyclic abelian subgroup. One then shows as in the proof of Theorem \ref{prop:r_small}(b) that $E/\Qq(T)$ is not parametric. Finally, if (2) holds with $n=3$, then one sees as above that $E/\Qq(T)$ is non-parametric or $E$ is the splitting field over $\Qq(T)$ of $Y^3+TY+T$, up to change of variable.
\end{remark}

\section{Twists of superelliptic curves without rational points} \label{app:proof}

\subsection{Proof of Theorem \ref{thm:super_uncond}} \label{app:proof_1}

Let $S'$ be the subset of $\mathcal{P}(n,N)$ consisting of all polynomials $P(T)$ satisfying this condition:

\vspace{1mm}

\noindent
{\rm{($*$)}} {\it{$P(T)$ is separable and $\bigcup_{j=1}^N {\rm{Gal}}(L/\Qq(t_j)) \not= {\rm{Gal}}(L/\Qq)$, where $t_1, \dots, t_N$ and $L$ are the roots and the splitting field over $\Qq$ of $P(T)$, respectively.}}

\vspace{1mm}

First, an element $P(T)$ of $\mathcal{P}(n,N)$ is in $S'$ if its Galois group over $\Qq$, viewed as a permutation group of the roots, is isomorphic to $S_N$. One then shows as in the proof of Lemma \ref{lemma_0.5} that the estimate \eqref{eq:bound0} holds. Moreover, if $P(T) \in S'$, then, as in the proof of Lemma \ref{lemma_2}, there is a set $\mathcal{S}$ of prime numbers of positive density $\alpha$ such that no prime number $p \in \mathcal{S}$ is a prime divisor of $P(T)$ \footnote{The definition of a prime divisor of a polynomial is recalled in the proof of Lemma \ref{lemma_2}.}. Set $P(T) = a_0 + a_1 T + \cdots + a_N T^N.$ As condition ($*$) holds, $P(T)$ has no root in $\Qq$. In particular, one has $a_0 \not=0$. Up to dropping finitely many prime numbers, we may assume $v_p(a_0) = 0$ and $v_p(a_N)=0$ for each prime number $p \in \mathcal{S}$. 

Next, let $d$ be an arbitrary $n$-free number which is divisible by at least one prime number $p \in \mathcal{S}$. Suppose $C_{d \cdot P(T)}$ has a (non-trivial) $\Qq$-rational point $[y:t:z]$. If $z=0$, one has
\begin{equation} \label{eq-1}
y^n = d \cdot a_N t^N.
\end{equation}
In particular, one has $y \not=0$ and $t \not=0$. By the condition $v_p(a_N)=0$ and \eqref{eq-1}, one has $$n \cdot v_p(y) = v_p(d) + N \cdot v_p(t).$$
As $n$ divides $N$, we get that $n$ divides $v_p(d)$, which cannot happen. One then has $z \not=0$. Up to replacing $(y,t,z)$ by $(y/z^{N/n}, t/z, 1)$, we may assume $z=1$. Hence, one has
\begin{equation} \label{eq 0}
y^n = d \cdot P(t).
\end{equation}
 If $v_p(P(t)) = 0$, \eqref{eq 0} gives $n \cdot v_p(y) = v_p(d)$. Then $n \vert v_p(d)$, which cannot happen. Hence,
\begin{equation} \label{eq -1-}
v_p(P(t)) \not=0.
\end{equation}
If $t=0$, \eqref{eq 0} gives $y^n = d \cdot a_0$. Since $v_p(a_0) = 0$, we get $n \vert v_p(d)$, a contradiction. Hence, $t \not=0$. If $v_p(t) \geq 0$, then $v_p(P(t)) \geq 0$. By \eqref{eq -1-}, this yields $v_p(P(t)) >0$. Then $p$ is a prime divisor of $P(T)$, a contradiction. Hence, $v_p(t)<0$. Using that $v_p(a_N)=0$, we get 
\begin{equation} \label{eq 2}
v_p(P(t)) = v_p(t^N) = N \cdot v_p(t).
\end{equation}
Combining \eqref{eq 0} and \eqref{eq 2} then provides
$n \cdot v_p(y) = v_p(d) +N \cdot v_p(t).$
As $n | N$, we get that $n$ divides $v_p(d)$, which cannot happen. One then has $C_{d \cdot P(T)}(\Qq) = \emptyset$.

Finally, let $\mathcal{N}_\mathcal{S}$ be the set of all integers $d$ which are divisible by no prime number in $\mathcal{S}$. By the above, one has $|\mathcal{N}_{n}(P(T),x)| \leq |\mathcal{N}_{\mathcal{S}} \cap \llbracket -x,x \rrbracket|$ for every positive integer $x$. Moreover, by \cite[th\'eor\`eme 2.3]{Ser76}, one has
$|\mathcal{N}_{\mathcal{S}} \cap \llbracket -x,x \rrbracket| \sim \beta \cdot {x} \cdot {{\log}^{-\alpha}(x)}$ as $x$ tends to $\infty$ (for some constant $\beta > 0$). Conclude that \eqref{eq:bound} and the desired density zero conclusion hold (as $\mathcal{N}_n$ has positive density), thus ending the proof of Theorem \ref{thm:super_uncond}.

\subsection{Variants of Theorem \ref{thm:super_uncond}} \label{app:proof_2}

As before, we refer to \S\ref{ssec:basics_0} and \S\ref{ssec:basics_2}.3 for the definitions of the sets $\mathcal{P}(n,N)$, $\mathcal{P}(n,N, H)$, $\mathcal{N}_n$, $\mathcal{N}_n(P(T))$, $\mathcal{N}_n(x)$, and $\mathcal{N}_n(P(T), x)$.

\begin{proposition} \label{prop:super_uncond}
Let $n$ and $N$ be integers such that $n \geq 2$, $n$ is not a prime number, and $N \geq 5$. Let $P(T)$ be a separable polynomial in $\mathcal{P}(n,N)$ and let $n_1$ be the smallest prime divisor of $n$. Then there exists a positive constant $c$ such that 
\begin{equation} \label{eq:pnt}
|\mathcal{N}_n(x)| - |\mathcal{N}_n(P(T), x)| \geq c \cdot x^{1/n_1}, \quad x \rightarrow \infty.
\end{equation}
\end{proposition}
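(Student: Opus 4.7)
The plan is to exhibit at least $c \cdot x^{1/n_1}$ many $n$-free integers $d$ with $|d| \leq x$ such that $C_{d \cdot P(T)}$ has no non-trivial $\Qq$-rational point, by restricting attention to $d$ of the special shape $d = m^{n_1}$ with $m$ a nonzero squarefree integer. Since $v_p(d) = n_1 \cdot v_p(m) \leq n_1 < n$ for every prime $p$ (using the assumption that $n$ is not prime, which forces $n_1 < n$), each such $d$ lies in $\mathcal{N}_n$; and the number of squarefree $m$ with $|m|^{n_1} \leq x$ is asymptotic to $(12/\pi^2) \cdot x^{1/n_1}$. It will therefore suffice to show that all but boundedly many (depending on $P$) such $m$ yield $d \notin \mathcal{N}_n(P(T))$.

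Set $k := n/n_1$, so that $k \geq 2$. The key reduction is the following. If $(y, t) \in \Qq^* \times \Qq$ satisfies $y^n = m^{n_1} P(t)$, then $P(t) = (y^k / m)^{n_1}$ is a nonzero $n_1$-th power in $\Qq^*$, say $P(t) = s^{n_1}$ with $s \in \Qq^*$, so that $(t, s)$ is a $\Qq$-rational point of the affine superelliptic curve $Y^{n_1} = P(T)$; moreover, from $(y^k)^{n_1} = (m s)^{n_1}$ we deduce $y^k = \zeta \cdot m s$ for some $n_1$-th root of unity $\zeta \in \Qq^*$, and since such roots of unity are contained in $\{\pm 1\}$, this means $\pm m s \in (\Qq^*)^k$. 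By Riemann-Hurwitz, the smooth projective model of $Y^{n_1} = P(T)$ has genus at least $2$ for $n_1 \geq 2$ and $N \geq 5$, so Faltings' theorem gives only finitely many $\Qq$-rational points on it; in particular, only finitely many possibilities $(t_j, s_j)$ arise from the affine part.

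For each fixed $s_j$, the condition $\pm m s_j \in (\Qq^*)^k$ translates into the congruences $v_p(m) \equiv -v_p(s_j) \pmod{k}$ for every prime $p$. Because $m$ is squarefree, $v_p(m) \in \{0,1\}$; since $k \geq 2$, every prime $p$ outside the (finite) support of $s_j$ is then forced to satisfy $v_p(m) = 0$. Thus $m$ is uniquely determined up to sign by $s_j$, contributing $O(1)$ exceptional squarefree $m$ per $j$. Summing over the finitely many $s_j$, and separately treating the potential non-trivial points at infinity on $C_{d \cdot P(T)}$ (which can occur only when $n_1 \mid N$ and $a_N$ is an $n_1$-th power in $\Qq$, and are handled by an entirely analogous divisibility argument on $m$), one obtains a finite exceptional set whose cardinality depends only on $P$.

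The main obstacle I expect is the case-by-case bookkeeping: the sign ambiguities coming from $\mu_{n_1}(\Qq) \subseteq \{\pm 1\}$, the different possibilities for the ramification of $Y^{n_1} = P(T)$ at $\infty$ according to $\gcd(n_1, N)$, and the at-most-two-to-one map $m \mapsto m^{n_1}$ (when $n_1$ is even). Conceptually, however, the argument is clean: subtracting the bounded exceptional set from the $\sim (12/\pi^2) \cdot x^{1/n_1}$ squarefree $m$'s with $|m| \leq x^{1/n_1}$ and accounting for this multiplicity yields the desired lower bound $|\mathcal{N}_n(x)| - |\mathcal{N}_n(P(T), x)| \geq c \cdot x^{1/n_1}$ for a suitable positive constant $c$ and all sufficiently large $x$.
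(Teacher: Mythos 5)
Your proof is correct and follows essentially the same route as the paper's: both restrict to twists by (near-)perfect $n_1$-th powers of squarefree integers, transfer a non-trivial rational point on the twist to a rational point on a fixed auxiliary superelliptic curve $Y^{n_1}=(\mathrm{const})\cdot P(T)$ of genus at least $2$, invoke Faltings, and then bound the number of squarefree $m$ lying over each auxiliary point (the paper phrases this last step as injectivity of $\alpha\mapsto[y_\alpha^{n_2}/\alpha:t_\alpha:z_\alpha]$, you phrase it via the valuations of $s_j$; the two are the same computation). The only cosmetic difference is the paper's choice $d_\alpha=2\alpha^{n_1}$ versus your $d=m^{n_1}$.
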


\begin{proof}
For $\alpha \in \mathcal{N}_2$, consider the $n$-free integer $d_\alpha = 2 \alpha^{n_1}$ (note that $n_1 \not \in \{n-1,n\}$ as $n$ is not a prime and $n_1 \, | \, n$). We show below that there are only finitely many squarefree integers
$\alpha$ such that the twisted superelliptic curve $C_{d_\alpha \cdot P(T)}: y^n = d_\alpha \cdot P(t)$ has a non-trivial $\Qq$-rational point, thus providing \eqref{eq:pnt}.

Set $n=n_1 n_2$. Given $\alpha \in \mathcal{N}_2$, let $[y_\alpha:t_\alpha:z_\alpha]$ be a non-trivial $\Qq$-rational point on $C_{d_\alpha \cdot P(T)}$. If $z_\alpha \not=0$, then $[y_\alpha : t_\alpha : z_\alpha]=[y_\alpha' :t_\alpha/z_\alpha :1]$, where $y_\alpha'$ is $y_\alpha$ divided by some power of $z_\alpha$. One may then assume $z_\alpha=0$ or $z_\alpha=1$. In each case, one sees that $[y_\alpha^{n_2}/\alpha : t_\alpha : z_\alpha]$ is a non-trivial $\Qq$-rational point on $C_{2 \cdot P(T)}:y^{n_1}= 2 \cdot P(t)$.

Now, given $\alpha \not= \beta \in \mathcal{N}_2$, suppose $[y_\alpha^{n_2}/\alpha : t_\alpha : z_\alpha] = [y_\beta^{n_2}/\beta : t_\beta : z_\beta]$. First, if $z_\alpha = z_\beta = 0$ (which implies that $n$ divides $N$), then $y_\alpha^{n_2}/\alpha = \lambda^{N/n_1} y_\beta^{n_2}/\beta$ for some $\lambda \in \Qq \setminus \{0\}$. Since $n_2$ divides $N/n_1$, this implies that $\alpha / \beta \not=1$ is a $n_2$th power in $\Qq$, which cannot happen. Now, if $z_\alpha = z_\beta=1$, then $y_\alpha^{n_2}/\alpha = y_\beta^{n_2}/\beta$ and one gets a contradiction as in the first case.

Hence, if $C_{d_\alpha \cdot P(T)}$ has a non-trivial $\Qq$-rational point for infinitely many $\alpha \in \mathcal{N}_2$, then $|C_{2 \cdot P(T)}(\Qq)| = \infty$. However, due to our assumptions that $P(T)$ is separable and $N \geq 5$, this superelliptic curve has genus at least 2 and Faltings' theorem then yields a contradiction.
\end{proof}

\begin{proposition} \label{prop:hyper_uncond}
Let $N$ be a positive integer such that $N \equiv 3 \pmod 4$. Then there exists a subset $S$ of $\mathcal{P}(2,N)$ which satisfies the following two conclusions.

\vspace{0.5mm}

\noindent
{\rm{(a)}} One has
\begin{equation} \label{eq:bound_again}
\frac{|{S} \cap \mathcal{P}(2,N,H)|}{|\mathcal{P}(2,N,H)|} = 1 - O \bigg(\frac{{\log}(H)}{\sqrt{H}} \bigg), \quad H \rightarrow \infty.
\end{equation}
In particular, the set ${S}$ has density 1.

\vspace{0.5mm}

\noindent
{\rm{(b)}} The complement $\mathcal{N}_2 \setminus \mathcal{N}_2(P(T))$ is infinite for every polynomial $P(T) \in S$.
\end{proposition}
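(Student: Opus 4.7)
For part (a), the plan is to take $S$ to consist of those $P(T) \in \mathcal{P}(2,N)$ that are separable and satisfy condition $(*)$ from \S\ref{app:proof_1}; equivalently, the action of $\mathrm{G}_\qq$ on the roots of $P(T)$ has no fixed point. Density 1 of $S$ inside $\mathcal{P}(2,N)$ will follow from the quantitative Hilbert irreducibility theorem \cite[Theorem 2.1]{Coh81b} applied exactly as in the proof of Lemma \ref{lemma_0.5}, yielding the estimate \eqref{eq:bound_again}.

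For part (b), fix $P(T) \in S$ and apply the Chebotarev argument of Lemma \ref{lemma_2} to the homogenization $P^*(U,V) = V^N P(U/V)$: condition $(*)$ produces a positive-density set $\mathcal{S}^*$ of primes such that $P^*(a,b) \not\equiv 0 \pmod{p}$ for every $p \in \mathcal{S}^*$ and every coprime pair $(a,b) \in \Zz^2$. The goal is to show that, for infinitely many primes $p \in \mathcal{S}^*$, at least one of $\pm p$ lies in $\mathcal{N}_2 \setminus \mathcal{N}_2(P(T))$. Assuming $d = \pm p \in \mathcal{N}_2(P(T))$, there are integers $y, a, b$ with $\gcd(a,b) = 1$ and $y^2 = d \cdot P(a/b)$, i.e., $y^2 b^N = d \cdot P^*(a,b)$. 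A $p$-adic valuation analysis parallel to the one in the proof of Theorem \ref{thm:super_uncond} forces $v_p(b)$ to be odd (in particular $p \mid b$), and the remaining unit constraint modulo $p$ reduces to a quadratic residue equation involving $d/p$, the leading coefficient $a_N$, and the reductions of $a$ and $b/p^{v_p(b)}$.

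This is where the hypothesis $N \equiv 3 \pmod{4}$ enters. It makes $(N+1)/2$ even, which fixes the parity of the power of $p$ dividing the leading term $a_N a^N / b^N$ in the $p$-adic expansion. Restricting further by Chebotarev to primes $p \in \mathcal{S}^*$ satisfying $p \equiv 3 \pmod{4}$ (so that $-1$ is a non-square modulo $p$) --- still a positive density set --- the plan is to arrange that the combined residue condition rules out one of the two sign choices $d = \pm p$ at $p$, and that the Archimedean place (or a congruence at $p = 2$) rules out the other. This would produce infinitely many $d \in \mathcal{N}_2 \setminus \mathcal{N}_2(P(T))$.

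The main obstacle will be to rigorously extract the ``parity versus quadratic residue'' incompatibility described above: one must combine the forced odd parity of $v_p(b)$ with the unit-part computation at $p$, and carefully track the analogous constraints at $p = 2$ and at $\infty$, to produce a genuine obstruction for at least one sign of $d = \pm p$, uniformly in the rational point $(y, a, b)$. It is exactly this bookkeeping that distinguishes $N \equiv 3 \pmod{4}$ from $N \equiv 1 \pmod{4}$ and justifies the asymmetry in the hypotheses of the proposition.
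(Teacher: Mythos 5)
Your part (a) is fine: restricting to polynomials with large Galois group and invoking \cite[Theorem 2.1]{Coh81b} as in Lemma \ref{lemma_0.5} does give the estimate \eqref{eq:bound_again}. But your plan for part (b) cannot be completed, and the obstacle you flag at the end is not a bookkeeping issue but a genuine impossibility. Since $N$ is odd, for \emph{every} squarefree $d$ the twisted curve $C_{d \cdot P(T)}$ has a non-trivial $\Qq_p$-rational point for every prime $p$ (and also non-trivial real points): writing $t=a/b$ with $v_p(b)$ odd, the unit part of $d\cdot P(t)$ is $\epsilon a_N u^N$ times a square, and because $N$ is odd the class of $u^N$ modulo squares runs over \emph{all} unit classes as $u$ varies, so a square value is always attainable and Hensel's lemma applies. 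This is exactly the point made in the Remark following Corollary \ref{coro:CW18}. Consequently no choice of prime $p$, sign of $d=\pm p$, congruence condition on $p$, or constraint at $2$ or $\infty$ can exclude rational points uniformly in $(y,a,b)$: the quadratic residue condition you extract involves the residues of $a$ and of $b/p^{v_p(b)}$ modulo $p$, which are not under your control. The valuation/Chebotarev mechanism of Theorem \ref{thm:super_uncond} genuinely requires $n \mid N$, and your reading of the role of $N\equiv 3 \pmod 4$ (parity of $(N+1)/2$, $-1$ being a non-residue) is not where that hypothesis actually enters.

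The paper's proof is global rather than local. One takes $S$ to be the set of $P(T)\in\mathcal{P}(2,N)$ whose Galois group is $S_N$ or $A_N$ (density $1$ with the stated error term, as in your part (a)). By \cite[Theorem 3]{Yu16} --- and this is precisely where $N\equiv 3\pmod 4$ and the big Galois group are used --- there exist infinitely many $d\in\mathcal{N}_2$ for which the $2$-Selmer group of the Jacobian $J(C_{d\cdot P(T)})$ is trivial. For such $d$ the Mordell--Weil rank is $0$ and $J(C_{d\cdot P(T)})(\Qq)[2]$ is trivial; after discarding finitely many $d$, the torsion subgroup coincides with the $2$-torsion by \cite[Theorem 2.1]{BCS17}, so $J(C_{d\cdot P(T)})(\Qq)$ is trivial and $C_{d\cdot P(T)}(\Qq)$ reduces to the trivial point $[0:1:0]$. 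Any repair of your argument would have to import a global input of this kind; no purely local obstruction exists in odd degree.
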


\begin{proof}
See, e.g., the survey paper \cite{Sto14} for more details on the terminology we use below.

First, given $N \geq 1$ odd and a polynomial $P(T) \in \mathcal{P}(2,N)$, suppose there exists an infinite subset $\mathcal{S}$ of $\mathcal{N}_2$ such that the 2-Selmer group ${\rm{Sel}}_2(J(C_{d \cdot P(T)}))$ of the Jacobian $J(C_{d \cdot P(T)})$ of $C_{d \cdot P(T)}:y^2=d \cdot P(t)$ is trivial for each $d \in \mathcal{S}$. For such a $d$, denote the Mordell-Weil rank of $J(C_{d \cdot P(T)})$ by $r_d$ and the 2-torsion subgroup of $J(C_{d \cdot P(T)})(\Qq)$ by $J(C_{d \cdot P(T)})(\Qq)[2]$. Then
$$r_d \leq {\rm{dim}}_{\mathbb{F}_2} {\rm{Sel}}_2(J(C_{d \cdot P(T)})) - {\rm{dim}}_{\mathbb{F}_2} J(C_{d \cdot P(T)})(\Qq)[2] = - {\rm{dim}}_{\mathbb{F}_2} J(C_{d \cdot P(T)})(\Qq)[2]. $$
See \cite[\S3]{Sto14} for more details. Consequently, one has $r_d = {\rm{dim}}_{\mathbb{F}_2} J(C_{d \cdot P(T)})(\Qq)[2]=0.$
Moreover, up to dropping finitely many elements of $\mathcal{S}$, we may assume that
$$J(C_{d \cdot P(T)})(\Qq)[{\rm{tors}}] = J(C_{d \cdot P(T)})(\Qq)[2]$$
for each $d \in \mathcal{S}$, with $J(C_{d \cdot P(T)})(\Qq)[{\rm{tors}}]$ the set of torsion points of $J(C_{d \cdot P(T)})(\Qq)$. We refer to \cite[Theorem 2.1]{BCS17} for more details. Hence, every $\Qq$-rational point on $J(C_{d \cdot P(T)})$ is of order 1. In particular, for each $d \in \mathcal{S}$, the set $C_{d \cdot P(T)}(\Qq)$ is reduced to $[0:1:0]$.

Now, given $N$ such that $N \equiv 3 \pmod 4$, consider the subset $S$ of $\mathcal{P}(2,N)$ defined by the extra condition that the Galois group over $\Qq$ is $S_N$ or $A_N$. As in the proof of Theorem \ref{thm:super_uncond}, one shows that the set $S$ fulfills \eqref{eq:bound_again}. Moreover, by \cite[Theorem 3]{Yu16}, for every $P(T) \in S$, there exist infinitely many $d \in \mathcal{N}_2$ such that the 2-Selmer group of the Jacobian of $C_{d \cdot P(T)}$ is trivial. It then remains to apply the first part of the proof to conclude.
\end{proof}

\begin{remark}
(a) If $N=3$, one can take $S = \mathcal{P}(2,N)$. Indeed, for $P(T) \in \mathcal{P}(2,3)$, it is known that the Mordell-Weil rank of $C_{d \cdot P(T)}$ is 0 for infinitely many $d \in \mathcal{N}_2$, and that, for all but finitely many $d \in \mathcal{N}_2$, every torsion $\Qq$-rational point on $C_{d \cdot P(T)}$ is trivial. See, e.g., \cite{Dab08} and \cite[Proposition 1]{GM91} for more details and references. Moreover, for some $P(T) \in \mathcal{P}(2,3)$, the density of $\mathcal{N}_2 \setminus \mathcal{N}_2(P(T))$ is known to be positive (unconditionally). 
See \cite{Dab08} for references.

\vspace{1mm}

\noindent
(b) Given $r \geq 2$ even, the density of the subset $\mathcal{E}^{\infty}(r)$ of $\mathcal{E}(r)$ (see \S\ref{ssec:basics_3}), which consists of all regular $\Zz/2\Zz$-extensions of $\Qq(T)$ with exactly $r$ branch points and with $\infty$ as a branch point, is easily seen to be 0 by Proposition \ref{prop:card}. Consequently, elements of $\mathcal{E}^\infty(r)$ which are contained in a set $S$ as in Theorem \ref{thm:Z/2Z_even} are only a negligible part of $S$. However, if $r$ is divisible by 4, Proposition \ref{prop:hyper_uncond} shows that there is a density 1 subset $S$ of $\mathcal{E}^\infty(r)$ such that there exist infinitely many quadratic extensions of $\Qq$ which do not belong to the specialization set of a given extension of $\Qq(T)$ in $S$. The precise statement and the proof, which is very similar to that of Theorem \ref{thm:Z/2Z_even} under Theorem \ref{thm:super_uncond}, are left to the interested reader.
\end{remark}

\bibliography{Biblio2}
\bibliographystyle{alpha}

\end{document}